\numberwithin{equation}{section}
\newtheorem{theorem}{Theorem}[section]
\newtheorem{lemma}[theorem]{Lemma}
\newtheorem{corollary}[theorem]{Corollary}
\theoremstyle{definition}
\newtheorem{remark}[theorem]{Remark}
\newtheorem{definition}[theorem]{Definition}
\newcommand{\conv}{\mathrm{conv}}
\definecolor{aquam}{rgb}{0.5,1.0,1.0}
\definecolor{bbrown}{rgb}{0.75,0.38,0.15}
\definecolor{Cyan}{rgb}{0,0.6,0.6}
\definecolor{Darkblue}{rgb}{0,0,1}
\definecolor{Dodgerblue2}{rgb}{0,0.5,1}
\definecolor{Green}{rgb}{0,0.6,0.06}
\definecolor{Kahki}{rgb}{1,1,0.5}
\definecolor{Magenta}{rgb}{0.7,0,0.7}
\definecolor{bMagenta}{rgb}{1,.6,1}
\definecolor{Orange}{rgb}{0.8,0.3,0}
\definecolor{dOrchid}{rgb}{0.7,0.2,0.4}
\definecolor{Orchid}{rgb}{1,0.5,1}
\definecolor{Purple}{rgb}{0.65,0.07,0.85}
\definecolor{Royalblue}{rgb}{0.6,0.85,0.87}
\definecolor{Tan}{rgb}{0.54,0.42,0.23}
\definecolor{bTan}{rgb}{0.94,0.82,0.63}
\definecolor{zoltan}{rgb}{0,0.1,0.3}
\definecolor{Turquoise}{rgb}{0,0.85,0.87}
\definecolor{Yellow}{rgb}{1,1,0}
\definecolor{darkamber}{rgb}{0.4,0.19,0.28}
\definecolor{bYellow}{rgb}{1,1,0.6}
\definecolor{bRed}{rgb}{1,0.7,0.7}
\definecolor{boxcolb}{rgb}{0.87,0.77,0.75}%rosybrown
\definecolor{boxcol}{rgb}{0.6,0.85,0.87}%cadetblue
\definecolor{boxcolgreen}{rgb}{0.64,0.93,0.79}
\definecolor{boxcolaa}{rgb}{.75,.99,.70}
\definecolor{boxcolbb}{rgb}{0.39,0.50,0.56}
\definecolor{boxcolcc}{rgb}{1,0.81,0.65}
\definecolor{yy}{rgb}{0.43,0.21,.18}
\definecolor{gA}{gray}{0.5}
\definecolor{gB}{gray}{0.8}
\definecolor{gC}{gray}{0.9}
\begin{document}

\title{ Generic H\"older level sets and fractal conductivity }
\author{Zolt\'an Buczolich$^*$}
\address{Department of Analysis, ELTE E\"otv\"os Lor\'and\\
University, P\'azm\'any P\'eter S\'et\'any 1/c, 1117 Budapest, Hungary}
\email{zoltan.buczolich@ttk.elte.hu}
\urladdr{http://buczo.web.elte.hu, ORCID Id: 0000-0001-5481-8797}

\author{Bal\'azs Maga$^\text{\textdagger}$}
\address{Department of Analysis, ELTE E\"otv\"os Lor\'and\\
University, P\'azm\'any P\'eter S\'et\'any 1/c, 1117 Budapest, Hungary}
\email{mbalazs0701@gmail.com}
\urladdr{  http://magab.web.elte.hu/}

\author{ G\'asp\'ar V\'ertesy$^\text{\textdaggerdbl}$}
 \address{ Alfr\'ed R\'enyi Institute of Mathematics, Re\'altanoda street 13-15, 1053 Budapest, Hungary}
\email{vertesy.gaspar@gmail.com}

\thanks{\scriptsize $^*$
 The project leading to this application has received funding from the European Research Council (ERC) under the European Union’s Horizon 2020 research and innovation programme (grant agreement No. 741420).
This author was also supported by the Hungarian National Research, Development and Innovation Office--NKFIH, Grant 124003 and  at the time of completion of this paper was holding a visiting researcher position 
at the Rényi Institute.
}
\thanks{\scriptsize $^\text{\textdagger}$ This author was supported by the \'UNKP-21-3 New National Excellence of the Hungarian Ministry of Human Capacities, and by the Hungarian National Research, Development and Innovation Office-NKFIH, Grant 124749.}
\thanks{\scriptsize $^\text{\textdaggerdbl}$  This author was supported by the \'UNKP-20-3 New National Excellence Program of the Ministry for Innovation and Technology from the source of the National Research, Development and Innovation Fund, and by the Hungarian National Research, Development and Innovation Office–NKFIH, Grant 124749. 
 \newline\indent {\it Mathematics Subject
Classification:} Primary :   28A78,  Secondary :  26B35, 28A80, 76N99.
\newline\indent {\it Keywords:}   H\"older continuous function, level set, Sierpi\'nski triangle, fractal conductivity, ramification.}

\date{\today}

%\title{-}
%\author{Zolt\'an Buczolich, Bal\'azs Maga, G\'asp\'ar V\'ertesy}
%\date{}

\begin{abstract}
Hausdorff dimensions of level sets of generic continuous functions defined on fractals 
can give information about the ``thickness/narrow cross-sections'' of a ‘‘network’’ corresponding to a fractal set, $F$.
This lead to the definition of the topological Hausdorff dimension of fractals.
In this paper we continue our study of the level sets of generic $1$-H\"older-$\aaa$ functions. While in a previous paper we gave the initial definitions and established some properties of these generic level sets, in this paper we provide
numerical estimates in the case of the Sierpi\'nski triangle. These calculations give better insight and illustrate why can one think of these generic $1$-H\"older-$\aaa$
level sets as something measuring ``thickness/narrow cross-sections/conductivity''
of  a fractal ‘‘network’’.\\
We also give an example for the phenomenon which we call  phase transition for $D_{*}(\aaa, F)$.  This roughly means that for a certain lower range { of $\aaa$s } only the geometry
of $F$ determines $D_{*}(\aaa, F)$ while for larger values the H\"older exponent, $\aaa$ also matters.
\end{abstract}

\maketitle

\setcounter{tocdepth}{3}

\tableofcontents

%%%%%%%%%%%%%%%%%%%%%%%%%%%%%%%%
%%%%%%%%%%%%%%%%%%%%%%%%%%%%%%%%
%%%%%%%%%%%%%%%%%%%%%%%%%%%%%%%%
%%%%%%%%%%%%%%%%%%%%%%%%%%%%%%%%
%%%%%%%%%%%%%%%%%%%%%%%%%%%%%%%%
%%%%%%%%%%%%%%%%%%%%%%%%%%%%%%%%

\section{Introduction}
  
In \cite{BBEtoph} the concept of topological Hausdorff dimension was introduced.
It has turned out that this concept was related to some ``conductivity/narrowest cross-section'' properties of fractal sets and hence was mentioned and used  in Physics papers, see for example 
\cite{Balankintoph}, \cite{Balankinfracspace}, \cite{Balankintransport}, \cite{Balankintoph2}, and  \cite{Balankinfluid}.
In \cite{MaZha}  the authors studied topological Hausdorff dimension of fractal squares.

The starting point of the research leading to the definition of topological Hausdorff dimension was a purely theoretic question concerning 
the Hausdorff dimension of the level sets of generic continuous functions defined on fractals, apart from  \cite{BBEtoph} see also \cite{BBElevel}. 
(Some people prefer to use the term typical in the Baire category sense instead 
of generic.) 
It has turned out that these generic level sets ``can find'' the narrowest cross-sections'' of the fractals on which they are defined. 
For example the Sierpi\'nski triangle has many
zero dimensional ``cross-sections'' and the Sierpi\'nski carpet has many $\log 2/\log 3$ dimensional cross sections. 
These are the values  of the Hausdorff dimensions of the level sets for almost every levels $y$ in the range of the generic continuous
functions defined on them.
 On Figure \ref{crossed} the red lines crossing the Sierpi\'nski carpet are intersecting the fractal in $\log 2/\log 3$ dimensional sets.
In case of the Sierpi\'nski triangle near the red lines one can obtain curves intersecting the fractal in finitely many points. 
%\begin{center}
\begin{figure}[ht]
\includegraphics[width=0.95\textwidth]{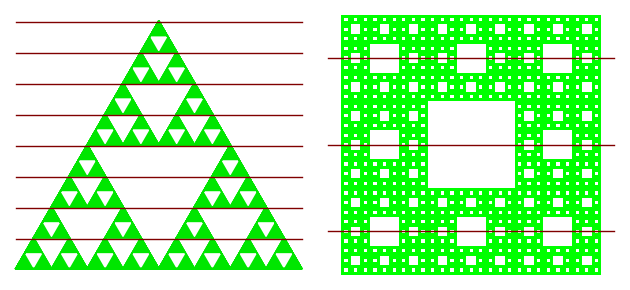}
\caption{Fourth  level approximations of the Sierpi\'nski triangle and carpet with ``narrow cross-sections''\label{crossed}}
\end{figure}
%\end{center}

The level sets of generic continuous functions are ``infinitely compressible'' which  informally speaking  means that we can squeeze almost every level in the range of
the continuous function to places where the fractal domain is the ``thinnest'', yielding that the dimension of these level sets are small.
Consider the Sierpi\'nski triangle for example: even though one cannot squeeze almost every level set onto a curve which intersects the triangle in a countable set of points,
for  a  generic continuous function almost every level set stays ``near'' such curves, yielding that they have zero dimension.
%Of course, for example in case of the Sierpi\'nski triangle one cannot squeeze almost every level set onto a curve which intersects the triangle in countably many points,
% but staying ``near'' such curves the level sets of the generic continous functions can still stay zero dimensional.
 It is a natural question what happens if the level sets/regions  of  our functions are not ``infinitely compressible'' and hence due to thickness of the level regions 
we cannot use for almost every levels the parts of our fractal domains where they are the ``thinnest''. The simplest way to impose a bound on compressibility is considering H\"older functions instead of arbitrary continuous functions. More specifically and explicitly, it is straightforward to observe the level sets of generic 1-H\"older-$\aaa$ functions. 
In  our  recent paper \cite{sier} we started to deal with this question.

|n Section \ref{*secprel} we give and recall some definitions 
and theorems from \cite{sier}
and prove some preliminary results.
 Out of these definitions we mention here that $D_{*}(\aaa, F)$ denotes the  essential supremum of the Hausdorff dimensions of the level sets of a generic
$1$-H\"older-$\aaa$ function defined on $F$.
In \cite{sier} we showed that if $\alpha\in(0,1)$, then for connected self{-}{}similar sets, like the Sierpi\'nski triangle
$D_{*}(\aaa, F)$ equals  the Hausdorff dimension 
of almost every level-set in the range of  a generic 1-H\"older-$\aaa$ function, that is, it is not necessary to consider the essential supremum.

In Sections \ref{*secsier} and \ref{*secsierb} we consider $1$-H\"older-$\aaa$ functions defined on the Sierpi\'nski triangle, $\DDD$. 
In Theorem \ref{allexpthm} we obtain a lower estimate for $D_{*}(\aaa,\DDD)$ which equals in this case the Hausdorff dimension 
of almost every level-set of {\it any 1-H\"older-$\aaa$ function } defined on $\DDD$.

\begin{figure}[ht]
\includegraphics[width=0.5\textwidth]{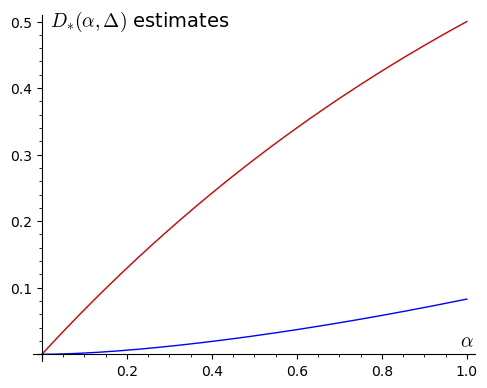}
\caption{Lower and upper estimates of $D_{*}(\aaa,\DDD)$ \label{figsierestimates}}
\end{figure}

In Theorem \ref{*thsierfb} for the Hausdorff dimension 
of almost every level-set of  {\it a generic 1-H\"older-$\aaa$ function } we also calculate an upper estimate. 
Level sets of 1-H\"older-$\aaa$ functions can get quite complicated.  In  some very special cases when either the function is linear, or 
there is a direction such that it is constant on lines pointing in this direction
we need to consider intersections of these lines with our fractal.
Intersection of fractals with lines is a classical topic (see for example \cite{Marstrplane}), which even in the case of the Sierpi\'nski triangle, or carpet is  
still subject of more recent research as well, see
\cite{BBFASK}
\cite{LXZhao}.

In Section \ref{*secssc} we show that if our fractal $F$ is a self-similar set 
satisfying the strong separation condition then 
 the Hausdorff dimension 
of almost every level-set of  a generic 1-H\"older-$\aaa$ function  is constant
zero for all $\aaa\in (0,1)$, that is the introduction of generic 1-H\"older-$\aaa$ functions
is not giving any new information compared to the case of continuous functions.
% It also shows that in Section \ref{phasetransfatcantor} indeed it will be necessary to use fat Cantor sets.

In Section \ref{*secphase} we discuss and illustrate a phenomenon which we call 
phase transition. 
In Theorem \ref{phasetransfatcantor} we
 give an example of a fractal $F$ for which   
  the Hausdorff dimension 
of almost every level-set of  a generic 1-H\"older-$\aaa$ function 
 for small values $\aaa$ 
equals the Hausdorff dimension 
of almost every level-set of  a generic continuous function defined on $F$.
 This  means, at a heuristic level, that for such fractals the level sets of 
generic 1-H\"older-$\aaa$ functions are as flexible/compressible as those
of a continuous function. 
 On the other hand,  for larger values of $\aaa$ we have $D_{*}(\aaa, F)>0$, that is after a critical value of $\aaa$ these level sets 
are not as flexible/compressible as those of a continuous function and we experience some ``traffic'' jams as we try to push across the fractal the level sets
of generic 1-H\"older-$\aaa$ functions.
The fractal $F$ discussed in this section will be the Cartesian product of a  fat Cantor set with  itself,  hence it will be of zero topological dimension. Note that due to Section \ref{*secssc}, such a construction requires fat Cantor sets. Indeed, a self-similar Cantor set cannot have the above properties.
In \cite{sier} we proved that $D_{*}(\aaa, F)$ is monotone increasing
in $\aaa$ for any compact set $F$. 
It is a natural question whether this function is continuous.
For the fractal $F$ of Theorem \ref{phasetransfatcantor} there is not only a 
phase transition at $\aaa_{\phi}=\frac{1}{2}$, but also a jump discontinuity
of $D_{*}(\aaa, F)$. { In \cite{sier} we give an example of a fractal for which $D_{*}(\aaa, F)$ has a jump discontinuity at $0$ and for that fractal there is no phase transition.}
{ The behaviour exhibited by these fractals is in contrast with the behaviour of the Sierpi\'nski triangle $\Delta$: 
as demonstrated by the bounds given in Section \ref{*secsier} and \ref{*secsierb}, $D_{*}(\aaa, \Delta)$ is continuous and increasing in 0, ruling out 
the possibility of phase transition.}

%From now on \emph{generic} is always understood in the sense of Baire category.

%%%%%%%%%%%%%%%%%%%%%%%%%%%%%%%%
%%%%%%%%%%%%%%%%%%%%%%%%%%%%%%%%
%%%%%%%%%%%%%%%%%%%%%%%%%%%%%%%%
%%%%%%%%%%%%%%%%%%%%%%%%%%%%%%%%
%%%%%%%%%%%%%%%%%%%%%%%%%%%%%%%%
%%%%%%%%%%%%%%%%%%%%%%%%%%%%%%%%

\section{Preliminaries}\label{*secprel}

In this section first we recall some definitions and results from 
\cite{sier}.

The distance of $x,y\in \R^{p}$ is denoted by $|x-y|$. 
If $A\sse \R^{p}$ then the diameter of $A$ is denoted by $|A|=\sup\{ |x-y|: x,y\in A \}.$
The open ball of radius $r$ centered at $x$ is denoted by $B(x,r)$. 
%For a set $E\sse \R^{p}$ its $r$-neighborhood $\{ x: \inf\{ |x-y|: y\in E \}<r \}$ is denoted  by $U_r(E)$. 

Assume that $F\subseteq \mathbb{R}^p$ for some $p>0$. 
In what follows, $F$ will be some fractal set, usually we suppose that it is compact. 

We say that a function $f: F \to \mathbb{R}$ is $c$-H\"older-$\alpha$ for $c>0$ and $0<\alpha\leq 1$ if $|f(x)-f(y)|\leq c|x-y|^\alpha$ { for all $x,y\in F$.} 
The space of such functions will be denoted by $C^{\aaa}_{c}(F)$, or if $F$ is fixed then by $C^{\aaa}_{c}$. 
The space of H\"older-$\aaa$ functions will be denoted by $C^{\aaa}$, that is $C^{\aaa}= \bigcup_{c>0}  C^{\aaa}_{c}.$
We say that $f$ is $c^-$-H\"older-$\alpha$ if there exists $c'<c$ such that
$f$ is $c'$-H\"older-$\alpha$. The set of such functions is denoted by $C^{\aaa}_{c^{-}}$, that is $C^{\aaa}_{c^{-}}=\bigcup_{c'<c}C^{\aaa}_{c'}$.

 In order to introduce topology on the set $C^{\aaa}(F)$, we think of it as a subset of continuous functions 
equipped with the supremum norm $||f||_{\oo}=\sup_{x\in F}|f(x)|.$  To obtain a closed subset of $C^{\aaa}(F)$, 
we  take  $1$-H\"older-$\alpha$ functions, $C^{\aaa}_{1}(F)$, 
and use the metric coming from the supremum norm.

For $\rrr>0$ and $f\in C(F)$ we denote by $B(f,\rrr)$ the open ball of radius $\rrr$
centered at $f$, the ball taken in the supremum norm. 
If $f\in C_{1}^{\aaa}(F)$ then
$B(f,\rrr)\cap C_{1}^{\aaa}(F)$ will denote the corresponding open ball in the subspace $C_{1}^{\aaa}(F).$

Since similarities are not changing the geometry  of  a fractal set to avoid some 
unnecessary technical difficulties we suppose that we work with fractal sets $F$
of diameter at most one (unless otherwise stated). 
This way
\begin{equation}\label{*caaaineq}
C^{\aaa}_{1}(F)\sse C^{\aaa'}_{1}(F) \text{  if $\aaa>\aaa'$.}
\end{equation}

\begin{comment}

\begin{definition}\label{*defboxd}
Given a non-empty set $F\sse \R^p$
let $a_N(F)$ denote the number of $2^{-N}$ grid hypercubes 
intersected by $F$. 
The lower and upper box dimensions of $F$
equal $\ds \ldimb F=\liminf_{N\to\oo}\frac{\log a_{N}(F)}{N\log 2}$,
$\ds \udimb F=\limsup_{N\to\oo}\frac{\log a_{N}(F)}{N\log 2}$.
If $\ldimb F=\udimb F$ then this common value is the box dimension of $F$, denoted by $\dimb F$.  
For an empty set $F$ we put $\ldimb F=\udimb F=\dimb F=0$. 
\end{definition}
\marginpar{\tiny{ A Lemma \ref{*lemfab} környékétől eltekintve sehol se számolunk boxdimenziót, s ott konkrétan épp egy másik definicióval számolunk. Tekintve hogy egyszeri alkalomról van szó, s az ottani definíció kissé speciális, az ittenit elhagynám, s az ottanit se emelném át a bevezetőbe, jó helyen van az ott szerintem.}}

\end{comment}

Suppose $A\sse \R^{p}$. 
Given $\ddd>0$ the sets $U_{j}$, $\ddd\text{-cover }A$
if $|U_{j}|<\ddd$ for all $j$ and $A\sse \bigcup_{j} U_{j}$.

The $s$-dimensional Hausdorff measure (see its definition for example in \cite{[Fa1]}) is denoted by $\cah^{s}$. 
Recall that the Hausdorff dimension of $A\sse \R^{p}$
is given by
 \begin{equation}\label{*defdimh}
  \dim_H A=\inf\{ s : \cah^{s}(A)=0\}=
\end{equation}
  $$\inf\{s:\ex \mathbf{C}_s>0,\ \ax\ddd>0,\  \ex \{ U_{j} \} \text{ a } \ddd\text{-cover of }A  \text{ s.t. } \sum_{j}|U_{j}|^{s}<\mathbf{C}_s \}.$$
  
% One can observe that in the above definition instead of arbitrary  $\ddd\text{-covers of } A$ one can use open $\ddd\text{-covers of } A$, that is we can assume that the sets $U_{j}$ are open. 

 \begin{comment}
xxxxxxxxxxxxxxxxxxxxxxxxxxxxxxxxxxxxxxxxx  
  
We will need an equivalent definition of the Hausdorff dimension. 
Denote by 
$\ds \frq_{n,0,d}$ the dsystem of open dyadic grid cubes at level $n$, that is 
$$ \frq_{n,0,d}=\{ \prod_{j=1}^{d} ((k_{j}-1)2^{-n},k_{j}2^{-n}): k_{j}\in\Z,\ j=1,...,d\}.$$
Since these open cubes do not cover $\R^{p}$ we also take a translated system
$$ \frq_{n,0,d}=\{ Q+(2^{-n-1},...,2^{-n-1}): Q\in \frq_{n,0,d}\}.$$
Finally the union of these systems $\frq_{n,d}=\frq_{n,0,d}\cup \frq_{n,1,d}$ is an open cover of $\R^{p}$. 
In case the dimension $d$ is fixed we omit $d$ from the notation.

Finally we set $\frq=\frq_{d}=\bigcup_{n}\frq_{n}$.

Suppose $A\sse \R^{p}$. 
Given $\ddd>0$ the sets $U_{j}$, $\ddd-\frq-\text{cover }A$
if $|U_{j}|<\ddd$, $U_{j}\in \frq$ for all $j$ and $A\sse \bigcup_{j} U_{j}$.

It is an easy exercise left to the reader that we can use these covers in the definition of Hausdorff dimension, that is
\begin{equation}\label{*defdimhfrq}
  \dim_H A=
\inf\{s:\ex \mathbf{C}_s>0,\ \ax\ddd>0,\  \ex \{ U_{j} \} \text{ a } \ddd-\frq-\text{cover of }A  \text{ s.t. } \sum_{j}|U_{j}|^{s}<\mathbf{C}_s \}.
\end{equation}

xxxxxxxxxxxxxxxxxxxxxxxxxxxxxxxxxxxxxxx
\end{comment}

We will use the Mass Distribution Principle, see for example \cite{[Fa1]}, Chapter 4.

%%%%%%%%%%%%%%%%%%%%%%%%%%%%%%%%
%%%%%%%%%%%%%%%%%%%%%%%%%%%%%%%%
%%%%%%%%%%%%%%%%%%%%%%%%%%%%%%%%
%%%%%%%%%%%%%%%%%%%%%%%%%%%%%%%%

\begin{theorem}\label{thMDP}
Let $\mu$ be a mass distribution (a finite, non-zero  Borel  measure) on $F\subset \R^p$. 
Suppose that for some 
$s\geq 0$ there are numbers $c>0$ and $\ddd>0$ such that $\ds \mmm(U)\leq c|U|^{s}$
for all sets $U$ with $|U|\leq \ddd$. 
Then $\cah^{s}(F)\geq \mu(F)/c$ and $s\leq \dim F.$
\end{theorem}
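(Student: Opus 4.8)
The plan is to reduce both assertions to a single covering estimate and then read off the dimension inequality from the definition \eqref{*defdimh}. First I would fix an arbitrary $\delta$-cover $\{U_j\}$ of $F$, that is, a countable collection with $|U_j|<\delta\leq \ddd$ and $F\sse\bigcup_j U_j$, where $\ddd$ is the threshold supplied by the hypothesis. Since $\mu$ is a finite Borel measure, monotonicity together with countable subadditivity yields
$$\mu(F)\leq \mu\Big(\bigcup_j U_j\Big)\leq \sum_j \mu(U_j).$$
Each cover element satisfies $|U_j|\leq \ddd$, so the standing assumption $\mu(U)\leq c|U|^s$ applies to every $U_j$ (those disjoint from $F$ contribute $0$ and cause no trouble), giving $\mu(F)\leq c\sum_j |U_j|^s$, i.e. $\sum_j |U_j|^s\geq \mu(F)/c$.

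Next I would observe that the lower bound $\mu(F)/c$ is independent of the particular cover, so taking the infimum over all $\delta$-covers shows $\cah^s_{\delta}(F)\geq \mu(F)/c$ for every $\delta\leq \ddd$, and letting $\delta\to 0$ gives $\cah^s(F)\geq \mu(F)/c$. Because $\mu$ is non-zero and finite we have $\mu(F)>0$, hence $\cah^s(F)>0$; in particular $\cah^s(F)\neq 0$. This already establishes the first conclusion.

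Finally I would extract the dimension estimate from the definition $\dim_H F=\inf\{t:\cah^{t}(F)=0\}$. If some exponent $t<s$ satisfied $\cah^{t}(F)=0$, then the elementary comparison $\sum_j |U_j|^s\leq \delta^{\,s-t}\sum_j |U_j|^t$, valid on any $\delta$-cover with $|U_j|<\delta$, would force $\cah^s(F)=0$ upon letting $\delta\to0$, contradicting $\cah^s(F)>0$. Thus every exponent annihilating $\cah$ is at least $s$, whence $\dim_H F=\inf\{t:\cah^{t}(F)=0\}\geq s$, which is the asserted inequality $s\leq \dim_H F$.

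This argument is elementary, and I do not expect a substantial obstacle: the only points needing care are bookkeeping, namely that subadditivity is applied to a countable cover, that every cover element genuinely meets the diameter constraint $|U_j|\leq \ddd$ so that the hypothesis is available, and that the cover-independent bound survives the passage $\delta\to0$. The non-triviality of $\mu$ is exactly what keeps the final bound from being vacuous. The real content of the principle lies not in this proof but in the hypothesis itself, since in applications the difficult step is to construct a measure $\mu$ satisfying $\mu(U)\leq c|U|^s$.
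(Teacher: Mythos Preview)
Your proof is correct and is the standard argument for the Mass Distribution Principle. Note, however, that the paper does not actually prove this theorem: it is stated as a known result and referenced to Falconer's textbook (\cite{[Fa1]}, Chapter~4), so there is no in-paper proof to compare against. Your argument is precisely the one found there.
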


Let $D^f(r,F)=D^f(r)=\dim_H(f^{-1}(r))$ for any function $f: F\to \mathbb{R}$, that is $D^f(r)$ denotes the Hausdorff dimension of the function $f$ at level $r$.

We are interested in the Hausdorff dimension of the level sets for sufficiently large sets of levels in the sense of Lebesgue. Hence we put 
\begin{displaymath}
D_{*}^f(F)=\sup\{d: \lambda\{r : D^f(r,F)\geq{d}\}>0\},
\end{displaymath}
where $\lambda$ denotes the one-dimensional Lebesgue measure.

The definition of $D_*^f(F)$  depends on $f$. 
In case we want a definition depending only on the fractal $F$ we can first take 
\begin{displaymath}
\underline{D}_{*}(\alpha,F)=\inf\{D_{*}^f: f:F\to\mathbb{R} \text{ is  locally  non-constant and $1$-H\"older-}\alpha\},
\end{displaymath}
where the locally non-constant property is understood as $f$ is non-constant on $U\cap F$ where $U$ is any neighborhood of any accumulation point of $F$.

As we are only concerned with nonnegative numbers, by convention the infimum of the empty set is $0$. 
The value $\underline{D}_{*}(\alpha,F)$ concerns those functions for which 
``most'' level sets are smallest possible.

We denote by $\mg_{1,\aaa}(F)$, or by simply $\mg_{1,\aaa}$ the  set  of dense $G_{\ddd}$
sets in $C_{1}^{\aaa}(F)$.

We put
\begin{equation}\label{*defDcsaF}
D_{*}(\aaa,F)=\sup_{\cag\in \mg_{1,\aaa}}\inf\{ D_{*}^{f}(F):f\in \cag \}.
\end{equation}

We recall Theorem 5.2 of \cite{sier}, which states that one can think of $D_{*}(\aaa,F)$ in a substantially simplified form, essentially due to the supremum being a maximum, for which the infimum is taken over a singleton. Precisely:
 
 \begin{theorem} \label{*thmgenex}
If   $0< \aaa\leq 1$  and $F\subset\R^p$ is compact, then there is a dense $G_\delta$ subset $\cag$ of $C_1^\alpha(F)$ such that for every $f\in\cag$ we have $D_*^f(F) = D_*(\alpha,F)$.
\end{theorem}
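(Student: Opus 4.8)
The plan is to collapse the $\sup$--$\inf$ of \eqref{*defDcsaF} onto a single residual set and a single value, making precise the phrase that ``the supremum is a maximum for which the infimum is taken over a singleton.'' First I would record that, since for a dense $G_\delta$ set $\cag$ one has $\inf_{f\in\cag}D_*^f(F)\ge d$ precisely when $\cag\sse\{f\in C_1^\aaa(F):D_*^f(F)\ge d\}$, the definition rewrites as
\[
D_*(\aaa,F)=\sup\{d\ge 0:\{f\in C_1^\aaa(F):D_*^f(F)\ge d\}\text{ contains a dense }G_\delta\}.
\]
Throughout I use that $C_1^\aaa(F)$, being a closed subspace of $C(F)$ in the supremum metric, is a complete metric space and hence a Baire space. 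Write $D=D_*(\aaa,F)$.

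The easy half is the lower bound on a residual set. For each $k$ the displayed characterization supplies a dense $G_\delta$ set $\cag_k\sse\{f:D_*^f(F)\ge D-1/k\}$; by the Baire category theorem $\cag_0=\bigcap_k\cag_k$ is again a dense $G_\delta$, and every $f\in\cag_0$ satisfies $D_*^f(F)\ge D-1/k$ for all $k$, so $D_*^f(F)\ge D$ on $\cag_0$. This is exactly the content that the outer supremum is attained.

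The main obstacle is the matching upper bound on a residual set, i.e.\ showing that
\[
\{f:D_*^f(F)>D\}=\bigcup_k\{f:D_*^f(F)\ge D+\tfrac1k\}
\]
is meager. By the maximality of $D$, none of the sets $\{f:D_*^f(F)\ge D+1/k\}$ contains a dense $G_\delta$; however this is \emph{not} formally sufficient, since a Bernstein-type set witnesses that ``contains no dense $G_\delta$'' does not imply ``meager.'' Genuine structural input is therefore needed, and this is the heart of the matter. The input I would use is a perturbation statement for $1$-H\"older-$\aaa$ functions: for every $f$ and every $\varepsilon>0$ there is $g$ with $\|f-g\|_\oo<\varepsilon$ such that for almost every level $r$ the set $g^{-1}(r)$ carries an efficient cover forcing $\dim_H g^{-1}(r)<D+1/k$, and such that at each fixed finite scale this efficient covering survives for every function in a whole neighbourhood of $g$. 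Fixing $k$ and intersecting these open dense conditions over the scales $2^{-n}$ produces a dense $G_\delta$ set $\mathcal U_k\sse\{f:D_*^f(F)<D+1/k\}$; equivalently $\{f:D_*^f(F)\ge D+1/k\}$ is meager. This is precisely where the H\"older exponent $\aaa$ and the geometry of $F$ (the availability of thin cross-sections into which one pushes the level sets) enter the proof.

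Finally I would intersect the two halves. The set $\cag=\cag_0\cap\bigcap_k\mathcal U_k$ is a dense $G_\delta$ by Baire, and every $f\in\cag$ satisfies simultaneously $D_*^f(F)\ge D$ (from $\cag_0$) and $D_*^f(F)<D+1/k$ for all $k$, hence $D_*^f(F)\le D$; thus $D_*^f(F)=D=D_*(\aaa,F)$ for every $f\in\cag$, as claimed.
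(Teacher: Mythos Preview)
First a framing remark: this theorem is not proved in the present paper at all --- it is \emph{recalled} as Theorem~5.2 of \cite{sier}. So there is no ``paper's own proof'' here to compare against; the only relevant ingredient the paper does supply is Lemma~\ref{*lemdfb} (a corollary of Lemma~5.1 of \cite{sier}), which is precisely the kind of stability statement you gesture at.

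Your lower-bound half is correct and standard: intersecting the $\cag_k$'s is the right way to realise the outer supremum.

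The upper-bound half, however, has a genuine gap, and you locate it yourself when you write that ``genuine structural input is therefore needed.'' The perturbation statement you then invoke assumes exactly what has to be proved. You claim that for every $f$ and every $\varepsilon>0$ there is $g\in B(f,\varepsilon)$ with $D_*^g<D+1/k$; this is nothing other than the density of $\{g:D_*^g<D+1/k\}$, and density of this set does \emph{not} follow from the definition of $D$ (the fact that $\{g:D_*^g\ge D+1/k\}$ contains no dense $G_\delta$ says nothing about it being nowhere dense, as you correctly note for the Bernstein obstruction). So you have pushed the difficulty into an unproved black box. Your closing remark that ``this is precisely where the H\"older exponent $\aaa$ and the geometry of $F$ (the availability of thin cross-sections\dots) enter the proof'' is also misleading: the theorem holds for \emph{every} compact $F$, so the argument cannot rely on any particular geometric feature of $F$.

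What actually closes the gap in \cite{sier} is the mechanism behind Lemma~\ref{*lemdfb}: for any countable dense family $\{f_k\}$ there is a dense $G_\delta$ on which $\sup D_*^f\le\sup_k D_*^{f_k}$ (the ``covers at finite scales are open'' idea you sketch is exactly how such a lemma is proved). This gives an upper semicontinuity of $D_*^f$ along a residual set that is \emph{independent of the target value $D$}; one then combines it with the lower-bound residual set $\cag_0$ to force the generic value to collapse to a single number. Your sketch has the right shape for the stability step, but you still owe an argument showing that the value one collapses to is $D$ rather than some $D^*\ge D$ --- this is the step your ``there exists $g$ with $D_*^g<D+1/k$'' was meant to supply and does not.
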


 As we mentioned in the introduction, another result from \cite{sier} 
implies that if  $0<\aaa<1 $  and $F$ is a connected self{-}{}similar set then
 one can think of $D_{*}(\aaa,F)$ as 
the Hausdorff dimension of almost every level set in the range of  a  generic 
$C_1^\alpha(F)$
function. 

%So far we have considered $0<\aaa\leq 1$. 
To include generic continuous functions 
in our notation we set  $$D_{*}(0, F)=\max\{0,\dim_{tH} F-1\},$$ where $\dim_{tH} F$
is the topological Hausdorff dimension of $F$.
For the definition see \cite{BBEtoph}.
By results in \cite{BBEtoph}
we have  that if $f$ is  a  generic continuous function on $F$, then 
$$D_{*}(0, F)=D_{*}^f(F).$$

For brevity, often we will omit $F$ from our notation.

Recall from \cite{sier} the following trivial upper bound for $D_*(\alpha, F) $.

\begin{theorem} \label{thm:trivial_upper_bound}
For any bounded measurable set $F\subseteq \mathbb{R}^{p}$, we have $$D_*(\alpha, F) \leq \max(0, \overline{\dim}_B (F) - 1).$$
{ (As usual, $ \overline{\dim}_B (F)$ denotes the upper box dimension of $F$.)}
 \end{theorem}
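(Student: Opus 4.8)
The plan is to produce, inside every residual subset of $C_1^\alpha(F)$, a function whose level sets are at most $(\overline{\dim}_B F-1)$-dimensional for Lebesgue-almost every level, and then to invoke Theorem \ref{*thmgenex}. Since that theorem provides a dense $G_\delta$ set $\cag$ on which $D_*^f$ is constantly equal to $D_*(\alpha,F)$, it suffices to exhibit a dense $G_\delta$ set $R\sse C_1^\alpha(F)$ with $D_*^f(F)\le\max(0,\overline{\dim}_B F-1)$ for every $f\in R$: then $\cag\cap R$ is nonempty and forces the desired inequality.

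\emph{Geometric core (the projection/slicing estimate).} Write $s=\overline{\dim}_B F$ and fix the coordinate projection $\pi(x)=x_1$. Because $|F|\le 1$ and $0<\alpha\le 1$, for $x,y\in F$ we have $|\pi(x)-\pi(y)|\le|x-y|\le|x-y|^\alpha$, so $\pi\in C_1^\alpha(F)$. The level sets $\pi^{-1}(r)\cap F$ are the hyperplane slices of $F$, and I would bound their dimension by a box-counting average over levels. Fix $t>\max(0,s-1)$ and pick $s'\in(s,t+1)$. For small $\ddd$ cover $F$ by $N_\ddd\le\ddd^{-s'}$ sets $U_i$ with $|U_i|\le\ddd$; each $\pi(U_i)$ is an interval with $\lambda(\pi(U_i))\le|U_i|\le\ddd$. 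For each $r$ the family $\{U_i:r\in\pi(U_i)\}$ is a $\ddd$-cover of $\pi^{-1}(r)\cap F$, so integrating the corresponding covering sums gives
\[
\int_{\R}\Big(\sum_{i:\,r\in\pi(U_i)}|U_i|^t\Big)\,dr=\sum_i|U_i|^t\,\lambda(\pi(U_i))\le N_\ddd\,\ddd^{\,t+1}\le \ddd^{\,t+1-s'}\xrightarrow[\ddd\to0]{}0.
\]
By Fatou's lemma and the monotone convergence of the scale-$\ddd$ content to $\cah^t$, this yields $\cah^t(\pi^{-1}(r)\cap F)=0$ for almost every $r$, hence $D_*^\pi(F)\le t$; letting $t\downarrow\max(0,s-1)$ gives $D_*^\pi(F)\le\max(0,s-1)$.

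\emph{Genericity (from one function to a residual set).} Since a single projection is not dense in $C_1^\alpha(F)$, I would upgrade this by a density-plus-$G_\delta$ argument. First, I would encode ``most level sets are efficiently $\ddd$-coverable'' as a countable intersection over scales and rational thresholds of sets of functions that are open in the supremum metric, producing a $G_\delta$ set $R$ on which $D_*^f\le\max(0,s-1)$. Then I would show $R$ is dense: given $f\in C_1^\alpha(F)$ and $\varepsilon>0$, modify $f$ at a sufficiently fine scale so that on each piece it becomes projection-like in one coordinate while staying $1$-H\"older-$\alpha$ and within $\varepsilon$ of $f$ in supremum norm; the same averaged box count as above, applied piecewise, places the perturbed function in $R$. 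This is the mechanism by which generic H\"older functions realise the slicing bound, and it is where the framework of \cite{sier} enters.

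\emph{Main obstacle.} The genuine difficulty is the genericity step, not the slicing estimate. One must (i) phrase the almost-every-level smallness of level sets as an honest $G_\delta$ condition, which requires controlling how the scale-$\ddd$ content of $f^{-1}(r)$ reacts to small uniform perturbations of $f$ (level sets can move discontinuously), and (ii) build the dense family of projection-like H\"older perturbations while keeping the H\"older constant at most $1$. The projection computation already delivers the exponent $s-1$ cleanly; all the remaining work lies in transporting it to a residual set so that the sup--inf defining $D_*(\alpha,F)$ in \eqref{*defDcsaF} is controlled.
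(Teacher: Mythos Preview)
The paper does not prove this theorem here; it is merely recalled from the companion paper \cite{sier}. So there is no proof in the present paper to compare against. Nonetheless, the proposal can be evaluated on its own, and the tools already quoted in Section~\ref{*secprel} make clear how the argument should go.

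Your slicing computation for the projection $\pi$ is correct and is indeed the heart of the matter. What you seem to have overlooked is that the very same computation works for \emph{any} Lipschitz function $g\in C_1^\alpha(F)$: if $g$ is $L$-Lipschitz then $\lambda(g(U_i))\le L|U_i|$, and the extra constant $L$ is harmless in the estimate
\[
\int_{\R}\mathcal H_\delta^t\big(g^{-1}(r)\cap F\big)\,dr\le \sum_i |U_i|^t\,\lambda(g(U_i))\le L\,N_\delta\,\delta^{\,t+1}\le L\,\delta^{\,t+1-s'}\to 0.
\]
Hence $D_*^g(F)\le\max(0,\overline{\dim}_B F-1)$ for every Lipschitz $g\in C_1^\alpha(F)$.

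This makes your ``main obstacle'' disappear. You do not need to phrase the level-set smallness as an explicit $G_\delta$ condition, nor do you need to build projection-like perturbations. By Lemma~\ref{lipschitzapprox} the Lipschitz $1$-H\"older-$\alpha$ functions are dense in $C_1^\alpha(F)$; pick any countable dense family $\{f_k\}$ of them. Lemma~\ref{*lemdfb} then hands you a dense $G_\delta$ set $\cag$ with $\sup_{f\in\cag}D_*^f(F)\le\sup_k D_*^{f_k}(F)\le\max(0,\overline{\dim}_B F-1)$, and intersecting with the set from Theorem~\ref{*thmgenex} finishes the proof exactly as you outlined. In other words, the $G_\delta$ bookkeeping you were worried about is already packaged inside Lemma~\ref{*lemdfb}; the only analytic input needed is your slicing estimate, applied to Lipschitz functions rather than to the single projection $\pi$. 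This is almost certainly the argument in \cite{sier}, given which lemmas the present paper chose to import.
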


In \cite{sier} we state and prove several approximation results. We will use the next two lemmas later in this paper.

\begin{lemma} \label{lipschitzapprox}
Assume that $F$ is compact and $c>0$ is fixed. 
Then the Lipschitz $c$-H\"older-$\alpha$ functions defined on $F$ form a dense subset of the $c$-H\"older-$\alpha$ functions.
 (We say that $f$ is Lipschitz $c$-H\"older-$\alpha$, if $f$ is Lipschitz and $f\in C^{\aaa}_c(F)$.)
\end{lemma}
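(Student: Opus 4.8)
The plan is to reduce the problem to the whole space $\mathbb{R}^p$, where genuine smoothing is available, and then restrict back to $F$. Given $f\in C^\alpha_c(F)$, I would first extend it to a $c$-H\"older-$\alpha$ function on all of $\mathbb{R}^p$ by the McShane-type formula
$$\tilde f(x) = \inf_{y\in F}\bigl(f(y) + c|x-y|^\alpha\bigr), \qquad x\in\mathbb{R}^p.$$
Since $F$ is compact and $f$ is bounded, this infimum is finite and attained. Taking $y=x$ for $x\in F$ shows $\tilde f(x)\le f(x)$, while the H\"older bound on $f$ gives $f(y)+c|x-y|^\alpha \ge f(x)$ for every $y\in F$; hence $\tilde f|_F = f$.

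Next I would verify that $\tilde f$ inherits the \emph{exact} H\"older constant $c$. Fix $x_1,x_2\in\mathbb{R}^p$ and pick $y^*$ nearly realizing the infimum defining $\tilde f(x_1)$. Then $\tilde f(x_2) \le f(y^*) + c|x_2-y^*|^\alpha$, and the triangle inequality together with the subadditivity $(a+b)^\alpha \le a^\alpha + b^\alpha$ (valid for $a,b\ge 0$ and $0<\alpha\le 1$) yields $\tilde f(x_2) \le \tilde f(x_1) + c|x_1-x_2|^\alpha$ after letting the approximation error tend to $0$. Swapping the roles of $x_1$ and $x_2$ gives $|\tilde f(x_1)-\tilde f(x_2)| \le c|x_1-x_2|^\alpha$, so $\tilde f\in C^\alpha_c(\mathbb{R}^p)$.

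The final step is to smooth $\tilde f$ by convolution with a standard nonnegative mollifier $\phi_\varepsilon$ (supported in the ball of radius $\varepsilon$, with $\int\phi_\varepsilon = 1$), setting $g_\varepsilon = \tilde f * \phi_\varepsilon$. The crucial observation is that averaging does not enlarge the H\"older constant: writing $g_\varepsilon(x)-g_\varepsilon(y) = \int \bigl(\tilde f(x-z)-\tilde f(y-z)\bigr)\phi_\varepsilon(z)\,dz$ and applying the $c$-H\"older bound of $\tilde f$ pointwise under the integral gives $|g_\varepsilon(x)-g_\varepsilon(y)| \le c|x-y|^\alpha$. Thus $g_\varepsilon\in C^\alpha_c(\mathbb{R}^p)$, and being $C^\infty$ it is Lipschitz on any bounded neighborhood of $F$, so its restriction to $F$ is Lipschitz $c$-H\"older-$\alpha$. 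Since $|g_\varepsilon(x)-\tilde f(x)| \le \int c|z|^\alpha\phi_\varepsilon(z)\,dz \le c\varepsilon^\alpha$, we get $g_\varepsilon\to\tilde f$ uniformly, hence $g_\varepsilon|_F\to f$ uniformly on $F$; letting $\varepsilon\to 0$ produces the required approximating sequence.

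I expect the only genuine obstacle to be preserving the H\"older constant exactly equal to $c$ (rather than $c$ plus a vanishing error) throughout both operations; this is precisely what forces the extension-plus-mollification route. A naive infimal convolution $\inf_{y\in F}(f(y)+\lambda|x-y|)$ taken directly on $F$ would produce Lipschitz functions converging uniformly to $f$, but only $c$-H\"older-$\alpha$ up to an additive error of order $\delta(\lambda)\to 0$, so it would fail to land inside $C^\alpha_c(F)$. Both steps above avoid this defect exactly because the concave modulus $t\mapsto ct^\alpha$ is subadditive and because convolution is an average.
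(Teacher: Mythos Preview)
Your argument is correct. The extension $\tilde f(x)=\inf_{y\in F}\bigl(f(y)+c|x-y|^\alpha\bigr)$ does preserve the exact H\"older constant because of the subadditivity $(a+b)^\alpha\le a^\alpha+b^\alpha$, and mollification by a nonnegative kernel of mass one never increases a translation-invariant modulus of continuity; the resulting $g_\varepsilon$ is $C^\infty$, hence Lipschitz on the compact $F$, and $\|g_\varepsilon-\tilde f\|_\infty\le c\varepsilon^\alpha\to 0$. Every step is sound.

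Note that the present paper does not actually prove this lemma: it is quoted from \cite{sier} in the preliminaries section without argument, so there is no ``paper's own proof'' here to compare against. Your extension-plus-mollification route is a standard and clean way to obtain the result; the only point worth underlining (and you do underline it) is that one must keep the H\"older constant \emph{equal} to $c$ throughout, which is exactly what the McShane extension and convex averaging achieve, whereas a direct Lipschitz inf-convolution on $F$ would typically overshoot the constant.
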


\begin{lemma} \label{piecewise_affine_approx}
Assume that $F$ is compact, $0<\alpha<1$, and $0<c$ are fixed. 
Then the locally non-constant piecewise affine $c^{-}$-H\"older-$\alpha$ functions defined on $F$ form a dense subset of the $c$-H\"older-$\alpha$ functions.
\end{lemma}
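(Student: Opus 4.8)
The plan is to build $g$ by successively smoothing $f$ --- extend, mollify, then replace the mollification by its piecewise linear interpolant on a very fine mesh --- and to correct at the end the slight Hölder inflation introduced by interpolation. First I would extend $f$ off $F$ without enlarging its Hölder constant. Since $0<\alpha\le 1$, the map $d_\alpha(x,y)=|x-y|^\alpha$ is a metric on $\mathbb{R}^p$ (subadditivity of $t\mapsto t^\alpha$), with respect to which $f$ is $c$-Lipschitz; the McShane formula $H(x)=\inf_{y\in F}\bigl(f(y)+c|x-y|^\alpha\bigr)$ then gives a $c$-Hölder-$\alpha$ extension $H\colon\mathbb{R}^p\to\mathbb{R}$ with $H|_F=f$ (alternatively one may first pass to a Lipschitz $c$-Hölder-$\alpha$ function using Lemma \ref{lipschitzapprox}). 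Convolving with a smooth mollifier $\varphi_\sigma$ of support radius $\sigma$ preserves the Hölder constant, so $H_\sigma=H*\varphi_\sigma$ is again $c$-Hölder-$\alpha$ and smooth, with $\|H_\sigma-H\|_\infty\le c\sigma^\alpha$; using the vanishing moments of $\nabla\varphi_\sigma$ and $D^2\varphi_\sigma$ one gets $\|\nabla H_\sigma\|_\infty\le cK_1\sigma^{\alpha-1}$ and $\|D^2H_\sigma\|_\infty\le cK_2\sigma^{\alpha-2}$, with $K_1,K_2$ depending only on $\varphi$.

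Next I would fix a shape-regular triangulation of a neighborhood of $F$ with mesh size $\delta\ll\sigma$ and let $g$ be the piecewise linear interpolant of $H_\sigma$ at the vertices, so that $g$ is piecewise affine. The standard finite-element estimates give $\|g-H_\sigma\|_\infty\le C_0\delta^2\|D^2H_\sigma\|_\infty$ and $\|\nabla g-\nabla H_\sigma\|_\infty\le C_0\delta\|D^2H_\sigma\|_\infty$, with $C_0$ depending only on the shape-regularity (hence only on $p$). Writing $t=\delta/\sigma$, for $|x-y|\ge\delta$ I would estimate $|g(x)-g(y)|\le|H_\sigma(x)-H_\sigma(y)|+2\|g-H_\sigma\|_\infty\le c|x-y|^\alpha+2C_0cK_2\,t^{2-\alpha}\delta^\alpha$ and absorb the additive term into $|x-y|^\alpha$; for $|x-y|\le\delta$ I would instead use that $g$ is globally Lipschitz with constant $\|\nabla g\|_\infty$ and write $|g(x)-g(y)|\le\|\nabla g\|_\infty\,\delta^{1-\alpha}|x-y|^\alpha$, where $\|\nabla g\|_\infty\,\delta^{1-\alpha}\le cK_1t^{1-\alpha}+C_0cK_2t^{2-\alpha}$. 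Both bounds show that $g$ is $c(1+\beta)$-Hölder-$\alpha$ with $\beta=\beta(t)\to0$ as $t\to0$, while $\|g-f\|_\infty\le\|H_\sigma-H\|_\infty+\|g-H_\sigma\|_\infty\to0$.

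Finally I would rescale towards the midrange: with $\bar g=\tfrac12(\max_F g+\min_F g)$ and a tiny $\eta>0$, set $\tilde g=\lambda(g-\bar g)+\bar g$ where $\lambda=(1-\eta)/(1+\beta)$. Then $\tilde g$ is piecewise affine with Hölder constant $\lambda\cdot c(1+\beta)=(1-\eta)c<c$, that is $\tilde g$ is $c^{-}$-Hölder-$\alpha$, while $\|\tilde g-g\|_\infty\le(\eta+\beta)\|g-\bar g\|_\infty\to0$; choosing $\sigma$, then $\delta$, then $\eta$ small in turn makes $\|\tilde g-f\|_\infty$ as small as desired. Adding a small generic piecewise-affine perturbation (for instance $\tau\langle v,\cdot\rangle$ with $|v|,\tau$ small and $v$ in a generic direction, so that no affine piece of $\tilde g$ becomes constant) keeps the Hölder constant below $c$ and renders the function locally non-constant on $F$; verifying this last point is routine.

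The main obstacle is exactly the control of the Hölder constant in the interpolation step. Piecewise linear interpolation of a \emph{merely} $c$-Hölder-$\alpha$ function inflates the constant by a fixed shape-regularity factor $C_0>1$ --- as one already sees on a single equilateral triangle, where a function with vertex values $0,cs^\alpha,cs^\alpha$ forces interpolant ratio $c(2/\sqrt3)^\alpha>c$ --- and such a fixed inflation cannot be undone by a small sup-norm perturbation. The purpose of the mollification is to make the interpolation error \emph{second order}, so that the inflation degrades only to the factor $1+\beta(t)$ with $\beta(t)\to0$; this residual inflation, together with the passage from the closed bound $c$ to the strict bound $c^{-}$, is then removed by the inexpensive rescaling in the last step.
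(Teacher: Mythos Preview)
The paper does not actually prove this lemma here; it is quoted from the companion paper \cite{sier}, so there is no in-paper argument to compare your proposal against. Judged on its own, your construction --- McShane extension, mollification, piecewise-linear interpolation on a shape-regular mesh with $\delta\ll\sigma$, then a contraction towards the midrange --- is correct and is a clean way to produce a piecewise affine $c^{-}$-H\"older-$\alpha$ approximant. The key observation, that mollifying first makes the interpolation error second order so that the H\"older constant is inflated only by a factor $1+\beta(t)$ with $\beta(t)\to 0$ as $t=\delta/\sigma\to 0$, is exactly right and handles the obstacle you correctly identify in your last paragraph.

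There is, however, a small gap in the final step. You write that adding $\tau\langle v,\cdot\rangle$ for generic $v$ makes each affine piece non-constant and that this ``renders the function locally non-constant on $F$''. The first claim only needs you to avoid finitely many $v$'s, but it does \emph{not} by itself give the second: in dimension $p\ge 2$, if near an accumulation point $x_0$ the set $F$ happens to lie in a hyperplane orthogonal to the (nonzero) gradient on the simplex containing $x_0$, the perturbed function is still constant on $F$ near $x_0$. The desired conclusion is nevertheless true for generic $v$, for a different reason. Fix a countable base $\{U_n\}$ of open sets in $\mathbb{R}^p$; for each $n$ such that $U_n$ contains an accumulation point of $F$, choose two distinct points $y_0,y_1\in F\cap U_n$. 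If $\tilde g+\tau\langle v,\cdot\rangle$ is constant on $F\cap U_n$ then $\tau\langle v,y_1-y_0\rangle=\tilde g(y_0)-\tilde g(y_1)$, an affine equation on $v$. Hence the set of bad $v$'s is contained in a countable union of affine hyperplanes and is therefore null and meager; with this correction your argument is complete.
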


%%%%%%%%%%%%%%%%%%%%%%%%%%%%%%%%
%%%%%%%%%%%%%%%%%%%%%%%%%%%%%%%%
%%%%%%%%%%%%%%%%%%%%%%%%%%%%%%%%
%%%%%%%%%%%%%%%%%%%%%%%%%%%%%%%%

\begin{comment}

The following theorems  are also from  \cite{sier}. !!!Delete them???

%\input{sierthmgenfix}

\begin{theorem} \label{thm:generic_existence}
If $F\subset\R^p$ is compact, then there is a dense $G_\delta$ subset $\cag$ of $C_1^\alpha(F)$ such that for every $f\in\cag$ we have $D_*^f(F) = D_*(\alpha,F)$.
\end{theorem}

%%%%%%%%%%%%%%%%%%%%%%%%%%%%%%%%
%%%%%%%%%%%%%%%%%%%%%%%%%%%%%%%%
%%%%%%%%%%%%%%%%%%%%%%%%%%%%%%%%
%%%%%%%%%%%%%%%%%%%%%%%%%%%%%%%%

%%%%%%%%%%%%%%%%%%%%%%%%%%%%%%%%
%%%%%%%%%%%%%%%%%%%%%%%%%%%%%%%%
%%%%%%%%%%%%%%%%%%%%%%%%%%%%%%%%
%%%%%%%%%%%%%%%%%%%%%%%%%%%%%%%%

%\input{sierthdmon}

\begin{theorem} \label{thm:generic_monotonicity}
Suppose that $F\subset\R^p$ is compact. Then the function $D_*(\alpha,F)$ is monotone increasing in $\alpha$.
\end{theorem}

\end{comment}

%%%%%%%%%%%%%%%%%%%%%%%%%%%%%%%%
%%%%%%%%%%%%%%%%%%%%%%%%%%%%%%%%
%%%%%%%%%%%%%%%%%%%%%%%%%%%%%%%%
%%%%%%%%%%%%%%%%%%%%%%%%%%%%%%%%
%%%%%%%%%%%%%%%%%%%%%%%%%%%%%%%%
%%%%%%%%%%%%%%%%%%%%%%%%%%%%%%%%

%\input{siersecsier}

\section{Lower estimate for arbitrary functions on the Sierpi\'nski triangle}\label{*secsier}

In this section as an example we consider the Sierpi\'nski triangle, $ F=\DDD\subseteq\mathbb{R}^2$.  It is a connected self-similar set. Hence by a result of \cite{sier} $D_{*}(\aaa, \DDD)$ equals  the Hausdorff dimension 
of almost every level-set of  a generic 1-H\"older-$\aaa$ function.

  Some people prefer to work with different versions of the 
 Sierpi\'nski triangle. We work with the one which is obtained by starting with an equilateral triangle of side length one. 
Hence it satisfies our earlier assumptions about the fractals considered since its diameter 
 equals one.
 Its topological Hausdorff dimension equals one and this implies that
for  a  generic continuous function every level set is zero-dimensional, see  
\cite{BBEtoph}. The level sets of continuous functions are very flexible, and very ``compressible'', hence  during the proof of this theorem
one can capitalize on  the fact that
the Sierpi\'nski triangle is very ``thin'' near the vertices of the small triangles
appearing during its construction. 
 As H\"older-$\aaa$ functions do not have this flexibility, one can expect that their level sets {  generically} exhibit a different behaviour. 
In Theorem \ref{allexpthm} we obtain a lower bound 
for the Hausdorff dimension of
 almost every level set in the range of an arbitrary 1-H\"older-$\aaa$ function  defined on 
$\DDD$. 
This lower estimate is positive for $\aaa>0$.
In Theorem \ref{*thsierfb} we give an upper estimate for the Hausdorff dimension of almost every level set   of  the generic 1-H\"older-$\aaa$  function defined on 
$\DDD$, that is we estimate $D_{*}(\alpha, \Delta)$ from above. 
Both the lower and upper estimates tend to $0$ as $\aaa\to 0+0$.
Of course, it would be interesting to 
determine  the exact value of the function $D_{*}(\alpha, \Delta)$, but this seems to be quite difficult.

By its definition the  Sierpi\'nski triangle  is expressible as $\Delta=\bigcap_{n=0}^{\infty}\Delta_n$ where $\Delta_n$ is the union of the triangles appearing at the $n$th step of the construction. 
The set of triangles on the $n$th level is $\tau_n$. 
For $T\in\tau_n$ we denote by $V(T)$ the set of its vertices. 
Moreover, let ${\mathbf V}_{n}$ be the set of the points which are vertices of some $T\in\tau_{n}$, and their union is ${\mathbf V}  = \bigcup_{n=0}^{\infty} \mathbf V_{n} $. 
We are interested in the Hausdorff dimension of the level sets of a 1-H\"older-$\alpha$ function $f:\Delta\to\mathbb{R}$ for $0<\alpha\leq 1$.

\begin{figure}[ht]
\includegraphics[width=0.55\textwidth]{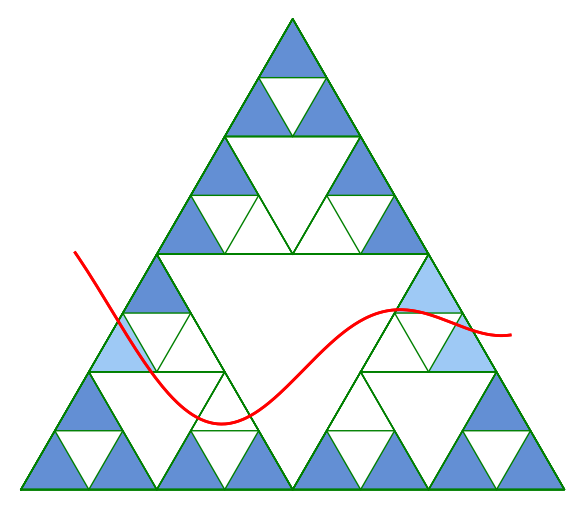}
\caption{Sierpi\'nski triangle and a crossing level set, shaded blue triangles are used during the definition of $\Delta^3$, the lighter shaded triangles correspond to the first level approximation $G^{3}_{1}(r)$ of the red level set at level $r$  \label{figlevel}}
\end{figure}

Suppose $l\in\N$.
It will be useful for us to define the self-similar set $\Delta^l\subseteq \Delta$ as well. 
 It  is induced by the similarities which map $\Delta_0$ to any triangle $T\in \tau_l$ on the boundary of $\Delta_0$. For example, $\Delta^1 = \Delta$, while the case $l=3$ is shown by Figure \ref{figlevel} where the shaded triangles on the sides of $\DDD_{0}$ are used in the definition of $\DDD^{3}$. (The lighter shaded triangles will have importance later.)

 One can easily check  that the number of triangles used in the construction of $\Delta^l$ is $3(2^l-1)$, and the $n$th level of $\Delta^l$ consists of certain triangles of $\tau_{nl}$. 
Let us denote the family of these triangles by $\tau_n^l$, the union of their vertices for fixed $n$ by ${\mathbf V}_n^l$, and the union of vertices for all the triangles in some $\tau_n^l$ by ${\mathbf V}^l=\bigcup_{n=1}^{\infty} {\mathbf V}_n^l$.  
It is clear that a 1-H\"older-$\alpha$ function $f:\Delta\to\mathbb{R}$ restricted to $\Delta^l$ is still a 1-H\"older-$\alpha$ function.

Suppose that $f:\Delta^l\to\mathbb{R}$ is a 1-H\"older-$\alpha$ function
and $r\not \in f({\mathbf V})$.
 We can define the $n$th approximation of $f^{-1}(r)$ denoted by $G_{n}^l(r)$ for any $n$ and $r\in f(\Delta^l)$ as the union of some triangles in $\tau_{n}^l$. 
More explicitly, $T\in\tau_{n}^l$ is taken into $G_{n}^l(r)$ if and only if $T$ has vertices $v$ and $v'$ such that $f(v)< r < f(v')$, that is, $r$ is in the 
interior of the convex hull $\conv (f(V(T)))$. 
The idea is that in this case $f^{-1}(r)$ necessarily intersects $T$.
On Figure \ref{figlevel} the level set corresponding to $f^{-1}(r)$ is the intersection of
 the red curve with $\DDD$. 
The set $G_{1}^3(r)$ consists of the light shaded triangles.
Now it is easy to check that,  using the notation $\conv$ for the convex hull, 
\begin{equation}
 \label{approxlevelunion} \conv (f(V(T)))\subseteq\bigcup_{T'\in\tau_{n+1}^l,\ T'\subseteq T}\conv (f(V(T'))),
\end{equation}
hence if $G_{n}^l(r)$ contains a triangle $T\in\tau_{n}^l$ then $G_{n+1}^l(r)$ contains a triangle $T'\in\tau_{n+1}^l$ such that $T'\subseteq T$. 
We introduce the following terminology: we say that $T'\in\tau_{n+k}^l$ is the $l,r$-descendant of $T\subseteq G_{n}^l(r)$ if there exists a sequence $T_0=T\supseteq T_1\supseteq...\supseteq T_k=T'$ of triangles such that $T_i\in\tau_{n+i}^l$ and $T_i\subseteq G_{n+i}^l(r)$ for $i=0,1,...,k$. 
We denote the set of $l,r$-descendants of $T$ by $\mathcal{D}_r^l(T)$.

\begin{comment}
We note that as $\Delta^l$ is compact and connected, $f(\Delta)$ equals a closed interval. 
Moreover, as $V^l$ is a dense subset of $\Delta$, we have that $f(V^l)$ is dense in $f(\Delta)$. 
It quickly implies that if $r\in f(\Delta)$, and it is not an endpoint of the range, then $G_{n}^l(r)$ is nonempty for large enough $n$. 
Notably, $f(\Delta)$ equals the union of all the $\conv(f(V(T)))$s except for the two endpoints potentially.
\end{comment}

Observe the obvious property that for any $T$ we can label the vertices in $V(T)$ such that $f(v_0)\leq f(v_2) \leq f(v_1)$. 
We refer to $v_0,v_1$ as the extreme vertices of $T$. 
Since we supposed that $r\in \inte\conv (f(V(T)))$ we have $f(v_{1})>f(v_{0})$.
If $f(v_{0})=f(v_{2})$ then we call only one vertex $v_{0}$  as  an extreme
vertex, the other vertex denoted by $v_{2}$ will not regarded to be an extreme vertex.
We proceed analogously if $f(v_{1})=f(v_{2})$.
  
We define the conductivity $\kappa_{n}^l(T)=\kappa_{n}^l(T,f)$ of any triangle $T\in\tau_{n}^l$ inductively (as $f$ is fixed during most of our arguments, it will be omitted from the notation unless it might cause ambiguity).
 If $n=0$, we define $\kappa_0^l(T)=1$. 
On the other hand, if $n\geq{1}$, there is a unique triangle $T'\in\tau_{n-1}^l$ such that $T\subseteq T'$. 
Now if $T$ is one of the two triangles at an extreme vertex of $T'$, then let $\kappa_{n}^l(T)=\kappa_{n-1}^l(T')$ (in this case we say that $T$ is an extreme triangle of $T'$), while in any other case we let $\kappa_{n}^l(T)=\frac{1}{2}\kappa_{n-1}^l(T')$. 
 The following lemma 
%can be proven, which 
can be thought of  as the weak  conservation of conductivity:

%%%%%%%%%%%%%%%%%%%%%%%%%%%%%%%%
%%%%%%%%%%%%%%%%%%%%%%%%%%%%%%%%
%%%%%%%%%%%%%%%%%%%%%%%%%%%%%%%%

\begin{lemma}
\label{approxlevelconductivity}
Assume that $T\in G_{n}^l(r)$ and $k\geq 1$. 
Then we have
\begin{displaymath}
\sum_{T'\in \tau_{n+k}^l\cap\mathcal{D}_{r}^{l}(T)}\kappa_{n+k}^l(T')\geq{\kappa_{n}^l(T)}.
\end{displaymath}
\end{lemma}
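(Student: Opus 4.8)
The plan is to reduce everything to the single-step case $k=1$ and then obtain the general statement by a short telescoping argument. For $j\geq 0$ set $A_j=\sum_{T'\in\tau_{n+j}^l\cap\mathcal{D}_r^l(T)}\kappa_{n+j}^l(T')$, so that $A_0=\kappa_n^l(T)$ and the assertion is $A_k\geq A_0$. Every level-$(n{+}j{+}1)$ descendant of $T$ lies in a unique level-$(n{+}j)$ descendant $T'$ of $T$, and for such a $T'$ its level-$(n{+}j{+}1)$ descendants of $T$ are precisely the active children of $T'$ (being active and inside $T'$, they automatically inherit a descent chain, and conversely). Hence, grouping $A_{j+1}$ according to the level-$(n{+}j)$ ancestor and applying the $k=1$ inequality to each active $T'\in\tau_{n+j}^l\cap\mathcal{D}_r^l(T)$ gives $A_{j+1}\geq\sum_{T'}\kappa_{n+j}^l(T')=A_j$; chaining these inequalities yields $A_k\geq A_0$. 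So the real content is the case $k=1$.

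For the base case, fix an active $T\in\tau_n^l$ with extreme vertices $v_0,v_1$ (so $f(v_0)<r<f(v_1)$) and third vertex $v_2$. The first geometric step is to record that the $3(2^l-1)$ triangles of $\tau_{n+1}^l$ contained in $T$ form a single cyclic chain running around the boundary of $T$: consecutive triangles meet in exactly one common vertex, and three of them are the corner triangles $C_0,C_1,C_2$ sitting at $v_0,v_1,v_2$. By the conductivity rule, exactly $C_0$ and $C_1$ are extreme, each carrying weight $\kappa_n^l(T)$, while every other triangle of the chain carries weight $\tfrac12\kappa_n^l(T)$. Deleting $C_0$ and $C_1$ splits the chain into two arcs $P$ and $Q$ (with $C_2\in Q$, say), and the goal becomes to show that the active triangles contained in $T$ have total weight at least $\kappa_n^l(T)$.

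The key point is that each arc contains an active triangle. Since $r\notin f(\mathbf V)$, the sign of $f-r$ is well defined at $v_0$, at the successive shared vertices along $P$, and at $v_1$; this sequence starts negative and ends positive, and any two consecutive entries are vertices of a common triangle of the chain, so at the first sign change that triangle has vertices on both sides of $r$ and therefore lies in $G_{n+1}^l(r)$. The same holds for $Q$. To finish, I would bound the total weight: if the two active triangles produced for $P$ and $Q$ are distinct, each contributes at least $\tfrac12\kappa_n^l(T)$, summing to at least $\kappa_n^l(T)$; if they coincide, the common triangle is forced to be one of the shared endpoints $C_0,C_1$, hence an extreme triangle of weight $\kappa_n^l(T)$, again giving the bound. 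This establishes $\sum_{T'\in\tau_{n+1}^l,\,T'\subseteq T}\kappa_{n+1}^l(T')\geq\kappa_n^l(T)$.

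I expect the main obstacle to be the base case, and within it the two geometric facts that must be made precise: that $\tau_{n+1}^l$ restricted to $T$ is genuinely a cycle (a ``necklace'') with the three corner triangles as its only corners, and that the active triangles cut \emph{both} arcs with the correct weight accounting. The chosen weighting (extreme $=1$, ordinary $=\tfrac12$) is exactly what makes every such cut have total weight at least one, which is the min-cut interpretation underlying the ``weak conservation of conductivity'' heading.
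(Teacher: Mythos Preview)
Your proof is correct and follows the same strategy as the paper: reduce to $k=1$ by induction, then show that the active children of $T$ either include an extreme corner triangle (with conductivity $\kappa_n^l(T)$) or at least two distinct ones (each with conductivity at least $\tfrac12\kappa_n^l(T)$). The paper phrases the $k=1$ step in terms of where $f^{-1}(r)$ meets the two edges of $T$ joining an extremal vertex, whereas you track sign changes of $f-r$ along the shared vertices of the subtriangle cycle on the two arcs between $C_0$ and $C_1$; your version spells out the mechanism more explicitly, but the argument is essentially the same.
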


\begin{proof}
By induction, it suffices to work with $k=1$. 
%, and due to similarity considerations, we might assume $n=0$. 
Consider the vertices $\nu_{0}$ and $\nu_{1}$ on which $f$ is minimal and maximal respectively in $V(T)$.
  Since $r\not\in f({\mathbf V})$ and $f(\nu_{0})<r<f(\nu_{1})$
there are at least two edges of $T$ containing points of $f^{-1}(r)$.
One of them is the one connecting  $\nu_0$ and $\nu_{1}$.

If there is a $T'\in \tau_{n+1}^l$ which contains all the intersection
points of the edges of $T$ and $f^{-1}(r)$ then it should contain $\nu_{0}$, or $\nu_{1}$. Hence, it is an extreme triangle of $T$ and the conductivity of $T'$
equals that of $T$.

Otherwise we have at least two triangles of  $G_{n+1}^l(r)$ which are in $T$
and the sum of their conductivity is at least the conductivity of $T$.
\begin{comment}
If $f$ takes its maximum or minimum on $V(T)$ at more than one   vertex  then we select the one which was considered as an extreme vertex. 
Now it is simple and elementary to check that there exists a path $v_0=v_{\max}, v_1,...,v_m=v_{\min}$ with $v_i\in {\mathbf V}_1^l $ along the edges of triangles in $\tau_{1}^l$ such that it does not use edges of $T'$. 
Consequently, 
\[ [f(v_{\min}),f(v_{\max})]\subseteq\bigcup_{i=0}^{m-1}[f(v_i),f(v_{i+1})].\]
Thus we have $r\in[f(v_i),f(v_{i+1})]$ for some $i=0,1,...,m-1$, which yields that the edge connecting $v_i,v_{i+1}$ bounds a triangle $T''\in\tau_1^l$ with $r\in \inte\conv (f(V(T'')))$. 
Hence $T''$ is also a triangle in $G_1^l(r)$. 
Moreover, $T''\neq T'$. 
 Therefore,  as both of them  have  conductivity at least $\frac{1}{2}$, we obtain the statement of the lemma.
 \end{comment}
\end{proof}

Now we have enough tools to turn our attention to the Hausdorff dimension of the level sets of a 1-H\"older-$\alpha$ function $f:\Delta\to\mathbb{R}$.

%%%%%%%%%%%%%%%%%%%%%%%%%%%%%%%%
%%%%%%%%%%%%%%%%%%%%%%%%%%%%%%%%
%%%%%%%%%%%%%%%%%%%%%%%%%%%%%%%%
%%%%%%%%%%%%%%%%%%%%%%%%%%%%%%%%

\begin{theorem}
\label{allexpthm}
Assume that $f:\Delta\to\mathbb{R}$ is a 1-H\"older-$\alpha$ function for some $0<\alpha\leq 1$. 
Then for Lebesgue almost every $r\in f(\Delta)$ we have
\begin{equation}\label{*expe}
\dim_H(f^{-1}(r))\geq \frac{\frac{\alpha}{2}}{1+\frac{1+\log{\frac{3}{\alpha}}}{\log 2}+\frac{2}{\alpha}}> 0.
\end{equation}
\end{theorem}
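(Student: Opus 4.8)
The plan is to fix a scale parameter $l\in\N$ (to be optimised only at the very end) and to prove the estimate for the restriction $f|_{\DDD^l}$. Working inside the self-similar subset $\DDD^l$ can only help, since $f|_{\DDD^l}$ is again $1$-H\"older-$\alpha$ and $f^{-1}(r)\cap\DDD^l\sse f^{-1}(r)$; the passage from almost every $r\in f(\DDD^l)$ to almost every $r\in f(\DDD)$ is then handled by running the same construction on each of the finitely many scaled copies of $\DDD$ tiling $\DDD$, whose $f$-ranges cover $f(\DDD)$. Throughout I discard the null set $f(\mathbf V)$, so that $r\notin f(\mathbf V)$. The whole idea is to convert the weak conservation of conductivity (Lemma \ref{approxlevelconductivity}) into a mass distribution carried by the level set and then invoke the Mass Distribution Principle (Theorem \ref{thMDP}).

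\emph{Construction of the measure.} For $r\notin f(\mathbf V)$ I would send a unit mass down the tree of triangles $\{T\in\tau_n^l:T\in G_n^l(r)\}$, splitting the mass on a triangle $T\in G_n^l(r)$ among its children in $G_{n+1}^l(r)$ in proportion to their conductivities $\kappa_{n+1}^l(\cdot)$. Lemma \ref{approxlevelconductivity} (with $k=1$) guarantees that these children's conductivities sum to at least $\kappa_n^l(T)$, so the splitting is well defined and consistent, and in the limit it produces a Borel probability measure $\mu_r$ carried by $\bigcap_n\bigcup_{T\in G_n^l(r)}T$, which lies in $f^{-1}(r)$ by continuity of $f$. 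The decisive observation is that $\kappa_n^l(T)=\kappa_n^l(T,f)$ does \emph{not} depend on $r$: it equals $2^{-h(T)}$, where $h(T)$ counts the non-extreme steps along the descent from $\DDD_0$ to $T$ (each extreme step costing a factor $1$, each non-extreme step a factor $\tfrac12$). Telescoping the splitting fractions and bounding each denominator below by conservation gives the clean, $r$-independent estimate
\[
\mu_r(T)\ \le\ \kappa_n^l(T)\ =\ 2^{-h(T)}\qquad(T\in G_n^l(r)).
\]
Since $|T|=2^{-nl}$, the Mass Distribution Principle will yield $\dim_H(f^{-1}(r))\ge s$ once I know that $\mu_r$ is carried by triangles with $h(T)\ge nls$, i.e. that the density of non-extreme steps stays above $ls$.

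\emph{Controlling the extreme steps for almost every $r$.} This is the heart of the matter. Call $T\in\tau_n^l$ \emph{bad} if $h(T)<\sigma n$ with threshold $\sigma:=ls$. Each triangle has $3(2^l-1)$ children in $\DDD^l$, of which the extreme ones are the corner triangles at the at most two extreme vertices, so the number of level-$n$ triangles with exactly $m=h(T)$ non-extreme steps is at most $\binom nm\,2^{\,n-m}(3\cdot 2^{l})^{m}$. By the H\"older hypothesis each $T$ contributes an $r$-interval $\conv(f(V(T)))$ of length at most $2^{-nl\alpha}$, whence
\[
\int_{\R}\mu_r\Big(\bigcup_{\substack{T\in\tau_n^l\\ T\ \mathrm{bad}}}T\Big)\,dr\ \le\ \sum_{T\ \mathrm{bad}}\kappa_n^l(T)\,\big|\conv(f(V(T)))\big|\ \le\ 2^{-nl\alpha}\sum_{m<\sigma n}\binom nm 2^{\,n-m}(3\cdot 2^{l-1})^{m}.
\]
Estimating the sum by its largest term (an entropy bound dominated by $m\approx\sigma n$) produces a factor $2^{\,n(1+H(\sigma)+\sigma\log_2(3\cdot 2^{l-2}))}$, where $H$ is the binary entropy function, so the whole expression decays geometrically and is summable in $n$ precisely when $1+H(\sigma)+\sigma\log_2(3\cdot 2^{l-2})<l\alpha$. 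When this holds, Fubini together with a Borel--Cantelli argument shows that for almost every $r$ one has $\mu_r\big(\limsup_n\{\text{bad at level }n\}\big)=0$; restricting $\mu_r$ to the full-measure set of points that eventually avoid bad triangles yields a nonzero mass distribution with $\mu_r(T)\le 2^{-h(T)}\le 2^{-nls}=|T|^{s}$ on all relevant triangles, so Theorem \ref{thMDP} gives $\dim_H(f^{-1}(r))\ge s=\sigma/l$.

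\emph{Optimisation and the main obstacle.} It remains to choose $l$ and $\sigma$ so as to maximise $s=\sigma/l$ subject to $1+H(\sigma)+\sigma\log_2(3\cdot 2^{l-2})<l\alpha$; balancing the linear-in-$l$ contribution against $l\alpha$ forces the choice $l\approx 2/\alpha$ (for which $s\sim\alpha^2/4$ as $\alpha\to0$), and carrying out the computation sharply reproduces the stated value $\frac{\alpha/2}{1+\frac{1+\log(3/\alpha)}{\log 2}+\frac2\alpha}$, which is manifestly positive for every $\alpha>0$. The hard part will be exactly this quantitative control of the extreme steps: the bound $\mu_r(T)\le\kappa_n^l(T)$ is worthless on all-extreme triangles (there it degenerates to $\mu_r(T)\le 1$ while $|T|\to0$), so everything hinges on showing, through the integration over $r$ above, that for almost every level these heavy triangles carry asymptotically no mass, together with the delicate bookkeeping in the counting estimate needed to obtain the precise exponent rather than merely a positive one.
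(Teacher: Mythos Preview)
Your proposal is correct and follows essentially the same route as the paper: restrict to $\DDD^l$, build a probability measure on $f^{-1}(r)$ by distributing mass down the tree of $r$-hit triangles in proportion to the conductivities $\kappa_n^l$, use the inductive bound $\mu_r(T)\le\kappa_n^l(T)$, count ``bad'' (high-conductivity) triangles, run a Borel--Cantelli argument, apply the Mass Distribution Principle, and finally optimise over $l$ and the threshold parameter. All of the key ingredients match.

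There is one minor but genuine variation worth noting. The paper controls the Lebesgue measure of the set of $r$'s that fall into the image of \emph{any} well-conducting triangle at level $n$, and then Borel--Cantelli gives that for almost every $r$ no bad triangle is ever hit from some level on. You instead integrate $\mu_r(\text{bad at level }n)$ over $r$, use Fubini, and conclude only that the bad limsup set is $\mu_r$-null; you then restrict $\mu_r$. Your route picks up the extra factor $\kappa_n^l(T)=2^{-m}$ inside the sum, which slightly sharpens the summability condition (this is why your exponent has $3\cdot 2^{l-2}$ rather than the paper's $3\cdot 2^{l-1}$), at the cost of a small measurability check for $r\mapsto\mu_r(T)$ (piecewise constant in $r$, so fine). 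Either version comfortably yields the stated lower bound; the paper's version avoids the restriction step, while yours gives a marginally better constant that you do not actually exploit.
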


\begin{proof}[Proof of Theorem \ref{allexpthm}]

Since $\DDD$ is compact and connected $f(\Delta)$ is a closed interval.
Moreover as ${\mathbf V}$ is a countable and dense subset of $\DDD$ the set 
$f({\mathbf V})$ is countable and dense in $f(\DDD)$.
Suppose that $r\in \intt( f(\DDD))\sm f({\mathbf V})$.
Then we can find $T\in \bigcup_n \tau_n$ 
 such that $r\in \inte \conv (f(V(T)))$. 
Due to self-similarity properties, we can assume $T=\Delta_0$.

\begin{comment}
We recall that $f(\Delta)$ is a closed interval, which almost everywhere equals the union of the countably many convex hulls $\conv (f(V(T)))$ as $T$ runs over the elements of $\bigcup_n \tau_n$, notably only the endpoints of $f(\Delta)$ might be exceptional points. 
As it is a countable union, it is sufficient to prove that for given $T\in \bigcup_n \tau_n$ for almost every $r\in \conv (f(V(T)))$ we have the claimed bound on the dimension. 
Due to self-similarity properties, we can focus on $\Delta_0$ instead of considering an arbitrary $T$. 
\end{comment}

Restrict $f$ to some $\Delta^l$. 
The number $l$ will be fixed later, it is useful to think of it as something large. 
Roughly speaking, in order to bound the dimension, we would like to obtain that for Lebesgue almost every $r\in f(\Delta)$ we have that $f^{-1}(r)$ does not intersect triangles with high conductivity on the $n$th level for large $n$. 
Consequently, by Lemma \ref{approxlevelconductivity} we could deduce that $f^{-1}(r)$ intersects ``many'' triangles, which yields ``high'' Hausdorff dimension due to the Mass Distribution Principle (Theorem \ref{thMDP}). 
In order to formalize this idea, we would like to estimate the number of triangles with high conductivity. 

For any $T\in \tau_{n}^l$ we can consider the chain of triangles $T_1, T_2,\ldots, T_n$ such that $T_i\supseteq T$ and $T_i\in\tau_{i}^l$. 
We bound from above the number of triangles in $\tau_{n}^l$, whose conductivity is at least $2^{-nd_1}$, where $0<d_1\le\frac12$ is chosen to be a small rational number,
 hence $nd_1$ is an integer for infinitely many $n$. From this point on we restrict our arguments to such $n$s,  that is we suppose that $n=n'\mathbf{q}$  for some $n'= 1, 2, ...$ where $\mathbf{q}=\min\{m\in\N : md_1\in\N\}$.
 The conductivity is at least $2^{-nd_1}$ if $T_i$ is an extreme triangle for at least $n- nd_1 $ of the indices $i=1, 2, ..., n$. 

The number of such triangles is estimated from above by
\begin{displaymath}
\binom{n}{n-nd_1}\left(3\left(2^l-1\right)\right)^{nd_1} 2^{n-d_1n}=
\binom{n}{nd_1}\left(3\left(2^l-1\right)\right)^{nd_1} 2^{n-d_1n},
\end{displaymath} 
as we can choose the $n-nd_1$ places where we use one of the two extreme triangles, and in the remaining places we allow the usage of any of the $3\left(2^l-1\right)$ triangles, hence giving an upper bound.
By standard bounds on binomial coefficients, this can be estimated from above by
\begin{equation}
\label{countwellconduct}
\left(\frac{en}{nd_1}\right)^{nd_1}\left(3\left(2^l-1\right)\right)^{nd_1}2^{n-d_1n}.
\end{equation}
The diameter of the triangles in $\tau_{n}^l$ is $2^{-ln}$. 
Consequently, due to $f$ being 1-H\"older-$\alpha$ and by (\ref{countwellconduct}), we know that the $f$-image of the union of the well conducting triangles has 
Lebesgue measure at most
\begin{equation}
\label{imagewellconduct}
\left(\frac{e}{d_1}\right)^{nd_1}\left(3\left(2^l-1\right)\right)^{nd_1}2^{n-d_1n}2^{-ln\alpha} =  \left(\left(\frac{e}{d_1}\right)^{d_1}\left(3\left(2^l-1\right)\right)^{d_1}2^{1-d_1-l\alpha}\right)^n  =: c^n. 
\end{equation}
Assume that  $c<1$. 
Then the corresponding series is convergent, hence we can apply the Borel--Cantelli lemma to deduce that almost every $r\in \conv(f(V(T))$ appears in the image of well conducting triangles only on finitely many levels. 
Consequently, for almost every $r$, if $n$ is large enough, $f^{-1}(r)$ must intersect at least $2^{nd_1}$ triangles of $\tau_n^l$, as the sum of the conductivities of triangles in $T\in\tau_{n}^l$ for which $r\in \conv (f(V(T)))$, is at least 1. We will use this observation paired with the Mass Distribution Principle,
Theorem \ref{thMDP}
to give a lower bound on the dimension of almost every level set, but first, let us consider the question how to choose $l,d_1$ in order to guarantee that $c<1$. Elaborating (\ref{imagewellconduct}), we would like to assure 
\begin{equation}
\label{imagewellconductineq}
\left(\frac{e}{d_1}\right)^{d_1}\left(3\left(2^l-1\right)\right)^{d_1}2^{1-d_1}2^{-l\alpha}<1.
\end{equation}
If this inequality holds for $2^l$ instead of $2^l-1$, that is still fine for our purposes. 
Rewriting our powers in base $e$, it leads to
\begin{displaymath}
\exp\left(d_1 - d_1\log{d_1} + d_1\log{3}+d_1l\log{2}+\log{2}-d_1\log{2}-\alpha l\log{2}\right)<1,
\end{displaymath}
that is after taking logarithm
\begin{displaymath}
d_1(1-\log{d_1} +\log 3 - \log 2) + \log 2 +l  (d_1- \alpha) \log 2  <0.
\end{displaymath}
We clearly need $d_1<\alpha$ to satisfy this inequality, as only the third term can be negative. 
Fixing this assumption, after rearrangement we obtain that it holds if and only if
\begin{equation}
\label{lcondition}
\frac{d_1(1-\log{d_1}+\log{3}-\log{2})+\log 2}{(\alpha-d_1)\log 2} = \frac{d_1(1+\log{\frac{3}{2d_1}})+\log 2}{(\alpha-d_1)\log 2}<l.
\end{equation}
No matter how we fix the rational number $0<d_1<\alpha$, such an $l$ implies $c<1$. 
We notice that $d_1$ can be chosen arbitrarily close to $\frac{\alpha}{2}$, and due to the continuity of the left hand side of  (\ref{lcondition}), if they are sufficiently close to each other, then we can choose $l$ so that
\begin{equation}
\label{lchoice}
\frac{\frac{\alpha}{2}(1+\log{\frac{3}{\alpha}})+\log 2}{\frac{\alpha}{2}\log 2}<l\leq 1+\frac{\frac{\alpha}{2}(1+\log{\frac{3}{\alpha}})+\log 2}{\frac{\alpha}{2}\log 2}.
\end{equation}

We recall that for such $l,d_1$ we have that for almost every $r$, if $n$ is large enough, $f^{-1}(r)$ can only intersect triangles of $\tau_n^l$ with conductivity smaller than $2^{-nd_1}$. 
Fix such an $r$ and consider only such large enough $n$s. 
We define a probability measure $\mu$ on $\Delta^l$. 

Due to Kolmogorov's extension theorem (see for example \cite{Oksendal}, \cite{Taomeas} or \cite{Lamperti}) it suffices to define consistently $\mu(T\cap\Delta^l)$ for any triangle $T$ in $\tau_n^l$. 
First, if $T$ is not an $l,r$-descendant of $\Delta_0$, let $\mu(T\cap\Delta^l)=0$. 
For descendants, we proceed by recursion. 
Notably, if $T$ is an $l,r$-descendant in $\tau_{n}^l$, and $\mu(T\cap\Delta)$ is already defined, then we divide its measure among its $l,r$-descendants in $\tau_{n+1}^l$ proportionally to their conductivity. 
More explicitly, for an $l,r$-descendant $T^*\in\tau_{n+1}^l$ of $T$ we define
\[\mu(T^*\cap\Delta^l)=\mu(T\cap\Delta^l)\frac{\kappa_{n+1}^l(T^*)}{\sum_{T'\in \tau_{n+1}^l,\ T' \text{ is an $l,r$-descendant of } T}\kappa_{n+1}^l(T')}.\]
Then  using Lemma \ref{approxlevelconductivity}  by induction it is clear that 
\[\mu(T\cap\Delta^l)\leq\kappa_{n}^l(T)
\text{  for any $l,r$-descendant $\Delta_0$.}
\]
 
Hence,
\begin{equation}
\label{gentrianglemeasure}
\mu(T\cap\Delta^l)\leq 2^{-nd_1}.
\end{equation} 
Next we want to use the Mass Distribution Principle. Recall that we assumed
that we work with $n$s of the form $n'\mathbf{q}$. 
Now assume that we have a Borel set $U\in\Delta^l$ such that for its diameter we have $2^{-n'\mathbf{q}l}\leq |U| \leq 2^{-(n'-1)\mathbf{q}l}$. 
By a simple geometric argument one can show that $U$ might intersect at most $C$ triangles in $\tau_{n'\mathbf{q}}^l$ for some constant $C$ not depending on $n'$. 
(One can consider the triangular lattice formed by triangles with side length $2^{-ln'\mathbf{q}}$ and it is easy to see that a 
Borel set with diameter $2^{-(n'-1)\mathbf{q}l}$ can intersect only a limited number of the triangles.) Consequently, the number of $l,r$-descendants of $\Delta_0$ in $\tau_{n'\mathbf{q}}^l$  intersected by $U$ is also bounded by $C$. 
For such an $l,r$-descendant $T$ we can apply (\ref{gentrianglemeasure}), hence
\[\mu(U)\leq 2^{-n'\mathbf{q}d_1}C.\]
As $|U|\geq{2^{-n'\mathbf{q}l}}$, the mass distribution principle tells us that if there exists $C',\ s>0$ independent of $n'$ with
\[2^{-n'\mathbf{q}d_1}C\leq \left(2^{-n'\mathbf{q}l}\right)^s C',\]
then $s\leq \dim_H(f^{-1}(r))$. 
Such a $C'$ exists if and only if
\[s\leq \frac{d_1}{l}.\]
Hence the expression on the right hand side of this inequality is a good choice for $s$ in the mass distribution principle, thus it is a lower estimate for $\dim_H(f^{-1}(r))$ for any valid pair $l,d_1$. 
Using (\ref{lchoice}) and the argument leading to it, we can approximate $\frac{\alpha}{2}$ by possible $d_1$s and for sufficiently good approximations we can use
\begin{displaymath}
l\leq 1+\frac{\frac{\alpha}{2}(1+\log{\frac{3}{\alpha}})+\log 2}{\frac{\alpha}{2}\log 2}=1+\frac{1+\log{\frac{3}{\alpha}}}{\log 2}+\frac{2}{\alpha}.
\end{displaymath}
Consequently,
\[\dim_H(f^{-1}(r))\geq \frac{\frac{\alpha}{2}}{1+\frac{1+\log{\frac{3}{\alpha}}}{\log 2}+\frac{2}{\alpha}}>0.\]
\end{proof}

\section{Upper estimate of $D_{*}(\alpha, \Delta)$ for generic functions}\label{*secsierb}

\begin{definition}\label{*pafsier}
We say that $f:\DDD\to \R$ is a {\it piecewise affine function at level $n\in \N$
on the Sierpi\'nski triangle} if it is affine on  any $T\in\tttt_{n}$.

If a piecewise affine function at level $n\in \N$ on the Sierpi\'nski triangle  satisfies the property that for any $T\in\tttt_{n}$ one can always find two vertices of $T$
where $f$ takes the same value, then we say that $f$ is a {\it standard piecewise affine function at level $n\in \N$
on the Sierpi\'nski triangle}.

A function $f:\DDD\to \R$ is a {\it strongly piecewise affine function
on the Sierpi\'nski triangle} if there is an $n\in \N$ such that it is a piecewise affine function at level $n$.
\end{definition}

Here we state a specialized version of Lemma \ref{piecewise_affine_approx}
valid for the Sierpi\'nski triangle.

\begin{lemma} \label{piecewise_affine_approxsier}
Assume that $0<\alpha<1$, and $0<c$ are fixed. 
Then the locally non-constant standard strongly piecewise affine $c^{-}$-H\"older-$\alpha$ functions defined on $\DDD$ form a dense subset of the $c$-H\"older-$\alpha$ functions.
\end{lemma}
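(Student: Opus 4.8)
The plan is to deduce this specialized statement from the general approximation result, Lemma \ref{piecewise_affine_approx}, by upgrading an arbitrary piecewise affine approximant into one that is, in addition, affine on the level-$n$ triangles of $\Delta$ and \emph{standard}, i.e. carries two equal values on the vertices of every such triangle. Fix a target $g\in C^\alpha_c(\Delta)$ and $\varepsilon>0$. First I would apply Lemma \ref{piecewise_affine_approx} to obtain a locally non-constant piecewise affine function $f_1\in C^\alpha_{c'}(\Delta)$ with $c'<c$ and $\|f_1-g\|_\infty<\varepsilon/2$. The decisive feature I shall exploit is that a piecewise affine function (with finitely many affine pieces) is Lipschitz: $f_1$ is $L$-Lipschitz for some finite $L=L(f_1)$. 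Consequently, on a level-$N$ triangle $T\in\tau_N$, which has diameter $2^{-N}$, the three vertex values of $f_1$ lie within $L\,2^{-N}$ of one another, an estimate of order $2^{-N}$ that is far smaller than the H\"older-critical order $2^{-N\alpha}$.

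Next I would construct the standard approximant $f_2$ at level $N+1$, for $N$ large. Keep $f_2=f_1$ on the vertex set $\mathbf V_N$, so that neighbouring level-$N$ triangles, which meet only at such vertices by finite ramification, still match; and on each $T\in\tau_N$ assign to all three edge-midpoints a common value $t_T$ equal to the midrange of the three values of $f_1$ on $V(T)$, shifted by an arbitrarily small amount so that $t_T\notin f_1(V(T))$. Interpolating affinely on the three corner subtriangles of $T$ yields, on each level-$(N+1)$ triangle, an affine function two of whose vertex values (the two midpoints) agree; hence $f_2$ is a standard strongly piecewise affine function. Each piece is a non-constant affine map, since $t_T$ differs from every corner value, so $f_2$ is locally non-constant. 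As all of $t_T$ and the corner values lie within $O(L\,2^{-N})$ of one another and of $f_1$ on $T$, we obtain $\|f_2-f_1\|_\infty=O(L\,2^{-N})<\varepsilon/2$ once $N$ is large, whence $\|f_2-g\|_\infty<\varepsilon$.

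The heart of the matter is to verify that $f_2\in C^\alpha_{c^-}$, and I would bound the H\"older quotient $|f_2(x)-f_2(y)|/|x-y|^\alpha$ by splitting according to scale. Let $\rho_0>0$ be a separation constant coming from the self-similar, finitely ramified structure of $\Delta$, so that any two level-$N$ triangles which do not share a vertex are at distance at least $\rho_0 2^{-N}$. For \emph{far} pairs, $|x-y|\ge\rho_0 2^{-N}$, I would compare with $f_1$: the quotient is at most $c'+2\|f_2-f_1\|_\infty/(\rho_0 2^{-N})^\alpha=c'+O(2^{-N(1-\alpha)})$, which stays below $c$ for large $N$ because $c'<c$ strictly. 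For \emph{near} pairs, $|x-y|<\rho_0 2^{-N}$, the points lie in one level-$N$ triangle or in two sharing a vertex; here every level-$(N+1)$ piece of $f_2$ has gradient at most $\tfrac{2}{\sqrt3}L$ (the drop $|t_T-\text{corner}|\le \tfrac12 L\,2^{-N}$ divided by the subtriangle height $\tfrac{\sqrt3}{2}2^{-(N+1)}$), a bound \emph{independent of $N$}. Using that on a single level-$N$ triangle the intrinsic path metric of $\Delta$ is comparable to the Euclidean one up to a scale-independent constant $\kappa$, I get $|f_2(x)-f_2(y)|\le \kappa\tfrac{2}{\sqrt3}L\,|x-y|$, so the quotient is at most $\kappa\tfrac{2}{\sqrt3}L\,|x-y|^{1-\alpha}\le \kappa\tfrac{2}{\sqrt3}L\,(\rho_0 2^{-N})^{1-\alpha}\to 0$. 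Thus for $N$ large both regimes give a quotient below $c$, so $f_2$ is $c''$-H\"older-$\alpha$ for some $c''<c$, completing the proof.

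I expect the near-scale H\"older control to be the main obstacle. The reason is that standardization is an intrinsically \emph{rough} operation: carried out on a single triangle it inflates the local H\"older constant by the scale-invariant factor $2^\alpha/\sqrt3$, which exceeds $1$ for $\alpha$ close to $1$, so one cannot hope to stay inside $C^\alpha_{c^-}$ by standardizing a mere H\"older function of constant near $c$. The resolution, and the reason the lemma is phrased with the Lipschitz-rich class of piecewise affine functions as the starting object, is that the Lipschitz bound makes the oscillation on a level-$N$ triangle of order $2^{-N}$ instead of $2^{-N\alpha}$; the inflated small-scale term then carries a gain $2^{-N(1-\alpha)}\to 0$ and is absorbed, leaving the global H\"older constant governed by the coarse scale, where $f_2\approx f_1$ still has constant $c'<c$.
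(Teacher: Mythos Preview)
Your proposal is correct and follows essentially the same route as the paper: start from a Lipschitz $c^{-}$-H\"older-$\alpha$ piecewise affine approximant, subdivide one level deeper, fix the level-$N$ vertex values, and choose midpoint values to force the standard property, then verify $c^{-}$-H\"older-$\alpha$ via a far/near dichotomy exploiting the $2^{-N(1-\alpha)}$ gain from Lipschitzness.

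The differences are cosmetic. The paper assigns to each edge-midpoint the value of one of the corners (so $f_2(v_4)=f_1(v_1)$, $f_2(v_5)=f_1(v_2)$, $f_2(v_6)=f_1(v_3)$), while you set all three midpoints to a common shifted midrange $t_T$; both choices give the standard property and identical gradient bounds up to constants. The paper also factors the far/near H\"older verification into a separate gluing lemma (Lemma~\ref{*lemsiergf}), which it reuses later in Theorem~\ref{*thsierfb}, whereas you carry it out inline; and the paper first upgrades $f_1$ to be affine on each $\tau_n$ triangle before standardizing, while you observe that mere Lipschitzness of $f_1$ already suffices, which is a slight streamlining.
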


Before proving this lemma we need to state and prove another one.

 Recall that  ${\mathbf V}_{n}=\bigcup_{T\in\tau_n} V(T)$. 

\begin{lemma}\label{*lemsiergf}
Suppose, $0<\eps$, $0<\aaa<1$, $0<c$, $f:\DDD\to\R$ is Lipschitz-$M$ and
$c^{-}$-H\"older-$\alpha$ on $\DDD$. Then there exists $N\in\N$
 such that for any fixed $n\geq N$
 if for any $T\in\tau_n$
 $g$ is $c/8$-H\"older-$\aaa$ on $T\cap\DDD$ and 
$g(x)=f(x)$ for all $x\in {\mathbf V}_{n}$
then
\begin{equation}\label{*eqsiergf}
||g-f||_{\oo}<\eee\text{ and $g$ is $c^{-}$-H\"older-$\alpha$ in $\DDD$}.
\end{equation}
\end{lemma}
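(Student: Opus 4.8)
The plan is to prove the two claims of the lemma separately, exploiting throughout that every $x\in\DDD$ lies in some $T\in\tau_n$ and hence within distance $|T|=2^{-n}$ of a vertex of $V(T)\subseteq\mathbf V_n$, where $g$ and $f$ agree. Write $c'<c$ for a constant with $f\in C^{\aaa}_{c'}$ (available from the $c^{-}$-H\"older hypothesis), and let $M$ be the Lipschitz constant of $f$. For the sup-norm bound, fix $x\in\DDD$, choose $T\in\tau_n$ containing $x$ and a vertex $v\in V(T)$, so $|x-v|\le 2^{-n}$ and $g(v)=f(v)$; then $|g(x)-f(x)|\le|g(x)-g(v)|+|f(v)-f(x)|\le(\tfrac c8+c')2^{-n\aaa}$, using that $g$ is $c/8$-H\"older-$\aaa$ on $T\cap\DDD$ and $f$ is $c'$-H\"older-$\aaa$. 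Choosing $N$ so that $(\tfrac c8+c')2^{-N\aaa}<\eps$ gives $\|g-f\|_{\oo}<\eps$ for all $n\ge N$; this half is routine.

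The substantial claim is that $g$ is $c^{-}$-H\"older-$\aaa$, i.e.\ $|g(x)-g(y)|\le c''|x-y|^{\aaa}$ for a fixed $c''<c$, for which I would take $c''=\frac{c+c'}2$. First I would record two geometric facts about $\DDD$, both reducible to self-similarity together with a finite check at the first levels: (A) any two triangles of $\tau_n$ sharing a vertex $w$ meet there at angle at least $60^\circ$, so the law of cosines gives $|x-y|\ge\frac12(|x-w|+|y-w|)$ for $x,y$ in the two triangles; and (B) any two triangles of $\tau_n$ with no common vertex satisfy $\mathrm{dist}(T,T')\ge\rho\,2^{-n}$ for a constant $\rho\ge\frac12$ independent of $n$. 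Letting $T_x,T_y\in\tau_n$ contain $x,y$, I would then distinguish cases. If $T_x=T_y$ then $|g(x)-g(y)|\le\frac c8|x-y|^{\aaa}$ directly. If $T_x\ne T_y$ but they share a vertex $w$, routing $x\to w\to y$ and using (A) gives $|g(x)-g(y)|\le\frac c8(|x-w|^{\aaa}+|y-w|^{\aaa})\le\frac c4 2^{\aaa}|x-y|^{\aaa}<\frac c2|x-y|^{\aaa}$, with no restriction on $|x-y|$; note this case uses only the local H\"older property of $g$ and the geometry, not $g=f$ on $\mathbf V_n$.

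The remaining case, $T_x,T_y$ without a common vertex, is where the argument is delicate, and it is split by comparing $|x-y|$ with $2^{-n}$. Here I pick arbitrary $v_x\in V(T_x)$, $v_y\in V(T_y)$ and write $|g(x)-g(y)|\le\frac c8|x-v_x|^{\aaa}+|f(v_x)-f(v_y)|+\frac c8|y-v_y|^{\aaa}$, using $g(v_x)=f(v_x)$ and $g(v_y)=f(v_y)$. In the \emph{far} subcase $|x-y|\ge\lambda 2^{-n}$ the end terms are negligible, since $|x-v_x|\le 2^{-n}\le\lambda^{-1}|x-y|$, and the middle term is controlled by the $c'$-H\"older bound with $|v_x-v_y|\le(1+2/\lambda)|x-y|$, producing the factor $\frac{c}{4\lambda^{\aaa}}+c'(1+2/\lambda)^{\aaa}$, which tends to $c'<c''$ as $\lambda\to\infty$; I fix $\lambda$ once and for all so that it is $<c''$. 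In the \emph{near} subcase $\rho 2^{-n}\le|x-y|<\lambda 2^{-n}$ I instead use the Lipschitz bound $|f(v_x)-f(v_y)|\le M|v_x-v_y|\le M(\lambda+2)2^{-n}$: because $|x-y|<\lambda 2^{-N}$ and $\aaa<1$, this is at most $\delta_N|x-y|^{\aaa}$ with $\delta_N\to0$, while by (B) the two end terms total at most $\frac c4\rho^{-\aaa}|x-y|^{\aaa}\le\frac c4 2^{\aaa}|x-y|^{\aaa}<\frac c2|x-y|^{\aaa}$; enlarging $N$ so that $\frac c4\rho^{-\aaa}+\delta_N<c''$ closes this subcase.

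The crux is exactly this near subcase, where $|x-y|\asymp 2^{-n}$ so that neither the end terms nor the connecting term is automatically small against $|x-y|^{\aaa}$. It is resolved by two features acting together: the Lipschitz hypothesis on $f$ (unused elsewhere) forces the connecting term to be $o(1)\cdot|x-y|^{\aaa}$ at the scale $2^{-n}$, precisely because $\aaa<1$; and the deliberately small constant $c/8$, combined with the separation $\rho\ge\frac12$ and the $60^\circ$ meeting angle, keeps the remaining constants $\frac c4 2^{\aaa}$ and $\frac c4\rho^{-\aaa}$ strictly below $c$. With the constants chosen in the order $c''=\frac{c+c'}2$, then $\lambda$, then $N$ (the last also absorbing the sup-norm requirement), every case yields $|g(x)-g(y)|\le c''|x-y|^{\aaa}$ with $c''<c$, establishing that $g$ is $c^{-}$-H\"older-$\aaa$ on $\DDD$.
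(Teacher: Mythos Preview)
Your proof is correct and follows essentially the same approach as the paper: the same case analysis (same triangle / shared vertex / no common vertex), the same routing through vertices of $\mathbf V_n$ where $g=f$, and the same use of the Lipschitz bound on $f$ together with $\aaa<1$ to control the connecting term in the critical regime $|x-y|\asymp 2^{-n}$. The only difference is organizational: the paper splits first on $|x-y|$ versus the fixed threshold $\overline{M}/4=\tfrac14(c/16M)^{1/(1-\aaa)}$ and only then by triangle relation, whereas you split first by triangle relation and then, in the no-common-vertex case, by $|x-y|\gtrless\lambda 2^{-n}$; your geometric constants (the $60^\circ$ angle at a shared vertex and the separation $\rho\ge\tfrac12$) are compatible with the paper's (the paper uses the looser $30^\circ$ and the sharp value $\sqrt3/2$).
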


\begin{proof}
Since $f$ is $c^{-}$-H\"older-$\alpha$ on $\DDD$ we can choose $0<c'<c$  such that 
$f$ is $c'$-H\"older-$\alpha$ on $\DDD$.

If $$|x-y|<\Big (\frac{c}{16 M} \Big )^{\frac{1}{1-\aaa}}=:\overline{M}$$
then
$$|f(x)-f(y)|\leq M|x-y|^{1-\aaa}|x-y|^{\aaa}<\frac{c}{16}|x-y|^{\aaa}.$$
Choose $c''\in(c',c)$.

First we prove that $g$ is $c^-$-H\"older-$\alpha$. 

Suppose that  $|x-y|\geq \overline{M} /4$, 
$x\in T_{x}\in \tttt_{n}$,  and $y\in T_{y}\in \tttt_{n}$ and select vertices
\begin{equation} \label{*eqvert}
\text{ $ \nu_x \in V(T_{x})$ and $ \nu_y \in V(T_{y})$.}
\end{equation}
 Then by our assumption $f( \nu_x )=g( \nu_x )$
and $f( \nu_y )=g( \nu_y )$.
Since the diameter of $T_{x}$ and $T_{y}$ equals $2^{-n}$
we obtain 
\begin{equation*}
\begin{split}
|g(x)-g(y)|&\leq |g(x)-g( \nu_x )|+|g( \nu_x )-g( \nu_y )|+|g( \nu_y )-g(y)| \\
&\leq 2c 2^{-n\aaa}+|f( \nu_x )-f( \nu_y )| 
\le 2c 2^{-n\aaa}+c'| \nu_x - \nu_y |^{\aaa} \\
&\le 2c 2^{-n\aaa}+c'\big(|x-y| +2\cdot2^{-n}\big)^{\aaa}  \xrightarrow[n\to\infty]{} c' |x-y|^{\aaa},
\end{split}
\end{equation*}
 where the convergence is uniform due to $|x-y|$ being separated from zero.  Thus, we can choose $N$ large enough (independently of $x$ and $y$) such that
\begin{equation}\label{*gestb}
|g(x)-g(y)| \le c'' |x-y|^{\aaa}.
\end{equation}

\begin{comment}
As 
\begin{align}
|f( \nu_x )-f( \nu_y )|&\le c'| \nu_x - \nu_y |^{\aaa}\le c'\big(|x-y|-2\cdot2^{-n}\big)^{\aaa} \\
&\le c''\big(|x-y|-2\cdot2^{-n}\big)^{\aaa}-(c''-c')(\overline M/4-2\cdot2^{-n})^\alpha
\end{align}

\end{comment}

Suppose $|x-y|< \overline{M}/4$.
If $T_{x}=T_{y}$ then by our assumption
\begin{equation}\label{*gtxty}
|g(x)-g(y)|< \frac{c}{8}|x-y|^{\aaa}.
\end{equation}

If $T_{x}\not=T_{y}$, but $T_{x}$ and $T_{y}$ has a common vertex $v$
then by geometric properties of the Sierpi\'nski triangle $xvy\sphericalangle\ge \pi/6$, hence by the Law of sines
$$|x-v|=\frac{|x-y |\sin(vyx \sphericalangle) }{\sin (xvy \sphericalangle)}\leq 
|x-y|\frac{2}{\sqrt 3}$$
and similarly
$|y-v|\leq |x-y|\frac{2}{\sqrt 3}$.
Hence,
\begin{equation}\label{*gxyv}
\begin{split}
|g(x)-g(y)|&\leq |g(x)-g(v)|+|g(v)-g(y)|\leq \frac{c}{8}|x-v|^{\aaa}+\frac{c}{8} |v-y|^{\aaa}\\
 &\le 2\frac{c}{8}\Big (\frac{2}{\sqrt 3} \Big )^{\aaa}|x-y|^{\aaa}< \frac{c}{2}|x-y|^{\aaa}.
\end{split}
\end{equation}

If $T_{x}$ and $T_{y}$ does not have a common vertex then
$|x-y|\geq 2^{-n} \frac{\sqrt 3}{2} $.
Choose $v_x\in V(T_{x})$ and $v_y\in V(T_{y})$. Then 
$$
|v_x-v_y |\leq |x-y|+2\cdot 2^{-n}\leq |x-y|\Big(1+2\cdot \frac{2}{\sqrt 3}\Big)< 4|x-y| .
$$ 
Thus
\begin{equation}\label{*gtxyb}
\begin{gathered}
|g(x)-g(y)|\leq |g(x)-g(v_x)|+|g(v_x)-g(v_y)|+|g(v_y)-g(y)|\\
\leq 2\frac{c}{8}
2^{-n\aaa}+|f(v_x)-f(v_y)|
<2\frac{c}{8}  \Big ( \frac{2}{\sqrt 3} \Big )^{\aaa} |x-y|^{\aaa}+ \frac{c}{16} |v_x-v_y|^{\aaa} \\
< 2\frac{c}{8} \Big ( \frac{2}{\sqrt 3} \Big )^{\aaa}|x-y|^{\aaa}+ \frac{c}{16} 4^{\aaa}|x-y|^{\aaa}  
\leq c \Big ( \frac{1}{2\sqrt 3}+\frac{1}{4} \Big )|x-y|^{\aaa}<\frac{c}{\sqrt 3}|x-y|^{\aaa}.
\end{gathered}
\end{equation}

From \eqref{*gestb}, \eqref{*gtxty}, \eqref{*gxyv} and \eqref{*gtxyb} it follows that
$g$ is $c^{-}$-H\"older-$\alpha$ on $\DDD$. 

To see  the inequality  in \eqref{*eqsiergf} select $v_x \in V(T_{x})$.
Then
$$|f(x)-g(x)|\leq |f(x)-f(v_x)|+|f(v_x)-g(v_x)|+|g(x)-g(v_x)|< M|x-v_x|+ 0 $$
$$ + \frac{c}{8}|x-v_{x}|^{\aaa}
 \leq M\cdot 2^{-n}+\frac{c}{8}2^{-n\aaa}
\leq M\cdot 2^{-N}+\frac{c}{8}2^{-N\aaa}<\eee,$$
if $N$ is chosen sufficiently large.
\end{proof}

\begin{proof}[Proof of Lemma \ref{piecewise_affine_approxsier} .]
With a  rather straightforward  modification of the proof of Lemma \ref{piecewise_affine_approx}
one  can verify  
the following weaker form of Lemma \ref{piecewise_affine_approxsier}: locally non-constant strongly piecewise affine $c^{-}$-H\"older-$\alpha$ functions defined on $\DDD$ form a dense subset of the $c$-H\"older-$\alpha$ functions.

Hence suppose that $f\in C_{c}^{\aaa}(\DDD)$ and $\eee>0$ are given.
By the previous remark choose a locally non-constant $f_{1}\in C_{c^{-}}^{\aaa}(\DDD)$
and $n\in \N$ such that 
\begin{equation}\label{*foapp}
||f-f_{1}||_{\oo}<\eee/2 \text{ and $f_{1}$ is piecewise affine  on  any }T\in\tttt_{n}.
\end{equation}
By our assumption about the diameter of $\DDD$, the triangles in $\tttt_{n}$
are of side length $2^{-n}$.

Since $f_{1}$ is piecewise affine  on any $T\in \tau_n$ it  is Lipschitz-$M$ for a suitable $M$ and $c^{-}$-H\"older-$\alpha$
on $\DDD.$
By Lemma \ref{*lemsiergf} used with $f_{1}$ and $\eee/2$ instead of $f$ and $\eee$ choose $N$.

We want  to  obtain a locally non-constant {\it standard} strongly piecewise affine $c^{-}$-H\"older-$\alpha$ function $f_{2}$ which is $\eee$-close to $f$.
Since $f_{1}$ is a locally non-constant strongly piecewise affine $c^{-}$-H\"older-$\alpha$ function which is $\eee/2$-close to $f$
we will modify $f_{1}$ to obtain $f_{2}$, $\eee$-close to $f$.

Select a sufficiently large $n'\geq \max\{ N,n \}$
which satisfies
\begin{equation}\label{*npas}
\frac{4}{\sqrt 3}  M  (2^{-n'})^{1-\aaa}<\frac{c}{8}.
\end{equation}
To obtain $f_{2}$ we will modify $f_{1}$ on the triangles $T\in \tttt_{n'}$.
On ${\mathbf V}_{n'}$ the functions  $f_{2}$ and $f_{1}$ will coincide.

Suppose that $T\in \tttt_{n'}$ is arbitrary. Denote its vertices by $v_{1},\ v_{2}$
and $v_{3}$. Suppose that $v_{4},\ v_{5}$ and $v_{6}$ are the midpoints of the 
segments $v_{1}v_{2}$, $v_{2}v_{3}$ and $v_{1}v_{3}$, respectively.
We denote by $T_1$, $T_{2}$ and $T_{3}$ the triangles  $v_{1}v_{4}v_{6}$,
$v_{4}v_{2}v_{5}$  and $v_{5}v_{3}v_{6}$, respectively.
The triangles $T_{j}$, $j=1,2,3$ belong to $\tttt_{n'+1}$.

We define $f_{2}(v_{1})=f_{2}(v_{4})=f_{1}(v_{1})$,
$f_{2}(v_{2})=f_{2}(v_{5})=f_{1}(v_{2})$
and $f_{2}(v_{3})=f_{2}(v_{6})=f_{1}(v_{3})$.
We also assume that $f_{2}$ is affine on any triangle $T'\in \tttt_{n'+1}$.

By our choice of $M$ we have
$$|f_{1}(v_{i})-f_{1}(v_{j})|\leq  M \cdot  2^{-n'} \text{ for any }i,j\in \{ 1,2,3 \}.$$  
Suppose that $x,y\in T_{1}\cap \DDD$ (a similar argument works for the triangles $T_{2}$ and $T_{3}$).
Denote by $\pi$ the orthogonal projection onto the  second coordinate ``$\mathbf y$''-axis  
then
$$|f_{2}(x)-f_{2}(y)|\leq \frac{ \left|  f_{1}(v_{3})-f_{1}(v_{1})\right|}{\frac{\sqrt 3}{2} \cdot 2^{-n'-1}}|\pi(x)-\pi(y)|  \leq  \frac{4}{\sqrt 3} M |x-y|$$ 
$$= \Big ( \frac{4}{\sqrt 3} M |x-y|^{1-\aaa} \Big )|x-y|^{\aaa}\leq
\frac{4}{\sqrt 3}  M (2^{-n'})^{1-\aaa} |x-y|^{\aaa}< \frac{c}{8}|x-y|^{\aaa}, $$
where at the last step we used \eqref{*npas}.
Hence if Lemma \ref{*lemsiergf} is applied with the constants fixed above to the function $f_{2}$ as $g$, we obtain a standard strongly piecewise affine $c^{-}$-H\"older-$\alpha$ function
which is $\eee$-close to $f$. 
 \end{proof}

We denote by $\DDD^{*}$ the rescaled and translated copy of $\DDD$
in a way that the vertices of $\DDD^{*}$ are $v^{*}_{1}=(0,0)$, 
$v^{*}_{2}=(2/\sqrt 3,0)$ and $v^{*}_{3}=(1/\sqrt 3 ,1)$.

It is clear that $\R^{2}$ can be tiled by translated copies of the triangle  $v^{*}_{1}v^{*}_{2}v^{*}_{3}$ and its mirror image about the $x$-axis. We denote the system of these triangles by ${\mathbf T}^{*}_{0}$.
For $n\in \N$  we also  use the scaled copies of this triangular tiling consisting of triangles
of the form $2^{-n}T$, $T\in {\mathbf T}^{*}_{0}$. The system of triangles belonging to this tiling is denoted by ${\mathbf T}^{*}_{n}$.
During the definition of box dimension many different concepts can be used,
see for example \cite{[Fa1]}.
Given a set $F\sse \R^{2}$ we denote by 
${\mathbf N}^{*} _{n}(F)$ the number of those triangles $T\in {\mathbf T}^{*}_{n}$ which intersect $F$.
It is an easy exercise to see that
\begin{equation}\label{*bttcsbdim}
\udimb F=\limsup_{n\to\oo} \frac{\log {\mathbf N}^{*}_{n}(F)}{n\log 2}\text{ and }
\ldimb F=\liminf_{n\to\oo} \frac{\log {\mathbf N}^{*}_{n}(F)}{n\log 2}.
\end{equation}

\begin{lemma}\label{*lemfab}
Suppose $0<\aaa<1$. There exists $\fff:\DDD^{*}\to [0,1]$,
$\fff\in C_{3}^{\aaa}(\DDD^{*})$  such that 
$\fff(v^{*}_{1})=\fff(v^{*}_{2})=0$, $\fff(v^{*}_{3})=1$
and  there exists an exceptional set ${\mathbf E}^{*}$ such that $\lll({\mathbf E}^{*})=0$
and for any  $y\in \R\sm {\mathbf E}^{*}$
\begin{equation}\label{*lemfabconc}
\udimb \fff^{-1}(y)\leq 1-2^{-\aaa},\text{ that is }\limsup_{n\to\oo} \frac{\log {\mathbf N}^{*}_{n}(\fff^{-1}(y))}{n\log 2} \leq 1-2^{-\aaa}.
\end{equation}
Since the lower, and hence the upper box dimension is never less than the Hausdorff dimension we also have
\begin{equation}\label{*lemfabconcb}
\dimh \fff^{-1}(y)\leq 1-2^{-\aaa}.
\end{equation}
\end{lemma}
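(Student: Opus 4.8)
The plan is to realize $\fff$ as the self-similar function on $\DDD^*$ subordinate to its three first-stage sub-triangles, with value-contraction ratios tuned so that $\aaa$ is exactly the Hölder exponent and so that the branching of a level set is forced only on a set of levels of measure $1-2^{-\aaa}$. Put $t=1-2^{-\aaa}$, let $\varphi_1,\varphi_2,\varphi_3$ be the three ratio-$\tfrac12$ similarities carrying $\DDD^*$ onto its bottom-left, bottom-right and top sub-triangles (each sending $v^*_3$ to the apex of the sub-triangle), and set $\psi_1(u)=\psi_2(u)=tu$ and $\psi_3(u)=t+2^{-\aaa}u$. I would define $\fff$ as the unique continuous solution of $\fff(\varphi_i(x))=\psi_i(\fff(x))$ for $i=1,2,3$ and $x\in\DDD^*$; existence and uniqueness are standard for contractive functional equations of this type. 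First I would check that the prescriptions agree at the three shared edge-midpoints (the common value there being $0$, $t$, $t$, computed from either adjoining map), which simultaneously forces $\fff(v^*_1)=\fff(v^*_2)=0$ and $\fff(v^*_3)=1$; since $\psi_1([0,1])\cup\psi_3([0,1])=[0,t]\cup[t,1]=[0,1]$, the image is exactly $[0,1]$.

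For membership in $C^{\aaa}_3$ I would use that the oscillation of $\fff$ on a stage-$n$ sub-triangle with address $(i_1,\dots,i_n)$ equals $\prod_k B_{i_k}$, where $B_1=B_2=t$ and $B_3=2^{-\aaa}$; as $t<\tfrac12<2^{-\aaa}$, this is at most $(2^{-\aaa})^n=(2^{-n})^{\aaa}$, while such a triangle has diameter $2^{-n}\cdot\tfrac2{\sqrt3}$. A direct geometric estimate on $\DDD^*$ — for given $x\neq y$ pick the finest scale $n$ at which the stage-$n$ triangles through $x$ and $y$ still coincide or share a vertex, bound $\fff(x)-\fff(y)$ through that vertex by two oscillations, and use the separation $|x-y|\gtrsim 2^{-n}$ forced at scale $n+1$ — converts the oscillation bound into $|\fff(x)-\fff(y)|\le 3|x-y|^{\aaa}$, so $\fff\in C^{\aaa}_3(\DDD^*)$. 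This step is routine (a variant of the estimates in Lemma \ref{*lemsiergf}) and I would keep it short.

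Next I would reduce \eqref{*lemfabconc} to a counting statement. Write $\tau^*_n$ for the $3^n$ stage-$n$ sub-triangles of $\DDD^*$; each has side $2^{-n}\cdot\tfrac2{\sqrt3}$, equal to the side of the tiles of ${\mathbf T}^*_n$, so each meets at most a universal number $C$ of these tiles. Since $\fff(T)$ is the closed value-interval of $T$ and $\fff^{-1}(y)\subseteq\bigcup\{T\in\tau^*_n:y\in\fff(T)\}$, this gives ${\mathbf N}^*_n(\fff^{-1}(y))\le C\,a_n(y)$ with $a_n(y)=\#\{T\in\tau^*_n:y\in\fff(T)\}$. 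It therefore suffices to prove $\tfrac1n\log_2 a_n(y)\to 1-2^{-\aaa}$ for a.e. $y$. Here lies the crux: the two bottom children of any sub-triangle carry one and the same value-interval (the lower part, since $\psi_1=\psi_2$) while the top child carries the complementary interval, so all triangles counted by $a_n(y)$ descend through the identical nested sequence of value-intervals and remain in lockstep; consequently $a_n(y)$ doubles at each stage where $y$ lands in the lower interval and is unchanged otherwise. Making this precise yields $a_n(y)=2^{N_n(y)}$, where $N_n(y)=\#\{0\le k<n:\Phi^k y\in[0,t)\}$ and $\Phi(u)=u/t$ on $[0,t)$, $\Phi(u)=(u-t)/(1-t)$ on $[t,1)$.

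Finally I would observe that under Lebesgue measure the $\Phi$-itinerary of $y$ is an i.i.d. sequence of Bernoulli($t$) digits, because the cylinder indexed by $(\varepsilon_0,\dots,\varepsilon_{n-1})$ is an interval whose length is the product of the corresponding factors $t$ and $1-t$; by the strong law of large numbers $N_n(y)/n\to t=1-2^{-\aaa}$ for a.e. $y$. Gathering the law-of-large-numbers null set together with the countable set of $\Phi$-preimages of $\{0,t,1\}$ into $\mathbf E^*$ (levels $y\notin[0,1]$ being vacuous), we get $\udimb \fff^{-1}(y)=\limsup_n\tfrac{\log{\mathbf N}^*_n(\fff^{-1}(y))}{n\log 2}\le 1-2^{-\aaa}$ for every $y\notin\mathbf E^*$, which is \eqref{*lemfabconc}; then \eqref{*lemfabconcb} follows from $\dimh\le\udimb$. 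I expect the main obstacle to be upgrading the ``identical bottom intervals'' picture to the exact identity $a_n(y)=2^{N_n(y)}$ — one must verify that the two geometrically distinct bottom descendants really do stay synchronized through all further subdivisions — since once this product formula and the i.i.d. coding are established the law of large numbers closes the argument at once; the Hölder constant $3$ is the only other point needing care.
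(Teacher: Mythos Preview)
Your construction is correct and, perhaps more interestingly than you realise, produces \emph{exactly} the function the paper builds, only reached from the opposite end. The paper takes the Bernoulli-$p$ measure $\lambda_p$ on $[0,1)$ (with $p=2^{-\aaa}$), sets $f(y)=\lambda_p([0,y))$ and then $\fff(x,y)=f(y)$, so that $\fff$ depends only on the height. Since $f$ satisfies $f(y/2)=(1-p)f(2\cdot\tfrac{y}{2})= t\,f(y')$ and $f(\tfrac12+\tfrac{y}{2})=t+2^{-\aaa}f(y')$, this $\fff$ solves precisely your functional equations $\fff\circ\varphi_i=\psi_i\circ\fff$; by your uniqueness remark the two functions coincide. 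In particular, had you noted that your $\fff$ depends only on $y$, the H\"older estimate becomes a one–line interval covering argument: any interval of length in $[2^{-n},2^{-n+1})$ is covered by at most three dyadic intervals of length $2^{-n}$, each of $\lambda_p$–measure at most $p^n=2^{-n\aaa}$, giving the exact constant~$3$. Your shared–vertex sketch is plausible but does not obviously nail this constant, and the ``finest scale at which the triangles share a vertex'' need not exist (points on either side of a common vertex share it at every scale).

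At the level–set counting stage the two arguments are again the same computation in different clothes. Your map $\Phi$ is conjugate via $f$ to the doubling map $\sigma$, and your i.i.d.\ Bernoulli($t$) itinerary under Lebesgue is precisely the Birkhoff statement the paper uses for $\lambda_p$ under $\sigma$; the identity $a_n(y)=2^{N_n(y)}$ is the paper's observation that the horizontal line at height $f^{-1}(y)$ meets $2^{\sum_{k\le n}(1-\mathbf e_k)}$ triangles of $\mathbf T^*_n$. So the substantive ideas match; what your presentation buys is an intrinsic, coordinate-free description via the self-similar functional equation, while the paper's ``depends only on $y$'' viewpoint makes the H\"older constant and the triangle count immediate and sidesteps the synchronisation issue you flag as the main obstacle.
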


\begin{figure}[ht]
\includegraphics[width=1.0\textwidth]{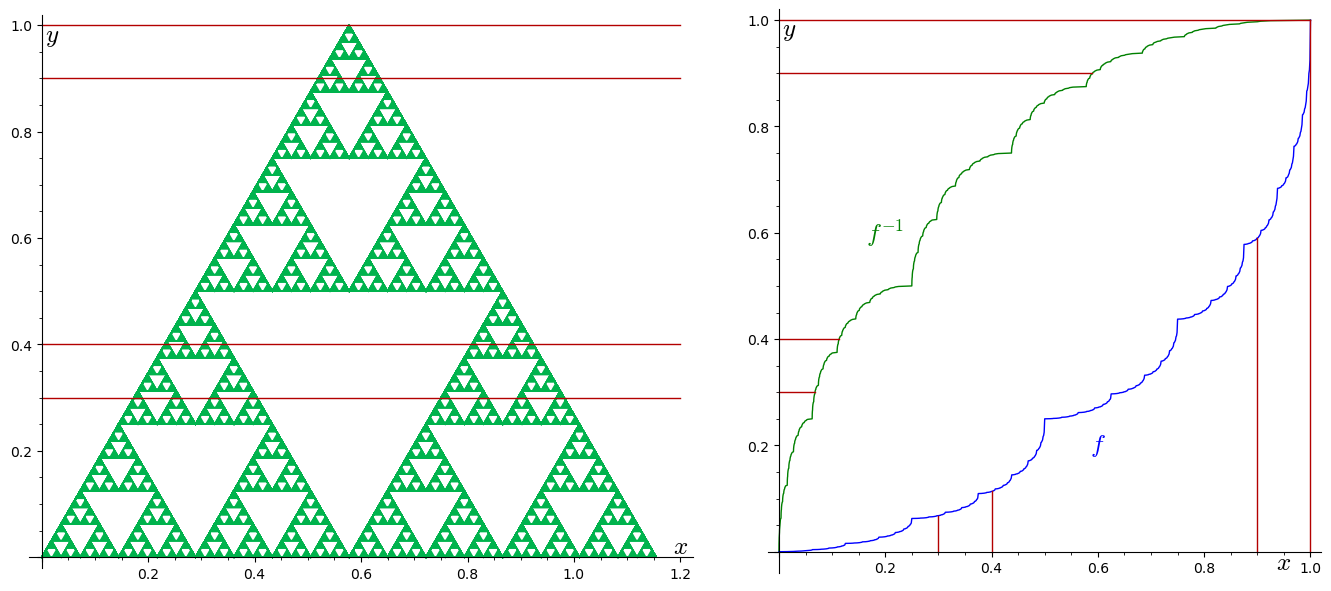}
\caption{the rescaled and translated Sierpi\'nski triangle $\DDD^{*}$, the function $f(x)=\lambda_{p}([0,x))$ and its inverse $f^{-1}$}
\end{figure}

\begin{proof} The basic concepts and results of ergodic theory we use in the sequel can be found for example in \cite{KaHa}.
Suppose that $1/2<p<1$. Denote by $\sss$ the doubling map on $[0,1)$,
that is $\sss(x)=\{ 2x \}$, where $\{ . \}$, denotes the fractional part.
Denote by $\lll_{p}$, the   $\sigma$  invariant ergodic measure for which
\begin{equation}\label{*lpdef}
\lambda_{p} \Big ( \sum_{k=1}^{n} {\mathbf e}_{k}2^{-k}+2^{-n}[0,1) \Big )=p^{\sum_{k=1}^{n}{\mathbf e}_{k}}(1-p)^{\sum_{k=1}^{n}(1-{\mathbf e}_{k})}\text{, where } {\mathbf e}_{k}\in \{ 0,1 \}, \ k=1,2,... .
\end{equation}

Set $f(x)=\lambda_{p}([0,x))$. 
Suppose that $x,y\in [0,1]$, $x< y$ and $2^{-n}\leq y-x<2^{-n+1}$, $k\in\N$.
By \eqref{*lpdef} any interval of the form
$\sum_{k=1}^{n} {\mathbf e}_{k}2^{-k}+2^{-n}[0,1)$ is of $\lambda_{p}$ measure at most $p^n$.
Since $[x,y]$ can be covered by no more than three such intervals we have
\begin{equation}\label{*fholderexp}
|f(x)-f(y)|\leq 3 \cdot p^{n}= 3\cdot 2^{n\log_{2} p}\leq  3 |x-y|^{-\log_{2} p}. 
\end{equation}

Since $\sss$ is ergodic with respect to $\lll_{p}$ by  the Birkhoff Ergodic Theorem (see for example Chapter 4 of \cite{KaHa})  we have for $\lll_{p}$
almost every $x$
\begin{equation}\label{*ergthl}
\frac{\sum_{k=1}^{n}{\mathbf e}_{k}(x)}{n}=\frac{\sum_{k=1}^{n}\chi_{[1/2,1)}(\sss^{k}x)}{n}\to \lambda_{p}([1/2,1))=p
\end{equation}
(where $\mathbf{e}_k(x)$ denotes the $k$th digit after the binary point in the binary representation of $x$).
We denote by $X_p$ the set of  $x$s  satisfying \eqref{*ergthl}.
Since for any $x,y\in [0,1]$, $x< y$ we have $\lll(f([x,y)))=\lambda_{p}([0,y))-\lambda_{p}([0,x))=\lambda_{p}([x,y))$ and the intervals $[x,y)$ generate the Borel sigma algebra we have
$\lambda_{p}(A)=\lll(f(A))$ for any Borel set $A\sse [0,1)$.
Hence $\lll(f(X_{p}))=1$ and for $\lll$ almost every $y\in [0,1]$ we have $f^{-1}(y)\in X_{p}.$

%Recall that $\pi $ denotes the orthogonal projection onto the $y$-axis.
For $(x,y)\in \DDD^{*}$ set $\fff(x,y)=f(y)$ and select $p$ such that $-\log_{2} p=\aaa$.  From \eqref{*fholderexp}
it follows that $\fff$ is a $3$-H\"older-$ \aaa$ function. 

The definition of the Sierpi\'nski  triangle implies that if $y=\sum_{k=1}^{\oo}{\mathbf e}_{k} 2^{-k}$ 
is not a dyadic rational then the horizontal line 
\begin{equation}\label{*hint}
\text{$\ds \{ (x,y):x\in \R \}$ intersects $ 2^{\sum_{k=1}^{n}(1-{\mathbf e}_{k})}$ many triangles  of ${\mathbf T}^{*}_{n}$.
}
\end{equation}
If needed, by removing a countable set we can assume that $f(X_{p})$ does not contain dyadic rational numbers.
Set ${\mathbf E}^{*}= [0,1]\sm f(X_p)$. If $y\not \in [0,1]$ then $\fff^{-1}(y)=\ess$
and \eqref{*lemfabconc} is obvious.
If $y\in [0,1]\sm {\mathbf E}^{*}$ then by \eqref{*ergthl} and \eqref{*hint} we have
\begin{equation}\label{*Nnas}
\limsup_{n\to\oo} \frac{\log {\mathbf N}^{*}_{n}(\fff^{-1}(y))}{n\log 2}= 
\limsup_{n\to\oo} \frac{\log 2^{\sum_{k=1}^{n}(1-{\mathbf e}_{k})}}{-\log 2^{-n}}
\end{equation}
$$=
\lim_{n\to \oo} 1-\frac{\sum_{k=1}^{n}{\mathbf e}_{k}(x)}{n}=1-p=
1-2^{-\aaa}.
$$
\end{proof}

\begin{theorem}\label{*thsierfb}
For any $0<\alpha< 1$, we have $D_{*}(\alpha, \Delta)\leq 1-2^{-\aaa}$.
\end{theorem}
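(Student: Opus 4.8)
The plan is to prove the upper bound $D_{*}(\alpha,\Delta)\leq 1-2^{-\alpha}$ by exhibiting a single $1$-Hölder-$\alpha$ function (or a dense $G_\delta$ family of such functions) on $\Delta$ whose level sets have Hausdorff dimension at most $1-2^{-\alpha}$ for almost every level. The key tool already assembled in the excerpt is Lemma~\ref{*lemfab}, which produces a function $\mathfrak{f}$ on the rescaled copy $\Delta^{*}$ with exactly this dimension bound on almost every level set. Since $\Delta^{*}$ is merely an affine image of $\Delta$, transporting $\mathfrak{f}$ back gives a $c$-Hölder-$\alpha$ function on $\Delta$ (for some explicit $c$ depending on the rescaling factor $2/\sqrt{3}$) with the same almost-everywhere level-set dimension bound. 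Let me think about which constant we land on.

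The function $\mathfrak{f}\in C_3^{\alpha}(\Delta^{*})$, and $\Delta^{*}$ has side length $2/\sqrt{3}$ rather than $1$. Pulling back through the similarity that sends $\Delta$ to $\Delta^{*}$ rescales the Hölder constant, so the pulled-back function lies in $C_{c}^{\alpha}(\Delta)$ for a definite $c$; crucially, the constant is irrelevant to the dimension conclusion, since scaling the function vertically or horizontally by a constant factor does not change the Hausdorff dimension of level sets (it only reparametrizes the level variable $y$, preserving the null exceptional set). So from $\mathfrak{f}$ we obtain a concrete $g\in C_c^{\alpha}(\Delta)$ with $\dim_H g^{-1}(y)\leq 1-2^{-\alpha}$ for Lebesgue-a.e.\ $y$, hence $D_*^g(\Delta)\leq 1-2^{-\alpha}$.

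The remaining issue is that $D_{*}(\alpha,\Delta)$ is defined via a supremum over dense $G_\delta$ sets of the infimum of $D_*^f$ (see \eqref{*defDcsaF}), and we have so far produced only one good function, not a whole generic family. Here I would invoke Theorem~\ref{*thmgenex}: there is a dense $G_\delta$ set $\mathcal{G}$ on which $D_*^f(\Delta)$ is identically equal to the value $D_*(\alpha,\Delta)$. Therefore it suffices to show that $D_*(\alpha,\Delta)$ cannot exceed $1-2^{-\alpha}$, and for this I would argue that the witnessing function $g$ must be approximable by, or fit into the scheme of, the generic functions. The cleanest route is: since the locally non-constant standard strongly piecewise affine functions are dense (Lemma~\ref{piecewise_affine_approxsier}), and the infimum/supremum structure of $D_*$ forces the generic value to be bounded by the value attained on any dense family, one shows the generic value is at most the infimal level-set dimension over a suitable dense class. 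Concretely, because $g$ (or small perturbations of it built from the same ergodic-averaging mechanism) can be placed in every ball $B(h,\rho)\cap C_1^{\alpha}(\Delta)$ via Lemma~\ref{*lemsiergf}, the quantity $\inf\{D_*^f:f\in\mathcal{G}\}$ for any residual $\mathcal{G}$ is forced below $1-2^{-\alpha}$, which by \eqref{*defDcsaF} yields $D_*(\alpha,\Delta)\leq 1-2^{-\alpha}$.

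The main obstacle I anticipate is precisely this last bridging step: Lemma~\ref{*lemfab} is stated for the \emph{single} model function $\mathfrak{f}$ on $\Delta^{*}$, whereas $D_*(\alpha,\Delta)$ is a genuinely generic (Baire-category) quantity. One must verify that the ergodic construction is robust enough to supply functions close to an arbitrary target $h$ — i.e.\ that one can prescribe boundary values of $h$ on the vertices $\mathbf{V}_n$ and interpolate using horizontal copies of the measure-theoretic profile $f$ on each small triangle, staying within $C_1^{\alpha}$ by the gluing estimate of Lemma~\ref{*lemsiergf}. Checking that the a.e.\ level-set dimension bound $1-2^{-\alpha}$ survives this localization and gluing (the local pieces are again affine images of $\mathfrak{f}$, so \eqref{*hint} and the Birkhoff average \eqref{*ergthl} apply on each) is the technical heart; the geometric scaling and the appeal to Theorem~\ref{*thmgenex} are then routine.
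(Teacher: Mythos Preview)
Your approach is essentially the paper's: approximate an arbitrary $h\in C_1^\alpha(\Delta)$ first by a standard strongly piecewise affine function (Lemma~\ref{piecewise_affine_approxsier}), then on each small triangle replace it by a rescaled affine copy of $\mathfrak{f}$, glued via Lemma~\ref{*lemsiergf}, yielding a countable dense family $\{f_k\}\subset C_1^\alpha(\Delta)$ with $D_*^{f_k}(\Delta)\leq 1-2^{-\alpha}$.

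The one place your outline is not right is the bridging step. The claim that $\inf\{D_*^f:f\in\mathcal{G}\}\leq 1-2^{-\alpha}$ for \emph{every} residual $\mathcal{G}$ does not follow from having good functions in every ball: a dense $G_\delta$ need not contain any of the constructed $f_k$, and arbitrary perturbations of them need not preserve the level-set dimension bound. The paper closes this via Lemma~\ref{*lemdfb} (a black box from the companion paper), which from any countable dense $\{f_k\}$ manufactures one particular dense $G_\delta$ set $\mathcal{G}_1$ on which $\sup_{f\in\mathcal{G}_1}D_*^f(\Delta)\leq\sup_k D_*^{f_k}(\Delta)$; intersecting $\mathcal{G}_1$ with the $\mathcal{G}_2$ of Theorem~\ref{*thmgenex} then gives a single $f$ with $D_*(\alpha,\Delta)=D_*^f(\Delta)\leq 1-2^{-\alpha}$.
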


In the proof we will use the following Corollary of Lemma 5.1 from \cite{sier}:

\begin{lemma}\label{*lemdfb}
Suppose that   $0< \aaa\leq 1$,   $F\subset\R^p$ is compact.
If $\{f_1,f_2,\ldots\}$ is a countable dense subset of $C_1^\alpha(F)$, then there is a dense $G_\delta$ subset $\cag$ of $C_1^\alpha(F)$ such that 
\begin{equation}\label{eqdfb}
\sup_{f\in\cag} D_*^f(F) \le \sup_{k\in\N} D_*^{f_k}(F).
\end{equation}
\end{lemma}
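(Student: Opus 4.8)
The plan is to build $\cag$ by a Baire category argument directly from the countable dense family $\{f_k\}$. Write $s_0:=\sup_{k\in\N}D_*^{f_k}(F)$. For each rational $s>s_0$ I would produce a dense $G_\delta$ set $\cag_s$ on which $D_*^f(F)\le s$; then $\cag=\bigcap_{s\in\mathbb{Q},\,s>s_0}\cag_s$ is again dense $G_\delta$, and $D_*^f(F)\le s$ for all rational $s>s_0$ forces $D_*^f(F)\le s_0$, which is the assertion. All the content is in constructing $\cag_s$.

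First I would record the stability observation that makes sup-norm perturbations usable. Fix $k$ and a rational $s>s_0\ge D_*^{f_k}(F)$. Since $s>D_*^{f_k}(F)$, for Lebesgue-a.e. level $r$ we have $\dim_H f_k^{-1}(r)<s$, hence $\cah^{s}(f_k^{-1}(r))=0$; as $f_k^{-1}(r)$ is compact, for any prescribed $\eps,\rho>0$ there is a \emph{finite} cover of $f_k^{-1}(r)$ by open sets $U_1^{(r)},\dots,U_{n_r}^{(r)}$ of diameter $<\rho$ with $\sum_i|U_i^{(r)}|^s<\eps$. Writing $V_r=\bigcup_i U_i^{(r)}$, the compact set $F\sm V_r$ is disjoint from $f_k^{-1}(r)$, so $\gamma_r:=\min_{x\in F\sm V_r}|f_k(x)-r|>0$. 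The key point is that \emph{being trapped in} $V_r$ is an open condition: if $\|f-f_k\|_{\oo}<\gamma_r/4$ and $|r'-r|<\gamma_r/4$, then $|f(x)-r'|>0$ for every $x\in F\sm V_r$, so $f^{-1}(r')\sse V_r$ and therefore $\cah^{s}_{\rho}(f^{-1}(r'))<\eps$ (here $\cah^{s}_{\rho}$ denotes the size-$\rho$ restricted Hausdorff premeasure, where only covers of diameter $<\rho$ are allowed). Thus a single good cover of one level set of $f_k$ simultaneously controls the level sets of \emph{all} nearby functions at \emph{all} nearby levels.

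Next I would globalize over levels and assemble the $G_\delta$. The intervals $(r-\gamma_r/4,r+\gamma_r/4)$ cover the full-measure set of good levels of $f_k$, so finitely many of them, centred at $r_1,\dots,r_J$, cover $\operatorname{range}f_k$ up to measure $\eps$. Running this for $\eps=\rho=1/n$ and setting $\delta_{k,n}:=\min_j\gamma_{r_j}/4$, every $f$ in the open ball $O_{k,n}:=B(f_k,\delta_{k,n})\cap C_1^\alpha(F)$ satisfies $\cah^{s}_{1/n}(f^{-1}(r))<1/n$ for all $r$ in a set $L_n$ of levels with $\lambda(\operatorname{range}f_k\sm L_n)<1/n$. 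Put $\cag_s=\bigcap_n\bigcup_k O_{k,n}$. Each $\bigcup_k O_{k,n}$ contains every $f_k$ (as $f_k\in O_{k,n}$), hence is open and dense, so $\cag_s$ is a dense $G_\delta$. For $f\in\cag_s$ and each $n$ there is $k(n)$ with $f\in O_{k(n),n}$, giving $\cah^{s}_{1/n}(f^{-1}(r))<1/n$ on $L_n$ with $\lambda(\operatorname{range}f\sm L_n)\to0$. Passing to a fast subsequence and using Borel--Cantelli, a.e. level $r$ lies in $L_{n_i}$ for all large $i$; since $\cah^{s}_{\rho}$ increases to $\cah^{s}$ as $\rho\downarrow0$, the bounds $\cah^{s}_{1/n_i}(f^{-1}(r))<1/n_i\to0$ force $\cah^{s}(f^{-1}(r))=0$, i.e. $\dim_H f^{-1}(r)\le s$. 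Hence $\lambda\{r:\dim_H f^{-1}(r)>s\}=0$ and $D_*^f(F)\le s$.

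The step I expect to be the main obstacle is exactly the first one. A plain containment $f^{-1}(r)\sse f_k^{-1}((r-\delta,r+\delta))$ is useless, because the thickened level set $f_k^{-1}((r-\delta,r+\delta))$ typically has dimension one larger than the individual slices, and indeed some functions arbitrarily close to $f_k$ (for instance those made locally constant inside the tube) genuinely do have fat level sets — so closeness to a good $f_k$ cannot by itself bound $D_*^f$. What rescues the Baire category argument is the \emph{openness} of the trapping condition $f^{-1}(r)\sse V_r$, secured by the positive gap $\gamma_r$, together with the measure-theoretic bookkeeping (a finite cover of the levels plus a Borel--Cantelli passage) needed to upgrade control on ``all but $\eps$ of the levels'' to control at almost every level.
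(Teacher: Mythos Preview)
Your argument is correct. The paper itself does not prove this lemma: it is stated as ``the following Corollary of Lemma 5.1 from \cite{sier}'', so there is no in-paper proof to compare against. Your route---fixing a rational $s>s_0$, using compactness of $f_k^{-1}(r)$ to trap nearby level sets inside a finite open cover $V_r$ via the positive gap $\gamma_r$, covering all but measure $\eps$ of the levels by finitely many such intervals, forming $\cag_s=\bigcap_n\bigcup_k O_{k,n}$, and finishing with a fast-subsequence Borel--Cantelli---is sound and is essentially the standard argument one would expect behind the cited lemma.

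Two small points worth making explicit when you write it up cleanly: (i) to extract finitely many intervals covering all but $\eps$ of the good levels, invoke inner regularity to pass to a compact subset of good levels first; (ii) since $\|f-f_{k(n)}\|_\infty<\delta_{k(n),n}$, the range of $f$ lies in the $\delta_{k(n),n}$-neighbourhood of $\operatorname{range}f_{k(n)}$, so the bad-level set for $f$ has measure at most $1/n$ plus a term of order $\delta_{k(n),n}$, which is still summable along your fast subsequence. Neither issue affects the validity of the scheme.
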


%Apart from the above corollary we also recall Theorem 5.2 from \cite{sier}.

%\begin{theorem} \label{*thmgenex}
%If   $0< \aaa\leq 1$  and $F\subset\R^p$ is compact, then there is a dense $G_\delta$ subset $\cag$ of $C_1^\alpha(F)$ such that for every $f\in\cag$ we have $D_*^f(F) = D_*(\alpha,F)$.
%\end{theorem}

\begin{proof}[Proof of Theorem \ref{*thsierfb}.]
Suppose that $\{g_{k}:k\in\N  \}$ consists  of  locally non-constant standard strongly piecewise affine $1^{-}$-H\"older-$\alpha$ functions defined on $\DDD$,
and this set is dense in the space of $1$-H\"older-$\alpha$ functions defined on $\DDD$. It is also clear that each $g_{k}$ is Lipschitz with a constant which we denote by $M_{k}$.

We suppose that $n_{k}$ is selected in a way that $g_{k}$ is piecewise affine on each $T\in \tau_{n_k}$  and there exist $v_{1}(T),\ v_{2}(T)\in V(T)$ such that
\begin{equation}\label{*vertice}
\text{$g_{k}(v_{1}(T))=g_{k}(v_{2}(T))$ and if $v_{3}(T)$ denotes the third vertex of $T$
}
\end{equation}
$$\text{then $g_{k}(v_{3}(T))\not=g_{k}(v_{1}(T))$.}$$
Observe that if we take subdivisions, that is we take an $n_{k}'\geq n_{k}$
then \eqref{*vertice} holds for suitably chosen 
vertices of triangles $T\in \tau_{n_{k}'} $.  Later in the proof we will select a 
sufficiently large $n_{k}'$. 

Next we define a function $f_{k}$ satisfying
\begin{equation}\label{*gkmfk}
||f_{k}-g_{k}||_{\oo}< 2^{-k}.
\end{equation}

First using Lemma \ref{*lemsiergf}  with $\aaa=\aaa$, $c=1$, $f=g_{k}$, $M=M_{k}$ and $\eee=2^{-k}$, select $N_{k}\geq n_{k}$.

We define $f_{k} $  such that with $n_{k}'> N_{k}$ 
\begin{equation}\label{*gkmdefa}
f_{k}(x)=g_{k}(x) \text{ for any  }x\in \bigcup_{T\in\tau_{n_{k}'} } V(T).
\end{equation}

Since $g_{k}$ is $M_{k}$-Lipschitz if $T\in \tau_{n_{k}'} $, $x,y\in V(T)$ are different
then $|x-y|=2^{-n_{k}'}$ and
\begin{equation}\label{*fkxymk}
|g_{k}(x)-g_{k}(y)|\leq M_{k}|x-y|=M_{k}\cdot 2^{-n_{k}'(1-\aaa)}|x-y|^{\aaa}<
\frac{1}{100}|x-y|^{\aaa},
\end{equation}
if we suppose that $n_{k}'$ is chosen large enough to satisfy
\begin{equation}\label{*nkmch}
M_{k}\cdot 2^{-n_{k}'(1-\aaa)}< \frac{1}{100}.
\end{equation}

Suppose that $T\in \tau_{n_{k}'} $.
For ease of notation we will write
$v_{i}$ instead of $v_{i}(T)$ for $i=1,2,3.$ 
Using notation from the second paragraph before Lemma \ref{*lemfab}, denote by $\Psi_{T}$  the similarity for which $\Psi_{T}(T\cap \DDD)=\DDD^{*}$ and
the vertices of $T$ for which we have \eqref{*vertice} satisfied are mapped in a way that 
\begin{equation}\label{*vi}
\text{$\Psi_{T}(v_{i})=v_{i}^{*}$ for $i=1,2,3.$}
\end{equation}
Then for every $x,y\in\Delta^*$
\begin{equation}\label{*PPPT}
|\Psi_{T}(x)-\Psi_{T}(y)|=2^{n_{k}'}\frac{2}{\sqrt 3}|x-y|.
\end{equation}

Let $\fff\in C_{3}^{\aaa}(\DDD^{*})$ be given by Lemma \ref{*lemfab}.
For $x\in T\cap \DDD$ we put 
\begin{equation}\label{*gkmx}
f_{k}(x)=\fff(\Psi_{T}(x))(g_{k}(v_{3})-g_{k}(v_{1}))+g_{k}(v_{1}).
\end{equation}
Suppose $x,y\in T\cap \DDD$ then  
\begin{equation}\label{*gkmh}
|f_{k}(x)-f_{k}(y)|\leq |g_{k}(v_{3})-g_{k}(v_{1})|\cdot 
3|\Psi_{T}(x)-\PPP_T(y)|^{\aaa}
\end{equation}
$$\leq M_{k}2^{-n_{k}'}\cdot 3\cdot 2^{n_{k}'\aaa}
\cdot \Big ( \frac{2}{\sqrt 3} \Big )^{\aaa}|x-y|^{\aaa}=M_{k}2^{-n_{k}'(1-\aaa)}\cdot 3
\cdot \Big ( \frac{2}{\sqrt 3} \Big )^{\aaa}|x-y|^{\aaa}<\frac{1}{8}|x-y|^{\aaa},$$
where at the last step we used \eqref{*nkmch}.

Now by \eqref{*gkmdefa}, \eqref{*gkmh} and $n_{k}'\geq N_{k}$ we can apply Lemma \ref{*lemsiergf}. Thus $f_{k}$ is a $1^-$-H\"older-$\aaa$ function satisfying \eqref{*gkmfk}.

Since $\tau_{n_{k}'}$ consists of finitely many triangles $T$, finite union of exceptional sets of measure zero is still of measure zero, and affine transformations are not changing the Hausdorff dimension
we obtain from \eqref{*lemfabconcb} and \eqref{*gkmx} that 
$\dimh f_{k}^{-1}(y)\leq 1-2^{-\aaa}$
for almost every $y\in \R$ and for every $k$.
Therefore $D_*^{f_k}(\DDD )\leq 1-2^{-\aaa}$ for every $k$ and by
the density of the functions $g_{k}$ and \eqref{*gkmfk} the functions 
$f_{k}$ are also dense in $C_1^\alpha(\DDD)$.
Hence we can apply Lemma \ref{*lemdfb} with the compact set $\DDD$ and the dense set of functions
$f_{k}$ to obtain a dense $G_{\ddd}$ set $\cag_{1}$ such that 
$D_*^{f}(\DDD )\leq 1-2^{-\aaa}$
for any $f\in \cag_{1}$. 
Since in \eqref{*defDcsaF} there is also a supremum a little extra care is needed.
Using Theorem \ref{*thmgenex} select and denote by $\cag_{2}$  a dense $G_{\ddd}$
subset
of $C_1^\alpha(\DDD )$ such that for every $f\in\cag_{2}$ we have $D_*^f(\DDD ) = D_*(\alpha,\DDD )$.
Since $\cag_{1}\cap\cag_{2}$ is non-empty we can select a function
$f$ from it. For this function $D_*^{f}(\DDD )= D_*(\alpha,\DDD )\leq 1-2^{-\aaa}.$
This completes the proof of the theorem.
\end{proof}

 For $\aaa<1$  from Theorem \ref{thm:trivial_upper_bound}
one can obtain that $D_{*}(\aaa, \Delta)\leq \frac{\log 3}{\log 2}-1\approx 0.584962500721.$
 Since $\lim_{\aaa\to 1-0}1-2^{\aaa}=1/2$ this upper estimate is worse.

%%%%%%%%%%%%%%%%%%%%%%%%%%%%%%%%
%%%%%%%%%%%%%%%%%%%%%%%%%%%%%%%%
%%%%%%%%%%%%%%%%%%%%%%%%%%%%%%%%
%%%%%%%%%%%%%%%%%%%%%%%%%%%%%%%%
%%%%%%%%%%%%%%%%%%%%%%%%%%%%%%%%
%%%%%%%%%%%%%%%%%%%%%%%%%%%%%%%%

\section{Strongly separated fractals}\label{*secssc}

In this section, our goal is to prove that $D_{*}(\alpha, F)$ vanishes for small $\alpha$ in the case when $F$ admits a strongly separated structure in the following sense:

\begin{definition}\label{*defssepf}
For some $0<\nu, \rho<1$, a nonempty set $F \subseteq \R^p$ admits a $(\nu, \rho)$ separated structure, if there exists  $K>0$, and a sequence of finite families $\mathcal{S}_k$ such that 
\begin{itemize}
\item $ F\subset\bigcup \mathcal{S}_k$ for each $k$,
\item for any $k$ and $ F'\in \mathcal{S}_k$ we have $| F'|<K\nu^k$,
\item for any $k$ and  distinct $F_{i}, F_j \in \mathcal{S}_k$  we have $\frac{1}{K}\rho^k< d(F_i, F_j)=\inf\{ |x-y|: x\in F_{i},\ y\in F_{j} \}$.
\end{itemize}
\end{definition}

As the next lemma shows such fractals are quite common.
For the well-known definitions of iterated function systems and the strong separation condition we refer to 
standard textbooks on fractal geometry like \cite{[Fa1]}.

\begin{lemma} \label{lemma:biLipIFS}
Assume that $f_1, ..., f_m$ is an iterated function system satisfying the strong separation condition. Moreover, assume that each $f_i$ is bi-Lipschitz, that is for $1\leq i \leq m$ there exists $0<\rho_i, \nu_i<1$ with
$$\nu_i |x-y| \leq |f_i(x)-f_i(y)| \leq \rho_i |x-y|.$$
Then the attractor $F$ of the system admits a $(\nu,\rho)$ separated structure for some $\nu,\rho>0$. 

More specifically, if each $f_i$ is a similarity, that is $F$ is a self-similar set, then $F$ admits a $(\nu,\nu)$ separated structure for some $\nu>0$.
\end{lemma}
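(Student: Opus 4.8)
The plan is to construct the families $\mathcal{S}_k$ directly from the symbolic dynamics of the iterated function system, by collecting the images of the attractor under all compositions of length $k$. First I would fix notation: for a word $w = (i_1, \dots, i_k) \in \{1,\dots,m\}^k$ write $f_w = f_{i_1} \circ \cdots \circ f_{i_k}$, and set $F_w = f_w(F)$. Since $F = \bigcup_{i=1}^m f_i(F)$, an easy induction gives $F = \bigcup_{|w|=k} F_w$, so taking $\mathcal{S}_k = \{F_w : |w| = k\}$ immediately satisfies the first bullet of Definition \ref{*defssepf} for every $k$. The remaining two bullets are the diameter upper bound and the separation lower bound, and both will follow from the bi-Lipschitz hypothesis once I set $\nu = \max_i \nu_i$ and $\rho = \min_i \rho_i$; note $0 < \nu, \rho < 1$.

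For the diameter bound I would iterate the upper Lipschitz estimate: applying $|f_i(x)-f_i(y)| \le \rho_i|x-y| \le \rho|x-y|$ along a word of length $k$ shows $|f_w(x)-f_w(y)| \le \rho^k|x-y|$ for all $x,y$, whence $|F_w| \le \rho^k |F|$. Taking $K \ge |F|$ (and $K \ge 1$) then yields $|F_w| < K\rho^k$, which is the desired second bullet with the contraction rate $\rho$ playing the role of $\nu$ in the definition — here I must be careful that the definition's $\nu^k$ bounds the diameter, so the geometric rate controlling diameters is $\rho$, and correspondingly I should feed $\rho$ into the $\nu$-slot of the $(\nu,\rho)$ structure. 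The separation bound is where the strong separation condition enters: by that condition the sets $f_1(F), \dots, f_m(F)$ are pairwise disjoint compact sets, so $\delta := \min_{i \neq j} d(f_i(F), f_j(F)) > 0$. For two distinct words $w, w'$ of length $k$, let $j$ be the first index where they differ; after peeling off the common prefix $u$ (of length $j-1$), the two sets lie in $f_u$-images of $f_{i_j}(F)$ and $f_{i'_j}(F)$ with $i_j \neq i'_j$. The lower Lipschitz bound applied along the prefix $u$ gives $d(F_w, F_{w'}) \ge \nu^{j-1} \delta \ge \nu^{k-1}\delta$, since $\nu < 1$ and $j \le k$.

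The remaining step is bookkeeping to absorb constants: I want $\frac{1}{K}\rho^k < d(F_i, F_j)$ in the definition, but my separation estimate produced a lower bound of the form $\nu^{k-1}\delta$. I would therefore enlarge $K$ once and for all so that a single constant simultaneously handles $|F_w| < K\rho^k$ and $\frac{1}{K}\rho^k < \nu^{k-1}\delta$; concretely, since both $\nu^{k-1}\delta/\rho^k$ and $|F|/\rho^k$ compare a fixed base against geometric quantities, choosing $K = \max\{|F|,\ 1,\ \sup_k \rho^k \nu^{-(k-1)}\delta^{-1}\}$ works provided $\rho \le \nu$, which I can always arrange by replacing $\rho$ by $\max\{\rho,\nu\}$ in the diameter slot (a weaker diameter bound is still a valid bound). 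This is the one genuinely fiddly point and the main obstacle: reconciling the two different geometric rates — the contraction rate $\rho$ governing diameters and the expansion-inverse rate $\nu$ governing separation — inside a single pair of the form demanded by the definition, and choosing the constant $K$ uniformly in $k$. For the similarity case, each $f_i$ satisfies $\nu_i = \rho_i = c_i$ for its ratio $c_i$, so with $\nu = \min_i c_i$ the same computation gives both the diameter bound $|F_w| \le \nu_{\max}^k|F|$ and the separation bound $\ge \nu^{k-1}\delta$ controlled by the single rate $\nu = \max_i c_i$ versus $\min_i c_i$; a final adjustment as above collapses these into a $(\nu,\nu)$ separated structure, establishing the more specific claim.
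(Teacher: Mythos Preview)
Your approach for the general bi-Lipschitz statement is correct in spirit and in fact more direct than the paper's: fixed-length cylinders $\mathcal{S}_k=\{F_w:|w|=k\}$ immediately give a diameter bound governed by $\max_i\rho_i$ and a separation bound governed by $\min_i\nu_i$, and since Definition~\ref{*defssepf} allows two independent rates, you simply plug these in as the definition's $\nu$ and $\rho$ and choose $K>\max\{|F|,\ (\min_i\nu_i)/\delta\}$. Two remarks on the write-up: your declared choices $\nu=\max_i\nu_i$, $\rho=\min_i\rho_i$ are reversed (the very next line ``$\rho_i\le\rho$'' already needs $\rho=\max_i\rho_i$); and your ``fiddly bookkeeping'' paragraph is a self-inflicted complication---there is no need to force the diameter and separation rates to coincide in the general case, so the condition ``$\rho\le\nu$'' and the patch ``replace $\rho$ by $\max\{\rho,\nu\}$'' are irrelevant (and the patch would enlarge $\rho$, not shrink it).

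The genuine gap is in the similarity case. With fixed-length cylinders your diameter rate is $c_{\max}=\max_i c_i$ and your separation rate is $c_{\min}=\min_i c_i$, and these are different whenever the similarity ratios are not all equal. A single $\nu$ with $|F_w|<K\nu^k$ forces $\nu\ge c_{\max}$, while $\frac{1}{K}\nu^k<d(F_w,F_{w'})$ together with $d\sim c_{\min}^{\,k}$ forces $\nu\le c_{\min}$; these are incompatible and no choice of $K$ rescues this. Your ``final adjustment as above'' therefore cannot collapse the two rates into one. The paper handles this by taking $\mathcal{S}_k$ to consist of \emph{stopping-time} cylinders, obtained by subdividing until each cylinder has diameter between $\nu^{k+1}|F|$ and $\nu^k|F|$: then the least common ancestor of any two cylinders in $\mathcal{S}_k$ has similarity ratio at least $\nu^k$, so the separation is at least $r\nu^k$ with the \emph{same} $\nu$. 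That device is exactly what is missing from your argument for the second assertion.
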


\begin{proof}
For any $j_1, ..., j_k \in \{1, 2, ..., m\}$ we say that $f_{j_k}(...(f_{j_1}(F))...)=F_{j_1 j_2 ... j_k}$ is a $k$th level cylinder of $F$. 

First we show that $\nu = \min_{1\leq i \leq m} \nu_i$ is a valid choice. To establish this, we will define the required families  for any $k$ 
by considering smartly chosen cylinder sets.
Notably, $\mathcal{S}_k$ will consist of cylinders $C_1, ..., C_l$ such that for the diameter $|C_j|$ of any of them,
\begin{equation} \label{diamcondition}
\nu^{k+1}|F| \leq |C_j| \leq \nu^k |F|.
\end{equation} 
This condition is clearly satisfiable by iteratively splitting the cylinders we consider. In particular, start this procedure with the $0$-level cylinder $F$, split it into $m$ 
many first level cylinders. Later on, in each step split precisely those cylinders which have diameter larger than $\nu^k|F|$, and leave the others unchanged. Due to the bi-Lipschitz property of each $f_i$, this algorithm produces a finite system of cylinders in finitely many steps, such that each cylinder satisfies \eqref{diamcondition}. It yields that the above choice of $\nu$ is valid indeed for large enough $K$.

Assume now that the minimal distance between any two of the sets $F_1, F_2, ..., F_m$ is $r$, and consider arbitrary cylinders $C_j, C_l \in \mathcal{S}_k$. Now let $C$ be the smallest cylinder set containing both $C_j$ and $C_l$. In this case, $C=g(F)$, where $g$ is the composition of a finite sequence of functions $f_{i_1}, ..., f_{i_L}$ for some $1\leq i_1, ..., i_L \leq m$. Consequently, $C_j \subseteq g(F_{j'})$ and $C_l \subseteq g(F_{l'})$ for some $1\leq j', l' \leq m$. It yields that the distance between $C_{j}$ and $C_{l}$ is at least as large as the distance between $g(F_{j'})$ and $g(F_{l'})$. 
Moreover, $C$ has diameter at least $\nu^k |F|$: otherwise it would not have been splitted during the procedure creating $\mathcal{S}_k$. 
Hence if $\rho_{*}=\max_{1\leq i \leq m} \rho_i$, we can deduce that for the number $L$ of functions determining $g$ we have 
\begin{align*}
\rho_*^L|F| \ge |g(F)| &\ge \nu^k|F| \\
L\log\rho_* &\ge k\log\nu \\
L &\le \frac{k\log\nu}{\log\rho_*}.
\end{align*}
That is $L\le kL_*$ for
$$L_* = \frac{\log \nu}{\log \rho_{*}}.$$
Altogether it yields that as the distance between $F_{j'}$ and $F_{l'}$ is at least $r$, the distance between $g(F_{j'})$ and $g(F_{l'})$ is at least
$$r\nu^{kL_*} = r \left(\nu^{L_*}\right)^k,$$
which implies that $\nu^{L_*}$ is a valid choice for $\rho$ with a large enough $K$, concluding the proof of the first part.

Concerning the statement for self-similar sets, capitalizing on the fact that $g$ is a similarity, we are able to take a more comfortable route to conclude the proof from the observation that $C$ has diameter at least $\nu^k |F|$. Notably, this implies that the similarity ratio of $g$ is at least $\nu^k$, and consequently, the distance between $C_j$ and $C_l$ cannot be smaller than $r\nu^k$. It verifies that in this case $\rho=\nu$ can be chosen.
\end{proof}

The essence of this section is the following lemma:

\begin{lemma} \label{piecewiseconstapprox}
Assume that $F$ admits a $(\nu,\rho)$ separated structure, and $0<\alpha<\frac{\log \nu}{\log \rho}$.
Then piecewise constant functions with finitely many pieces form a dense subset of the 1-H\"older-$\alpha$ functions.
\end{lemma}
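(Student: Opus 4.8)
The plan is to approximate a given $f\in C_1^\alpha(F)$ by a function $g$ which is \emph{constant on each member of a single family} $\mathcal{S}_k$ for one sufficiently large fixed level $k$, assigning to each piece $F'\in\mathcal{S}_k$ the value $f(x_{F'})$ at an arbitrarily chosen representative $x_{F'}\in F'\cap F$. Distinct pieces of $\mathcal{S}_k$ are at distance $>\tfrac1K\rho^k>0$, hence disjoint, so $g$ is well defined, and since $\mathcal{S}_k$ is finite $g$ is piecewise constant with finitely many pieces. Closeness will be automatic: if $x$ lies in the piece $F'$, then $|g(x)-f(x)|=|f(x_{F'})-f(x)|\le |F'|^\alpha\le (K\nu^k)^\alpha$, which tends to $0$ as $k\to\infty$.

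The real content is to keep $g$ inside $C_1^\alpha(F)$. Before fixing $g$ I would prepare $f$ by two harmless reductions costing only a small sup-norm error. First, using Lemma \ref{lipschitzapprox}, I replace $f$ by a nearby function that is simultaneously Lipschitz (with some constant $M$) and $1$-H\"older-$\alpha$. Second, I contract it slightly towards one of its own values, $f\mapsto(1-\delta)\big(f-f(x_0)\big)+f(x_0)$, so that the new $f$ is Lipschitz-$M'$ and $c$-H\"older-$\alpha$ with $c=1-\delta<1$, staying within $\varepsilon/2$ of the original (here $\mathrm{osc}(f)\le 1$ since $|F|\le1$). The slack $c<1$ and the extra Lipschitz regularity are exactly what the Hölder estimate will consume.

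For the Hölder bound the only nontrivial case is $x\in F_i$, $y\in F_j$ with $F_i\neq F_j$ in $\mathcal{S}_k$, since within one piece $g$ is constant. Writing $D=|x_{F_i}-x_{F_j}|$, the triangle inequality with the diameter bound gives $D\le|x-y|+2K\nu^k$, while separation gives $|x-y|\ge d(F_i,F_j)>\tfrac1K\rho^k$. I would bound $|g(x)-g(y)|=|f(x_{F_i})-f(x_{F_j})|$ by whichever of the two available estimates is favourable, splitting at a threshold $t_0$ depending only on $M',\alpha$ (not on $k$). In the small-distance regime $|x-y|\le t_0$ I use the Lipschitz bound $M'D\le M'\big(|x-y|+2K\nu^k\big)$ and verify $M'|x-y|\le\tfrac12|x-y|^\alpha$ by the choice of $t_0$, together with $2M'K\nu^k\le\tfrac12|x-y|^\alpha$: the latter follows from $|x-y|^\alpha\ge(\tfrac1K\rho^k)^\alpha$ because the hypothesis $\alpha<\frac{\log\nu}{\log\rho}$, i.e.\ $\nu<\rho^\alpha$, forces $(\rho^\alpha/\nu)^k\to\infty$. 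In the large-distance regime $|x-y|>t_0$ I use the H\"older bound and write $cD^\alpha\le c\,|x-y|^\alpha\big(1+2K\nu^k/|x-y|\big)^\alpha$, where the last factor tends to $1$ uniformly for $|x-y|\ge t_0$, so for large $k$ the slack $c=1-\delta$ absorbs it and the product stays $\le|x-y|^\alpha$.

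The main obstacle is precisely this Hölder estimate: collapsing $f$ to a single value per piece inflates the relevant distance from $d(F_i,F_j)$ to $d(F_i,F_j)+2K\nu^k$, and since $(t+\text{error})^\alpha$ strictly exceeds $t^\alpha$, any single-bound argument yields only a H\"older constant slightly above $1$, which never suffices (this already fails for self-similar Cantor sets). The resolution is the two-regime split combined with the preliminary contraction: $\nu<\rho^\alpha$ defeats the \emph{additive} error against the separation at small scales, while the factor $1-\delta$ defeats the \emph{multiplicative} error at large scales. I expect the only delicate point to be the order of quantifiers—choosing $\delta$ first, then $t_0$, then $k$—which must be respected so that $t_0$ is fixed independently of $k$.
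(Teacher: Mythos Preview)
Your argument is correct, but the paper's route is noticeably simpler and avoids the two-regime split entirely. After the same preparations (density of $c$-H\"older-$\alpha$ functions for $c<1$, then Lemma~\ref{lipschitzapprox} to get a function $f_r$ that is simultaneously $M$-Lipschitz and $c$-H\"older-$\alpha$), the paper does \emph{not} first bound $D=|x_{F_i}-x_{F_j}|$ and then compare $D$ to $|y-y'|$. Instead it routes the triangle inequality through the actual points $y,y'$:
\[
|\tilde f(y)-\tilde f(y')|=|f_r(x_{F_i})-f_r(x_{F_j})|\le |f_r(y)-f_r(y')|+|f_r(y)-f_r(x_{F_i})|+|f_r(y')-f_r(x_{F_j})|.
\]
The first term is $\le c|y-y'|^\alpha$ directly, and the other two are $\le MK\nu^k$ each, so one only needs $2MK\nu^k\le(1-c)|y-y'|^\alpha$, which follows in one line from $|y-y'|\ge\tfrac1K\rho^k$ and $\rho^\alpha>\nu$. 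Thus the ``main obstacle'' you describe---that collapsing to reference points inflates the distance and forces a constant slightly above $1$---is an artifact of bounding via $D$; the paper's triangle-inequality routing makes the H\"older term carry the correct distance $|y-y'|$ from the outset, and the Lipschitz bound handles only the within-piece displacements. Your two-regime argument with threshold $t_0$ works, but it is doing extra bookkeeping that the paper's decomposition renders unnecessary.
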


\begin{proof}
Taking union over $0<c<1$, $c$-H\"older-$\alpha$ functions clearly form a dense subset of 1-H\"older-$\alpha$ functions. 
Consequently, it is sufficient to prove that for any $c$-H\"older-$\alpha$ function $f$ we can find a piecewise constant 1-H\"older-$\alpha$ function $\tilde{f}$ in the $\varepsilon$ neighborhood of $f$ in the supremum norm for fixed $\varepsilon>0$. 
To this end, choose $f_r$ according to Lemma \ref{lipschitzapprox} such that it is in the $\frac{\varepsilon}{2}$ neighborhood of $f$,  $M$-Lipschitz and $c$-H\"older-$\alpha$. 
Our aim is to introduce some further perturbation to obtain the 1-H\"older-$\alpha$ function $\tilde{f}$, which is piecewise constant on $F$. We will achieve this goal by using the covers granted by the separated structure of $F$. Notably, we will consider the covering $\mathcal{S}_k=\{F_1, ..., F_l\}$ 
guaranteed by Definition \ref{*defssepf}
  for large enough $k$, and define $\tilde{f}$ separately on $F_1, ..., F_l$ by $\left.\tilde{f}\right|_{F_i} =f_r(x_i)$, using some reference points $x_i\in F_i$. 
Now we would like to prove that the function $\tilde{f}$ is 1-H\"older-$\alpha$ for large enough $k$. 
Choose points $y,y'$ from distinct elements of the covering $\mathcal{S}_k$, where the reference points are $x,x'$. (If $y,y'$ are in the same element of covering, we have nothing to prove.) We have
\begin{displaymath}
|\tilde{f}(y)-\tilde{f}(y')| = |f_r(x)-f_r(x')|.
\end{displaymath}
Then by the triangle inequality, and the H\"older and Lipschitz properties of $f$
\begin{displaymath}
|\tilde{f}(y)-\tilde{f}(y')|\leq |f_r(y)-f_r(y')|+|f_r(y)-f_r(x)|+|f_r(y')-f_r(x')| 
\end{displaymath}
$$\leq c|y-y'|^\alpha +M|y-x|+M|y'-x'|.$$
Hence it is sufficient to prove
\begin{displaymath}
c|y-y'|^\alpha +M|y-x|+M|y'-x'| \leq |y-y'|^\alpha,
\end{displaymath}
that is
\begin{displaymath}
M|y-x|+M|y'-x'| \leq (1-c)|y-y'|^\alpha.
\end{displaymath}
Now on the right hand side $|y-y'|\geq \frac{1}{K}\rho^k$, while on the left hand side both distances are at most $K\nu^k$, where $K$ comes from
Definition \ref{*defssepf}. 
Thus it suffices to prove that for large enough $k$ we have
\begin{displaymath}
2K\cdot M \nu^k \leq \frac{1-c}{K^\alpha}(\rho^k)^\alpha.
\end{displaymath}
However, it immediately follows from the choice of $\alpha$, as that guarantees $\rho^\alpha>\nu$. That is, $\tilde{f}$ is 1-H\"older-$\alpha$  if $k$ is chosen sufficiently large. 
Moreover, by increasing $k$, $f_r$ and $\tilde{f}$ can be arbitrarily close to each other. 
Consequently, $\tilde{f}$ can be in the $\varepsilon$ neighborhood of $f$, which yields the statement of the lemma.
\end{proof}

The following theorem is a straightforward consequence of Lemma \ref{piecewiseconstapprox}:

\begin{theorem} \label{sscdegenerate}
Assume that $F$ admits a $(\nu,\rho)$ separated structure, and $0<\alpha<\frac{\log \nu}{\log \rho}$. Then for the generic 1-H\"older-$\alpha$ function we have that $\lambda(f(F))=0$, and consequently, $\underline{D}_{*}(\alpha,F)=D_{*}(\alpha, F)=0$.
\end{theorem}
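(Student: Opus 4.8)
The goal is to deduce Theorem~\ref{sscdegenerate} from Lemma~\ref{piecewiseconstapprox}. The heart of the matter is that under the hypothesis $0<\alpha<\frac{\log\nu}{\log\rho}$, the piecewise constant functions with finitely many pieces are dense in $C_1^\alpha(F)$, and such functions take only finitely many values; hence their ranges are finite, so of Lebesgue measure zero. The strategy is to turn this into a statement about the \emph{generic} $1$-H\"older-$\alpha$ function by exhibiting a dense $G_\delta$ set on which $\lambda(f(F))=0$, and then to observe that this forces $D_*^f(F)=0$ for each such $f$.

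**Main steps.** First I would fix, for each $m\in\N$, the set
\[
\cag_m=\{\,f\in C_1^\alpha(F): \lambda(N_\eta(f(F)))<\tfrac1m \text{ for some }\eta>0\,\},
\]
where $N_\eta$ denotes the open $\eta$-neighborhood in $\R$; equivalently one asks that $f(F)$ be coverable by finitely many intervals of total length $<\tfrac1m$. Each $\cag_m$ is open in the supremum metric: if $f(F)$ is covered by finitely many open intervals of total length $<\tfrac1m$, then any $g$ with $\|g-f\|_\infty$ small enough has $g(F)$ inside a slightly enlarged cover, still of total length $<\tfrac1m$. Each $\cag_m$ is also dense: given any $f$ and any $\varepsilon>0$, Lemma~\ref{piecewiseconstapprox} produces a piecewise constant $\tilde f\in C_1^\alpha(F)$ with $\|\tilde f-f\|_\infty<\varepsilon$, and $\tilde f(F)$ is a finite set, which lies in the interior of a finite union of intervals of arbitrarily small total length; thus $\tilde f\in\cag_m$. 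Setting $\cag=\bigcap_m\cag_m$ gives a dense $G_\delta$ subset of $C_1^\alpha(F)$ on which $\lambda(f(F))=0$.

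**Concluding the dimension statement.** For any $f\in\cag$ we have $\lambda(f(F))=0$, so the set $\{r:\dim_H(f^{-1}(r))\ge d\}$, being contained in $f(F)$, has Lebesgue measure zero for \emph{every} $d>0$; hence $D_*^f(F)=0$. Since $\cag$ is dense $G_\delta$, the definition~\eqref{*defDcsaF} of $D_*(\alpha,F)$ as a supremum over $\cag'\in\mg_{1,\alpha}$ of $\inf_{f\in\cag'}D_*^f$ yields $D_*(\alpha,F)\le\inf_{f\in\cag}D_*^f(F)=0$, so $D_*(\alpha,F)=0$. For $\underline D_*(\alpha,F)$ I note that $\cag$ contains locally non-constant functions (indeed the generic function is locally non-constant, or one perturbs slightly to arrange this on the support of $F$), and for any such $f$ the infimum defining $\underline D_*$ is bounded above by $D_*^f=0$, giving $\underline D_*(\alpha,F)=0$ as well.

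**Expected obstacle.** The genuinely substantive content is Lemma~\ref{piecewiseconstapprox}, which I am allowed to assume; granting it, the remaining work is the soft Baire-category packaging. The only point requiring a little care is verifying that the set where $f(F)$ has small measure is genuinely \emph{open}, not merely that individual piecewise-constant functions have zero-measure range --- one must pass from the finite range of $\tilde f$ to an open neighborhood via a finite interval cover with a strictly positive margin, and check the supremum-norm perturbation keeps $g(F)$ inside the enlarged cover. A secondary subtlety is ensuring the generic function (or a dense $G_\delta$ of it) is locally non-constant so that the claim about $\underline D_*$ is not vacuous; this follows from standard genericity of local non-constancy, or can be sidestepped if the empty-infimum convention already forces $\underline D_*=0$.
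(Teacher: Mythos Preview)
Your proposal is correct and follows essentially the same approach as the paper: use Lemma~\ref{piecewiseconstapprox} to get density of piecewise constant functions, observe that in a small supremum-neighborhood of any such function the range has measure less than $1/m$, and intersect the resulting open dense sets to obtain a dense $G_\delta$ on which $\lambda(f(F))=0$. Your write-up is in fact more explicit than the paper's (which leaves the ``consequently'' clause about $\underline{D}_*$ and $D_*$ entirely to the reader), and your care about openness via a finite interval cover with margin, and about the locally non-constant issue for $\underline{D}_*$, goes beyond what the paper spells out.
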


\begin{proof}
 Due  to Lemma \ref{piecewiseconstapprox}, the piecewise constant 1-H\"older-$\alpha$ functions form a dense subset of the 1-H\"older-$\alpha$ functions. 
Such a function $f_0$ has a finite range, hence for every $l\in\N$, in a small enough neighborhood of it, for any function $f$ we have $\lambda(f(F))<\frac{1}{l}$. 
By taking the union of all such neighborhoods we find an open, dense subset of the 1-H\"older-$\alpha$ functions, in which $\lambda(f(F))<\frac{1}{l}$. 
Taking intersection 
of these open sets
for $l=1, 2, ...$ we obtain that generically, $\lambda(f(F))=0$.
\end{proof}

Coupling this result with Lemma \ref{lemma:biLipIFS} yields the following corollary:

\begin{corollary} \label{cor:biLipIFS}
If $F$ is the attractor of a bi-Lipschitz iterated function system satisfying the strong separation condition, then for small enough $\alpha>0$ we have $\underline{D}_{*}(\alpha,F)=D_{*}(\alpha, F)=0$.

More specifically, if $F$ is a self-similar set satisfying the strong separation condition, then for $0<\alpha<1$ we have $\underline{D}_{*}(\alpha,F)=D_{*}(\alpha, F)=0$.
\end{corollary}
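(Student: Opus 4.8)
The plan is to deduce the statement directly by chaining together Lemma \ref{lemma:biLipIFS} and Theorem \ref{sscdegenerate}, so that the proof amounts to verifying that the hypotheses of the latter are met over a nonempty range of $\alpha$ in the general case, and over the full range $0<\alpha<1$ in the self-similar case.

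First I would treat the general bi-Lipschitz case. By Lemma \ref{lemma:biLipIFS}, the attractor $F$ of a bi-Lipschitz iterated function system satisfying the strong separation condition admits a $(\nu,\rho)$ separated structure for some $0<\nu,\rho<1$. The crucial observation---and essentially the only point requiring any care---is that the threshold $\frac{\log\nu}{\log\rho}$ appearing in Theorem \ref{sscdegenerate} is strictly positive: since $0<\nu,\rho<1$ we have $\log\nu<0$ and $\log\rho<0$, so their quotient is positive. Hence the interval $\left(0,\frac{\log\nu}{\log\rho}\right)$ is nonempty, and for every $\alpha$ in it Theorem \ref{sscdegenerate} yields $\underline{D}_{*}(\alpha,F)=D_{*}(\alpha,F)=0$. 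This is precisely the assertion for small enough $\alpha>0$.

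For the self-similar refinement I would invoke the second part of Lemma \ref{lemma:biLipIFS}, which grants a $(\nu,\nu)$ separated structure governed by a single parameter $0<\nu<1$. In this case the threshold degenerates to $\frac{\log\nu}{\log\nu}=1$, so the admissible range in Theorem \ref{sscdegenerate} becomes the entire interval $0<\alpha<1$; applying the theorem on this range gives $\underline{D}_{*}(\alpha,F)=D_{*}(\alpha,F)=0$ for all $0<\alpha<1$, as claimed.

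Since the substantive work is carried by the two cited results, I do not expect any genuine obstacle here; the proof is deliberately short. The only thing one must be careful about is the sign of $\frac{\log\nu}{\log\rho}$, which confirms both that ``small enough $\alpha$'' denotes a meaningful, nonempty range of exponents in the bi-Lipschitz case, and that the self-similar case indeed exhausts the full admissible range $(0,1)$.
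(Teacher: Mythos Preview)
Your argument is correct and matches the paper's approach exactly: the paper does not even spell out a proof, merely noting that coupling Theorem \ref{sscdegenerate} with Lemma \ref{lemma:biLipIFS} yields the corollary, which is precisely the chain you have written out. The only content you add beyond the paper is the explicit observation that $\frac{\log\nu}{\log\rho}>0$ (resp.\ $=1$ in the self-similar case), which is the right detail to check.
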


Taking into consideration Theorem \ref{allexpthm}, we can see that in contrast with certain results of fractal geometry, this corollary does not extend to self-similar sets satisfying the open set condition instead of the strong separation condition.

\section{Phase transition}\label{*secphase}

Looking at the example with the Sierpi\'nski triangle our lower estimate
for $D_{*}(\alpha, \Delta)$ was positive for positive $\aaa$s, and hence 
$D_{*}(\alpha, \Delta)>D_{*}(0, \Delta)=0$. On the other hand, according to Corollary \ref{cor:biLipIFS}, if $F$ is a self-similar set satisfying the strong separation condition, then $D_{*}(\alpha, F)=0$ for $0<\alpha<1$. This phenomenon reflects the intuitive difference between these cases: informally speaking, while the Sierpi\'nski triangle is a fairly ``thick'' fractal, self-similar sets satisfying the strong separation condition are quite loose. It raises the natural question whether there are fractals adhering to an intermediate behaviour in the following sense:  for small values of
$\aaa$ even the level sets 
of H\"older-$\aaa$ functions are sufficiently flexible and ``compressible''
and there exists $1>\aaa_{\fff}>0$  such that $D_{*}(\alpha, F)=D_{*}(0, F)$, holds for all $\aaa\in [0,\aaa_{\fff})$ while  $D_{*}(\alpha, F)>D_{*}(0, \Delta)$
holds for $\aaa >\aaa_{\fff}$. 
If this happens we say that there is a {\it phase transition} for $D_{*}(\alpha, F)$. 
In a very rough heuristic way we could say that if there is a phase transition then
for small values of $\aaa$ the ``traffic'' corresponding to the level sets is not heavy enough to generate ``traffic jams'' and can go through the ``narrowest'' places,
while for larger $\aaa$s ``traffic jams'' show up and ``thicker'' parts of the fractal
should be used to ``accommodate'' the level sets.

Next we construct a fractal $F$ for which $D_{*}(\alpha, F)=D_{*}(0, F)=0$ for some small values of $\alpha$, while $D_{*}(\alpha, F)>0$ for large values of $\alpha$. Corollary \ref{cor:biLipIFS} hints us that we can hope for simple examples displaying this phenomenon, however, probably not self-similar ones.

To this end, we construct a fat Cantor set $C= \bigcap_{n=0}^{\infty}C_n$, where $(C_n)$ is a decreasing sequence of sets, such that $C_n$ is the union of $2^n$ disjoint, closed intervals  of  the same length.  
Let $C_0=[0,1]$, and for $n>0$ we obtain $C_{n}$ by removing an open interval from the middle of each maximal subinterval of $C_{n-1}$. 
We make the construction explicit by specifying the length of the maximal subintervals at each level: let it be $l_n=\frac{1}{2^{n+1}-1}$. 
Then $2l_n<l_{n-1}$, hence such a system can be constructed indeed by successive interval removals. 
Moreover, the Cantor set $C$ in the limit is indeed a fat Cantor set in terms of Lebesgue measure, as $\lambda(C_n)=\frac{2^n}{2^{n+1}-1}$, thus
\begin{displaymath}
\lambda(C)=\lim_{n\to\infty}\lambda(C_n)=\frac{1}{2}.
\end{displaymath}

We can verify the following:

%%%%%%%%%%%%%%%%%%%%%%%%%%%%%%%%
%%%%%%%%%%%%%%%%%%%%%%%%%%%%%%%%
%%%%%%%%%%%%%%%%%%%%%%%%%%%%%%%%
%%%%%%%%%%%%%%%%%%%%%%%%%%%%%%%%

\begin{theorem} \label{phasetransfatcantor}
$F=C\times C \subseteq \mathbb{R}^2$ admits phase transition. 
Notably, for $0<\alpha<\frac{1}{2}$ we have $D_{*}(\alpha, F)=0$, while for $\frac{1}{2}<\alpha\leq 1$ we have $D_{*}(\alpha, F)= 1$.
\end{theorem}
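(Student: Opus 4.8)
The plan is to treat the two regimes separately: the subcritical case $\alpha<\tfrac12$ is a direct application of the separation machinery already set up, while the supercritical case $\alpha>\tfrac12$ contains all of the genuine work.

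First I would record the separated structure of $F=C\times C$. Taking $\mathcal S_k$ to be the $4^k$ squares $I\times J$ with $I,J$ maximal subintervals of $C_k$, each such square has diameter $\asymp 2^{-k}$, while the smallest distance between two distinct squares is realized by the newest gaps of $C$ at level $k$, of size $g_k=\frac{1}{(2^k-1)(2^{k+1}-1)}\asymp 4^{-k}$. Hence $F$ admits a $(\nu,\rho)$ separated structure in the sense of Definition \ref{*defssepf} with $\nu=\tfrac12$, $\rho=\tfrac14$, so that $\frac{\log\nu}{\log\rho}=\tfrac12$. Theorem \ref{sscdegenerate} then gives $D_*(\alpha,F)=0$ for every $0<\alpha<\tfrac12$, which, together with $D_*(0,F)=0$ (as $F$ is totally disconnected, $\dim_{tH}F=0$), is the asserted subcritical behaviour. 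For the upper bound in the supercritical range note that $C$ has positive length, so $\overline{\dim}_B C=1$ and hence $\overline{\dim}_B(C\times C)=2$; Theorem \ref{thm:trivial_upper_bound} yields $D_*(\alpha,F)\le 1$ for all $\alpha$. It remains to prove the matching lower bound $D_*(\alpha,F)\ge1$ when $\alpha>\tfrac12$.

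For the lower bound I would first reduce to a statement about projections. Writing $\pi_1,\pi_2$ for the coordinate projections, which are $1$-Lipschitz, one has $\dim_H f^{-1}(r)\ge \dim_H \pi_1(f^{-1}(r))$, and any subset of $\R$ of positive Lebesgue measure has Hausdorff dimension $1$. Moreover, by Tonelli applied to $\{(x,r):x\in C,\ r\in f(\{x\}\times C)\}$,
$$\int_{\R}\lambda\big(\pi_1(f^{-1}(r))\big)\,dr=\int_C \lambda\big(f(\{x\}\times C)\big)\,dx.$$
Thus, if for a given $f$ the right-hand side is positive (or its $\pi_2$-analogue is), then $\lambda(\pi_1(f^{-1}(r)))>0$ for a positive-measure set of levels $r$, whence $\dim_H f^{-1}(r)\ge1$ there and $D_*^f(F)\ge1$. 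So the lower bound reduces to showing that, generically, $f$ has positive-length range on a positive-measure set of vertical (or horizontal) fibers; combined with Theorem \ref{*thmgenex} this yields $D_*(\alpha,F)\ge1$.

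The core claim — that $\{f: \int_C\lambda(f(\{x\}\times C))\,dx>0 \text{ (up to swapping coordinates)}\}$ is comeager — is the main obstacle, and it is exactly here that $\alpha>\tfrac12$ must enter. The difficulty is that ``the range on a fiber has positive measure'' is not an open condition, since the Lebesgue measure of an image is not semicontinuous under uniform convergence. What \emph{is} robust is oscillation: the fibrewise quantity $\max-\min$ is lower semicontinuous, and the condition ``large fibrewise oscillation on a positive-measure set of columns'' is easily seen to define a dense $G_\delta$ (any $f$ can be perturbed towards a map depending nontrivially on the second coordinate). The role of $\alpha>\tfrac12$ is precisely to upgrade oscillation to positive measure: across the newest gaps of $C$ at level $k$, of size $\asymp 4^{-k}$, a $1$-H\"older-$\alpha$ function changes by at most $\asymp 4^{-k\alpha}=o(2^{-k})$, i.e.\ by less than the length $\asymp 2^{-k}$ of the adjacent intervals, so at the finest scales $f$ cannot ``jump'' across gaps. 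Consequently the fibrewise range has no small-scale holes and behaves, at fine scales, like the image of a function on a genuine interval, for which oscillation $\ge\delta$ forces range-measure $\ge\delta$. An alternative to this oscillation-to-measure step is to import the conductivity/Mass-Distribution scheme of Theorem \ref{allexpthm}: the same fine-scale non-jumping makes $f$ effectively continuous across the finest gaps, so within each column the approximate level set is forced to cross, and a conductivity-conservation argument as in Lemma \ref{approxlevelconductivity} builds a measure on $f^{-1}(r)$ of dimension $1$ via the Mass Distribution Principle (Theorem \ref{thMDP}). Either way, the delicate point — where I expect the real work to lie — is the passage from soft, perturbation-stable data (oscillation, or crossing at a fixed scale, all one controls for a generic H\"older function) to the hard metric conclusion (a positive-measure projection, equivalently a dimension-$1$ level set), and the indispensability of $\alpha>\tfrac12$ mirrors exactly the breakdown of the compression of Theorem \ref{sscdegenerate} at the critical exponent $\tfrac12$.
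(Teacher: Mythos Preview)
Your treatment of the subcritical regime and of the upper bound $D_*(\alpha,F)\le 1$ is correct and matches the paper. For the lower bound when $\alpha>\tfrac12$ you also hit on the same overall mechanism as the paper: a Fubini/projection argument reducing $D_*^g(F)\ge 1$ to the statement that, for a positive-measure set of fibers, the image of $g$ on that fiber (restricted to $C$) has positive Lebesgue measure.

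The gap is in your ``upgrade'' step. The implication
\[
\text{osc}_{y\in C}\,f(x,y)>0 \ \Longrightarrow\ \lambda\big(f(\{x\}\times C)\big)>0
\]
is simply false for $1$-H\"older-$\alpha$ functions with $\alpha>\tfrac12$. Take $g:C\to\R$ equal to $0$ on the left level-$1$ interval and $\varepsilon$ on the right one, with $\varepsilon\le r_1^\alpha$; then $g\in C_1^\alpha(C)$, $\mathrm{osc}(g)=\varepsilon>0$, and $\lambda(g(C))=0$. More generally, $g(y)=c\sum_k e_k(y)r_k^\alpha$ (with $e_k$ the left/right bit of $y$ at level $k$) is $1$-H\"older-$\alpha$ for small $c$, and for $\alpha>\tfrac12$ its range is a Cantor set of measure zero. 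So the open-and-dense condition you isolate (positive fibrewise oscillation) does not by itself force $D_*^f\ge 1$; your alternative conductivity sketch is too vague to close this gap and has no evident route to dimension $1$ here.

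What the paper does is exactly the quantitative refinement your outline is missing. One first fixes a scale $k$ at which
\[
\frac{\Lambda^\alpha(I_k\setminus C)}{|I_k|}<\delta
\]
(Lemma \ref{lemma:capacitylimit}, which is the precise form of your ``no jumping at fine scales'' observation and is where $\alpha>\tfrac12$ enters). Then one perturbs a given $c$-H\"older-$\alpha$ function by a Lipschitz map of slope $1-c$ in one coordinate, so that along a level-$k$ segment the oscillation is at least $(1-c)|I_k|$, i.e.\ \emph{linear} in the scale rather than merely positive. Extending any nearby $g$ to $[0,1]^2$ and applying the intermediate value theorem, the image of that segment covers an interval of length $\approx(1-c)|I_k|$; the image of the part outside $F$ has measure $\le\Lambda^\alpha(I_k\setminus C)<\delta|I_k|$, so the image of the part inside $F$ has measure $\ge(1-c-\delta)|I_k|-O(\delta')>0$. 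This is stable in a uniform ball around the perturbed function, giving a dense open set on which Fubini yields $D_*^g\ge 1$. The point you should retain is that the open condition has to encode not just ``some oscillation'' but ``oscillation of order $|I_k|$ at a scale $k$ chosen so that $\Lambda^\alpha(I_k\setminus C)=o(|I_k|)$''.
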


\begin{figure}[ht]
\includegraphics[width=0.55\textwidth]{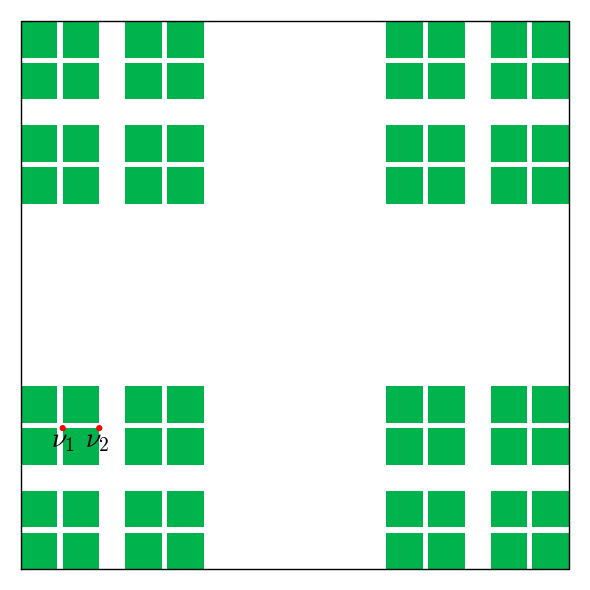}
\caption{Step $3$ of construction of $F$\label{figphase}}
\end{figure}

While the statement concerning small exponents will easily follow from Theorem \ref{sscdegenerate}, the other part is more technical. It requires a lemma, for which we will need the notion of Hausdorff capacity:

\begin{definition}
The $\alpha$ dimensional Hausdorff capacity of a set $E\subseteq \mathbb{R}^p$ is
$$\Lambda^{\alpha}(E)=\inf\left\{\sum_{i=1}^{\infty}|U_i|^\alpha: \text{ }E\subseteq \bigcup_{i=1}^{\infty}U_i\quad \text{ for some }(U_i)_{i=1}^{\infty}\right\}.$$
\end{definition}

The Hausdorff capacity is closely related to the problem we consider: it gives an upper estimate for the measure $\lambda(f(E))$ if $f$ is a 1-H\"older-$\alpha$ function.

%%%%%%%%%%%%%%%%%%%%%%%%%%%%%%%%
%%%%%%%%%%%%%%%%%%%%%%%%%%%%%%%%
%%%%%%%%%%%%%%%%%%%%%%%%%%%%%%%%

\begin{lemma} \label{lemma:capacitylimit}
Let $I_{k}$ be a maximal subinterval of $C_k$ and $\frac{1}{2}<\alpha\leq 1$. Then
$$\frac{\Lambda^{\alpha}(I_{k}\setminus C)}{|I_{k}|}\to 0,$$
as $k\to \infty$.
\end{lemma}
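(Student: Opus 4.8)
The plan is to bound the Hausdorff capacity from above by covering $I_k\setminus C$ with the removed open gaps themselves, and then to check that after dividing by $|I_k|$ the resulting geometric series decays to $0$ precisely because $\alpha>\tfrac12$. Note first that by the self-similar bookkeeping of the construction every maximal subinterval of $C_k$ produces the same configuration of gaps, so the choice of $I_k$ is immaterial and $|I_k|=l_k$.

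First I would record the explicit gap structure. Since $I_k\subseteq C_k$, the set $I_k\setminus C$ is exactly the union of all open intervals removed from $I_k$ at levels $n>k$. At level $n$ each maximal subinterval of $C_{n-1}$ has length $l_{n-1}$ and is split into two subintervals of length $l_n$, so the removed gap has length
$$g_n = l_{n-1}-2l_n = \frac{1}{2^n-1}-\frac{2}{2^{n+1}-1} = \frac{1}{(2^n-1)(2^{n+1}-1)}.$$
Inside $I_k$ there are exactly $2^{\,n-k-1}$ such gaps at level $n$, one for each of the $2^{\,n-k-1}$ maximal subintervals of $C_{n-1}$ contained in $I_k$.

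Next, using these gaps as a cover and the elementary bound $(2^n-1)(2^{n+1}-1)\geq 2^{\,2n-1}$, I would estimate
$$\Lambda^{\alpha}(I_k\setminus C)\leq \sum_{n=k+1}^{\infty} 2^{\,n-k-1}\,g_n^{\alpha}\leq 2^{\,\alpha-k-1}\sum_{n=k+1}^{\infty}2^{\,n(1-2\alpha)}.$$
Because $\alpha>\tfrac12$ we have $1-2\alpha<0$, so the geometric series converges and is comparable to its first term $2^{\,(k+1)(1-2\alpha)}$. Collecting the powers of $2$ then yields a bound of the form $\Lambda^{\alpha}(I_k\setminus C)\leq c_{\alpha}\,2^{-2\alpha k}$ with a constant $c_{\alpha}=\tfrac{2^{-\alpha}}{1-2^{1-2\alpha}}$ depending only on $\alpha$.

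Finally, since $|I_k|=l_k\geq 2^{-(k+1)}$, dividing gives
$$\frac{\Lambda^{\alpha}(I_k\setminus C)}{|I_k|}\leq 2c_{\alpha}\,2^{\,k(1-2\alpha)}\xrightarrow[k\to\infty]{}0,$$
again using $1-2\alpha<0$, which completes the argument. The only genuine point—and the place where the hypothesis is used essentially—is the comparison of exponents: the number of gaps grows like $2^n$ while their $\alpha$-th powers shrink like $2^{-2\alpha n}$, so both convergence of the series and decay of the normalized quantity hinge on $2\alpha>1$. I do not expect a real obstacle beyond this bookkeeping; in particular, covering by the individual gaps is already efficient enough, so no cleverer cover is required.
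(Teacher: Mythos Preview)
Your proof is correct and follows essentially the same approach as the paper: cover $I_k\setminus C$ by the removed gaps, bound each gap length by a power of $2^{-2n}$, sum the resulting geometric series (which converges exactly when $\alpha>\tfrac12$), and divide by $|I_k|\geq 2^{-(k+1)}$. The only cosmetic difference is the constant in the gap bound (you use $(2^n-1)(2^{n+1}-1)\geq 2^{2n-1}$ while the paper uses $>2^{2n}$ for $m>2$), which does not affect the argument.
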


\begin{proof}
Let $k\geq{1}$. 
By construction, $|I_{k}|=\frac{1}{2^{k+1}-1}>\frac{1}{2^{k+1}}$. 
We also know that the length $r_m$ of an interval removed from $C_{m-1}$ to obtain $C_{m}$ can be estimated from above by
\begin{equation}\label{removal_above_estimate}
r_m = l_{m-1} - 2l_m = \frac{1}{2^m-1}-\frac{2}{2^{m+1}-1}=\frac{1}{(2^m -1)(2^{m+1}-1)}<\frac{1}{2^{2m}}\end{equation}
for $m>2$. 
Now cover the set $I_{k}\setminus C$ by intervals contiguous to $C$ in $I_{k}$. 
It is easy to see that this covering consists of intervals of length $r_m$ for some $m>k$, and the number of intervals with length $r_m$ is $2^{m-k-1}$. 
Consequently, 
\begin{equation}
\Lambda^{\alpha}(I_{k}\setminus C)\leq \sum_{m=k+1}^{\infty}2^{m-k-1}r_m^{\alpha}\leq \sum_{m=k+1}^{\infty} 2^{m-k-1-2m\alpha},
\end{equation}
where we use (\ref{removal_above_estimate}) for the second estimate. 
The geometric series is summable for $\frac{1}{2}<\alpha\leq 1$, and it yields
\begin{equation}
\Lambda^{\alpha}(I_{k}\setminus C)\leq 2^{-k-1} \frac{2^{(k+1)(1-2\alpha)}}{1-2^{1-2\alpha}}=\frac{2^{-2\alpha(k+1)}}{1-2^{1-2\alpha}}.
\end{equation}
Consequently, for $\frac{1}{2}<\alpha\leq 1$
\begin{equation}
 \frac{\Lambda^{\alpha}(I_k\setminus C)}{|I_k|} \leq \frac{ 2^{(k+1)(1-2\alpha)}}{1-2^{1-2\alpha}}\to 0, \text{ as $k\to \infty$, } 
\end{equation}
which concludes the proof. \end{proof}

\begin{proof}[Proof of Theorem \ref{phasetransfatcantor}]
The first statement about $0<\alpha<\frac{1}{2}$ simply follows from Theorem \ref{sscdegenerate}, as $F$ has a $\left(\frac{1}{2}, \frac{1}{4}\right)$ separated structure. This observation follows easily from calculations carried out in the proof of 
Lemma \ref{lemma:capacitylimit}: notably, if $\mathcal{S}_k$ consists of the sets of the form $F\cap (I_j\times I_{j'})$, where $I_j$ and $I_{j'}$ are (not necessarily different) maximal subintervals of $C_k$ then each element of $\mathcal{S}_k$ has diameter 
$$\sqrt{2}\cdot \frac{1}{2^{k+1}-1}\leq \sqrt{2} \cdot 2^{-k}.$$ Moreover, for $k\geq{2}$ one can easily deduce that  the  distance between different elements of $\mathcal{S}_k$ is at least 
$$\frac{1}{(2^k -1)(2^{k+1}-1)}\geq \frac{1}{4} 4^{-k},$$
using the first part of \eqref{removal_above_estimate} and the fact that as the elements of $\mathcal{S}_k$ are product sets, they differ in one of their factors. It verifies that $F$ has a $\left(\frac{1}{2}, \frac{1}{4}\right)$ separated structure, and yields the first part of the theorem due to Theorem \ref{sscdegenerate} and
$$\frac{\log\frac{1}{2}}{\log\frac{1}{4}}=\frac{1}{2}.$$

For the second statement, 
by Theorem \ref{thm:trivial_upper_bound} we have $D_{*}(\alpha, F)\leq 2- 1=1$
and hence it is sufficient to show that $D_{*}(\alpha, F)\geq 1 $ holds for 
$\frac{1}{2}<\aaa\leq 1$.

Recall that the union of all the $c$-H\"older-$\alpha$ functions for $0<c<1$ defined on $F$ is a dense subset of 1-H\"older-$\alpha$ functions in the supremum norm. 
Consequently it would be sufficient to verify that for a fixed $c$-H\"older-$\alpha$ function $f$ and $\varepsilon>0$ we can find a 1-H\"older-$\alpha$ function $\tilde{f}\in B(f,\varepsilon)$ and $\varepsilon'>0$ such that for any 1-H\"older-$\alpha$ function $g\in B(\tilde{f},\varepsilon')$  we have $\dim_H(g^{-1}(r))\geq{1}$ in a set of positive measure  of $r$s.  
In fact, it would verify that $D_{*}^{g}(F)=1$ on a dense open set, which clearly yields that it is the generic behaviour.

As $f$ is $c$-H\"older-$\alpha$, $f+h$ is a 1-H\"older-$\alpha$ function if $h$ is $(1-c)$-H\"older-$\alpha$. 
We will use this property to introduce the perturbed function $\tilde{f}$, for which some $k$th level cylinder  of $C\times C$  (which is a square) has adjacent vertices $v_1=(x_1,y_1), v_2=(x_2,y_1)$ such that
\begin{equation} \label{large_change}
|\tilde{f}(v_1)-\tilde{f}(v_2)|\geq (1-c)|v_1-v_2|=(1-c)|x_1-x_2|.
\end{equation}
More explicitly, choose $k$ large enough such that for a maximal subinterval $I=[x_1,x_2]\subseteq C_k$ we have
\begin{equation} \label{capacitybound}
\frac{\Lambda^{\alpha}(I\setminus C)}{|x_1-x_2|}<\delta,
\end{equation}
where $\delta$ is to be fixed later. 
By Lemma \ref{lemma:capacitylimit}, this estimate holds for large enough $k$. 
We can assume without loss of generality that $f(v_1)\leq f(v_2)$ for the vertices $v_1=(x_1,y_1), v_2=(x_2,y_1)$ of some $k$th level cylinder  of $C\times C$ , as the other case is similar. 
We can also assume that these vertices are top vertices of that $k$th level cylinder
 see Figure \ref{figphase}. 
Hence if we define
\begin{equation} h(x,y) = 
	\begin{cases*}
      0, & if $x<x_1$, \\
      (1-c)(x-x_1), & if $x_1\leq x \leq x_2$, \\
      (1-c)(x_2-x_1) & otherwise,
    \end{cases*}
\end{equation}
then $\tilde{f}=f+h$ satisfies (\ref{large_change}).

We take a $\delta'>0$ which will be specified later.
By continuity, we can choose $r$ such that for any $y\in [y_1 -r, y_1]\cap C$ we have 
$$|\tilde{f}(x_1,y_1)-\tilde{f}(x_1,y)|<\delta' \text{ and } |\tilde{f}(x_2,y_1)-\tilde{f}(x_2,y)|<\delta'.$$
Consequently, if $g \in B(\tilde{f}, \delta')$, then 
$$|g(x_1,y_1)-g(x_1,y)|<3\delta'%$$
\text{ and }
%$$
|g(x_2,y_1)-g(x_2,y)|<3\delta'.$$
Besides that, as $(x_1,y_1), (x_2,y_1)$ were chosen as top vertices of cylinders  of $C\times C$,  
$$\lambda(C\cap[y_1 - r, y_1])=:\eta>0.$$
Now by Theorem 1 of \cite{[GrunbHolderext]} we can extend $g$ to a 1-H\"older-$\alpha$ function defined on $[0,1]^2$. 
Denote the extended function by $g$ as well. 
Due to the choice of $r$, the continuity of the extended function, and the intermediate value theorem, we have that the $g$-image of the planar line segment $[(x_1,y),(x_2,y)]$  for any $y\in [y_1 -r, y_1]\cap C$ contains the interval $[\tilde{f}(v_1)+3\delta', \tilde{f}(v_2)-3\delta']$. 
This interval has measure at least $(1-c)(x_2-x_1)-6\delta'$. 
Moreover, as $[(x_1,y),(x_2,y)]\setminus F \subseteq [(x_1,y),(x_2,y)]$ is congruent to $I\setminus C \subseteq I$, due to (\ref{capacitybound}) and the fact that $g$ is 1-H\"older-$\alpha$, we have
$$\lambda(g([(x_1,y),(x_2,y)]\setminus F))\leq \delta(x_2-x_1).$$
Consequently, the remainder measure of values is taken on $[(x_1,y),(x_2,y)]\cap F$, yielding that  $g([(x_1,y),(x_2,y)]\cap F)\cap [\tilde{f}(v_1)+3\delta', \tilde{f}(v_2)-3\delta']$  has measure at least $(1-c-\delta)(x_2-x_1)-6\delta'$. 
Fix now the values of $\delta$ and $\delta'$ such that this quantity is positive. 

By the above calculations, we can conclude that we have
\begin{equation} \label{eq:Fubini}
\begin{split} \lambda_2\left\{(g(x,y),y): g(x,y)\in [\tilde{f}(v_1)+3\delta', \tilde{f}(v_2)-3\delta'], \text{ } y\in [y_1-r,  y_1]\cap C, \text{ } x\in [x_1, x_2]\cap C\right\}& \\ \geq \eta\cdot((1-c-\delta)(x_2-x_1)-6\delta')>0, \end{split}
\end{equation}
where $\lambda_2$ denotes the two-dimensional Lebesgue measure. Note that this set is measurable indeed as it is the image of the compact set
$$([x_1, x_2]\cap C) \times ([y_1-r,  y_1]\cap C),$$
under the continuous mapping $(x,y)\to (g(x,y), y)$.
However, by Fubini's theorem, we can rewrite the measure in \eqref{eq:Fubini} as
\begin{equation}
\int_{t=\tilde{f}(v_1)+4\delta'}^{\tilde{f}(v_2)-4\delta'}\lambda\left\{y: y\in [y_1-r,  y_1]\cap C \text{ and } g(x,y)=t \text{ for an } x\in[x_1,x_2]\cap C\right\}dt.
\end{equation}
As this integral is positive, the integrand is positive on a set $A$ of positive measure. 
That is, for any $t\in A$ we have that 
$$\lambda\left\{y: y\in [y_1-r,  y_1]\cap C \text{ and } g(x,y)=t \text{ for an } x\in[x_1,x_2]\cap C\right\}>0.$$
which is equivalent to that the projection of
$$g^{-1}(t) \cap ([x_1,x_2]\times[y_1-r,  y_1])\cap F$$
to the second coordinate has positive measure. 
That is, the projection has Hausdorff dimension 1, which obviously yields that $g^{-1}(t) \cap ([x_1,x_2]\times[y_1-r,  y_1]) \cap F$ has Hausdorff dimension at least 1 as well for a set of $t$s with positive measure. 
It concludes the proof. \end{proof}

\section{Conclusions}\label{*secconc}

Considering Hausdorff dimensions of level sets of generic $1$-H\"older-$\aaa$ functions we further refined our concepts which lead to the definition of topological Hausdorff dimension
introduced in \cite{BBEtoph}.
This paper follows \cite{sier} which is  a more theoretical paper. 

In the current paper by explicit examples and calculations
we illustrated why these concepts are related to  "thickness/narrow cross-sections'' of a ‘‘network’’ corresponding to a fractal set.

We gave detailed calculations for the Sierpi\'nski triangle, $\DDD$.
To obtain a lower estimate in this case for the Hausdorff dimension of almost every level-set  of {\it any 1-H\"older-$\alpha$ function } defined on $\DDD$
we used a concept of conductivity of some subtriangles. 

For the Hausdorff dimension 
of almost every level-set of  {\it a generic 1-H\"older-$\aaa$ function } defined on $\DDD$ we also calculated an upper estimate. 

Apart from the Sierpi\'nski triangle, fractals with strong separation condition,
 and ones with a certain separated structure were also considered. An example based on these latter ones illustrates
a phenomenon which can be regarded as phase transition in our estimates.  
 This  means that for smaller values of the Hölder exponent $\alpha$ level sets of 
generic 1-H\"older-$\alpha$ functions are as flexible/compressible as those
of a continuous function. 
 While for larger values of $\alpha$ the geometry of the fractal also matters.
 In a very rough heuristic way one could say that if there is a phase transition then
for small values of $\aaa$ the ``traffic'' corresponding to the level sets is not heavy enough to generate ``traffic jams'' and can go through the ``narrowest'' places,
while for larger $\aaa$s ``traffic jams'' show up and ``thicker'' parts of the fractal
should be used to ``accommodate'' the level sets.

This work can also initiate similar calculations and estimations on many other fractals.

%%%%%%%%%%%%%%%%%%%%%%%%%%%%%%%%
%%%%%%%%%%%%%%%%%%%%%%%%%%%%%%%%
%%%%%%%%%%%%%%%%%%%%%%%%%%%%%%%%
%%%%%%%%%%%%%%%%%%%%%%%%%%%%%%%%
%%%%%%%%%%%%%%%%%%%%%%%%%%%%%%%%
%%%%%%%%%%%%%%%%%%%%%%%%%%%%%%%%

%\begin{thebibliography}{99}

%\bibitem{[GrunbHolderext]} 
%%{\sc F. Gr\"unbaum, E. H. Zarantonello}, {\em On the extension of uniformly continuous mappings}, Michigan Math. 
%J. 15 (1968), pp.~65--74.

%\end{thebibliography}

\bibliographystyle{amsplain} 
\bibliography{sier}

\end{document}